\DeclareSymbolFont{cyrletters}{OT2}{wncyr}{m}{n}
\DeclareMathSymbol{\Sha}{\mathalpha}{cyrletters}{"58}
\newtheorem{proposition}{Proposition}
\newtheorem{theorem}{Theorem}
\newtheorem{lemma}{Lemma}
\newtheorem{corollary}{Corollary}
\newtheorem{definition}{Definition}
\newcommand{\C}{ \mathbb{C}}
\newcommand{\R}{ \mathbb{R}}
\newcommand{\T}{ \mathbb{T}}
\newcommand{\Z}{ \mathbb{Z}}
\newcommand{\N}{ \mathbb{N}}
\newcommand{\Lop}{{\rm L}}
\newcommand{\Kop}{{\rm L^{\dagger}}}
\newcommand{\Dop}{{\rm D}}
\newcommand{\Id}{\mathrm{Id}}
\newcommand{\Span}{\mathrm{Span}}
\newcommand{\Sp}{ \mathcal{S}}
\newcommand{\ek}{{e}_{\bm{k}}}
\newcommand{\ML}{\mathcal{M}_{\Lop}}
\newcommand{\KL}{K_{\Lop}}
\newcommand{\CL}{\mathcal{C}_{\Lop}}
\newcommand{\NL}{\mathcal{N}_{\Lop}}
\newcommand{\Proj}{\mathrm{Proj}}
\def\One{\mathbbm{1}}
\def\mytitle{TV-based Reconstruction of  Periodic Functions}
\begin{document}

\title{\mytitle}
\author{Julien Fageot and Matthieu Simeoni}

\maketitle

\begin{abstract}
We introduce a general framework for the reconstruction of periodic multivariate functions from finitely many and possibly noisy linear measurements.
The reconstruction task is formulated as a penalized convex optimization problem, taking the form of a sum between a convex data fidelity functional and a sparsity-promoting total variation based penalty involving a suitable spline-admissible regularizing operator $\Lop$.
In this context, we establish a periodic representer theorem, showing that the extreme-point solutions are periodic $\Lop$-splines with less knots than the number of measurements.
The main results are specified for the broadest classes of measurement functionals, spline-admissible operators, and convex data fidelity functionals.
We exemplify our results for various regularization operators and  measurement types (\emph{e.g.}, spatial sampling, Fourier sampling, or square-integrable functions). We also consider the reconstruction of both univariate and multivariate periodic functions.
\end{abstract}

\textit{Keywords.} Periodic operators, splines, total variation norm, optimization on measure spaces, representer theorems, native spaces.
 
\section{Introduction}\label{sec:intro}
	\subsection{Total Variation Regularization for Periodic Inverse Problems} \label{sec:problem}

	The development of optimization-based methods for the reconstruction of functions from finitely many linear  measurements has been an important subject of recent investigation. This paper participates to this effort by considering the special case of periodic functions defined over the $d$-dimensional torus. More specifically, our goal is to recover an unknown periodic function $f_0$ from finitely many observations $y_1, \ldots , y_M \in \R$ of the latter.
	The real function $f_{0}$ is defined over the torus $\T^d = \R^d / 2 \pi \Z^d$ with ambiant dimension $d\geq 1$.  
	The $M$ observations $y_1, \ldots , y_M$ are possibly noise-corrupted versions of  noiseless measurements  depending linearly on $f_{0}$. This linear relationship can be modelled in terms of $M$ linear measurement functionals $f \mapsto \langle \nu_m ,f \rangle \in \R$, $m=1,\ldots,M$.
	 Note that since the unknown function $f_0$ is infinite-dimensional and  the data finite-dimensional, the reconstruction task is dramatically ill-posed and must therefore be regularized. This can be achieved by considering a convex penalized optimization problem of the form 
	\begin{equation} \label{eq:optipb}
		\tilde{f}\in\arg\min_f E( \bm{y} , \bm{\nu}(f)) + \lambda \lVert \Lop f \rVert_{\mathcal{M}},
	\end{equation}
	where:
	\begin{itemize}
	\item $E$ is a suitable convex cost functional encouraging  the measurement vector $\bm{\nu}(f) = (\langle \nu_m , f \rangle)_{m=1\ldots M}$ to be close to the observation vector $\bm{y} = (y_1, \ldots , y_M)$. 
	\item $\Lop$ is a suitable regularizing operator acting on periodic functions. 
	\item $\lVert \cdot \rVert_{\mathcal{M}}$ is the total variation (TV) norm of a periodic Radon measure.
	\item $\lambda$ is a positive constant defining the penalty strength.
\end{itemize}
	The penalty term $ \lambda \lVert \Lop f \rVert_{\mathcal{M}}$ in \eqref{eq:optipb} helps regularizing the ill-posed reconstruction problem. Moreover, the total variation norm is known to promote sparse spline-like solutions~\cite{Unser2017splines}, similarly to the $\ell_1$ norm in finite dimension.
	
	\subsection{Comparison with Previous Works} \label{sec:comparison}
	
		\paragraph{Discrete $\ell_1$ reconstruction methods.}
	The problem of reconstructing an unknown physical quantity, or signal, from incomplete measurements has a rich history in applied mathematics, dealing both with discrete and continuous settings. In the former, the signal is modeled as a vector (finite dimensional setting) or a sequence (infinite dimensional discrete setting). The problem is then ill-posed in the sense that we do not have enough measurements to  uniquely characterize  the unknown vector (underdetermined system of equation). Regularization methods have been introduced in order to make the problem well-posed, starting with the Tikhonov regularization based on the $\ell_2$-norm~\cite{Tikhonov1963solution}, also known as ridge regression in statistics~\cite{Hoerl1962ridge}.
	In the 90's, it has been recognized that $\ell_1$ regularizations  are much better for providing sparse and interpretable reconstructions, leading to the LASSO~\cite{tibshirani1996regression} and the basis pursuit~\cite{chen2001atomic} and then inspiring the field of compressed sensing~\cite{Donoho2006,Candes2006sparse,chandrasekaran2012convex,eldar2012compressed,Foucart2013mathematical}.
	These ideas have been initially developed in finite dimension, and have been extended to infinite-dimensional settings by several authors~\cite{Adcock2015generalized,Adcock2017breaking,eldar2008compressed,unser2016representer,Traonmilin2017compressed,Bodmann2018compressed,marz2020sampling}.
	
		\paragraph{Dirac recovery and optimization over measure spaces.}

	Many physical quantities are not adequately described using the discrete formalism introduced above. This is typically the case for applications where the physical phenomenon of interest involves point sources lying in a continuum, often  modeled as Dirac streams $w_0 = \sum a_k \delta_{\bm{x}_k}$, \emph{i.e.} weighted sums of Dirac impulses. Such generalized functions are characterized by a finite rate of innovation, which is a continuous-domain generalization of the classically discrete notion of sparsity~ \cite{Vetterli2002FRI,Maravic2005sampling}. A key problem is then to reconstruct the unknown Dirac stream  from finitely many observations.
	
	In \cite{candes2014towards,Candes2013super}, Candès and Fernandez-Granda considered the super-resolution problem, aiming at recovering $w_0$ from low-pass Fourier measurements. Remarkably, they showed that it is possible to reconstruct \emph{perfectly} the infinite-dimensional measure from finitely many Fourier measurements as soon as the Dirac impulses are sufficiently far apart.
  	In this framework, the reconstructed Dirac stream is the solution of an optimization task over Radon measures, where the TV norm is used as a regularization, which corresponds to \eqref{eq:optipb} with $\mathrm{L} = \mathrm{Id}$.  Optimization problems over measure spaces can be traced back to the 20th century~\cite{zukhovitskii1956approximation,Fisher1975}, and have been the topic of many works  in the recent years~\cite{deCastro2012exact,Bredies2013inverse,Duval2015exact,Azais2015Spike,FernandezGranda2016Super,Chambolle2016geometric,duval2017sparseI,duval2017sparseII,Denoyelle2017support,flinth2020linear,chi2020harnessing,garcia2020approximate}. 
	These include algorithmic schemes adapted to Dirac recovery, with recent applications to super-resolution microscopy~\cite{denoyelle2019sliding} and cloud tracking~\cite{courbot2019sparse}. 
		
		\paragraph{TV beyond measures: the splines realm.}
		Although relevant in some applications, Dirac stream recovery remains quite specific. In particular, it cannot be applied to physical quantities admitting pointwise evalutations. Generalizations of the framework to the reconstruction of such objects started with~\cite{Unser2017splines}, with already some roots in~\cite{Fisher1975}. By adding a pseudo-differential operator to the regularization term, similarly to \eqref{eq:optipb}, this new reconstruction paradigm was able to reconstruct smooth solutions while maintaining the sparsity-promoting effect of the TV norm. In this setting, the Dirac streams are replaced by splines in the solution set. Splines are piecewise-smooth functions with  finite rates of innovation, whose  smoothness can be adapted by adequately choosing the differential operator~\cite{Schoenberg1973cardinal,Unser2017splines}.
		Since then, algorithmic schemes have been developed~\cite{gupta2018continuous,flinth2019exact,Debarre2019,debarre2020sparsest}, and important extensions have been proposed, generalizing the framework to other Banach spaces such as measure spaces over spherical domains~\cite{simeoni2020functionalpaper,simeoni2020functional}, hybrid spaces~\cite{debarre2019hybrid}, multivariate settings~\cite{aziznejad2018l1}, or more general abstract settings~\cite{bredies2018sparsity,boyer2019representer,unser2019native}. Applications include geophysical and astronomical data reconstruction~\cite{simeoni2020functionalpaper,simeoni2020functional}, neural networks~\cite{unser2019representer,aziznejad2020deep}, and image analysis~\cite{novosadova2018image}. 		
		
		\paragraph{Optimization in periodic function spaces.}
		
		Several works for Dirac recovery have been developed over the torus, and are therefore tailored for periodic Dirac streams~\cite{Vetterli2002FRI,Fisher1975,deCastro2012exact,candes2014towards,Duval2015exact,Denoyelle2017support, simeoni2020cpgd}.  Contrarily to the non periodic setting, the extension to arbitrary periodic functions has only received a limited attention so far, mostly in the works of Simeoni~\cite{simeoni2020functionalpaper,simeoni2020functional}. There, the author considers functions over the $d$-dimensional sphere $\mathbb{S}^d$, which coincides with the univariate periodic case for $d=1$. The proposed reconstruction framework is however limited to invertible regularization operators, hence excluding important standard differential operators. 
		There are strong motivations to develop a periodic framework. Periodic reconstruction methods can be used for the parametric representation of closed curves~\cite{Delgado2012ellipse,Uhlmann2016hermite} or the interpolation of periodic functions~\cite{light1992interpolation,Jacob2002sampling}. They can also be used to model 2D acoustical and electromagnetic wave fields,  encountered in the field array signal processing \cite{simeoni2019deepwave,pan2017frida, krim1996two}.
		Inverse problems for periodic functions have been considered with Tikhonov $L_2$-regularizations, a complete treatment being proposed in~\cite{Badou}. The present paper can be seen as the TV adaptation of this work.
	
	\paragraph{Generalized measurements.}

		One specificity of the functional setting in comparison with its discrete counterpart is that the measurement process, modelled via measurement functionals $\nu_m$, deserves a special attention.
		For infinite-dimensional optimization problems, the space of linear measurement functionals is indeed intimately linked to the search space of the optimization problem. 
		For instance, the search space for non periodic Dirac recovery is  the space of Radon measures, which can  only be sensed by continuous functions vanishing at infinity~\cite{Fisher1975,Unser2017splines}, possibly with additional smoothness technical conditions for specific tasks such as support recovery~\cite{Duval2015exact}.
		
		This problem has been addressed in various ways for generalizations of the Dirac recovery problem.
		The main motivation is to determine whether some practical measurement procedures are adapted to the considered optimization task. This includes spatial sampling or Fourier sampling, that were considered for instance in~\cite{Debarre2019,gupta2018continuous}, and that we shall also consider.  
		Most of the time, theoretical works provide sufficient conditions on the measurement functionals so that the optimization problem is well-posed and admits well-characterized solutions via representer theorems~\cite{Unser2017splines}. However, to the best of our knowledge, the only framework providing necessary and sufficient conditions over the measurement functionals to achieve this goal is proposed in the non periodic setting~\cite{unser2019native}. As we shall see, our present work provides complete answers to such questions in the periodic setting.	
	
	\subsection{Contributions and Outline} \label{sec:contributions}

	This paper introduces a very generic total variation-based optimization framework for the reconstruction of \emph{periodic} functions. Our main contributions are the following:
	
	\begin{itemize}
	
	\item We provide a rigorous and exhaustive functional-analytic framework for optimization problems of the form~\eqref{eq:optipb}. This requires (i) to identify the Banach space, called the \emph{native space}, on which the optimization problem is well-posed, and (ii) to characterize the space of the linear measurement functionals $\nu_m$, called the \emph{measurement space}, such that the measurement process $f \mapsto \langle \nu_m , f \rangle$ is well-defined and has relevant topological properties. To the best of our knowledge, our work is the first to date to provide a definitive answer to both questions for arbitrary spline-admissible   operators $\Lop$.
	
	\item We demonstrate a representer theorem for the extreme-point solutions of the optimization problem \eqref{eq:optipb}. The latter are periodic $\Lop$-splines whose number of knots is smaller or equal to the number of measurements $M$. This is the periodic counterpart of recent representer theorems obtained in non periodic settings \cite{Unser2017splines,boyer2019representer,bredies2018sparsity}. Our representer theorem is moreover not directly deducible from these results.
	
	\item We give necessary and sufficient conditions for the admissibility of several measurement procedures often used in practice to sense the unknown function $f_0$. These conditions  only depend  on the properties of the regularizing operator $\Lop$. We consider notably the case of spatial sampling, Fourier sampling, and square-integrable filtering.
	
	\item We exemplify our general framework on various classes of pseudo-differential operators and their corresponding periodic splines. This includes classical differential operators such as the derivative and the Laplacian, together with polynomial or fractional generalizations of the latter. We moreover introduce other operators whose splines, called Mat\'ern and Wendland splines,  are  characterized by their excellent localization properties.
	All the results, including the sampling-admissibility, are examplified on these operators.
	To the best of our knowledge, this is the first characterization of the classical pseudo-differential operators that are sampling-admissible.
	In addition, we provide Python routines for efficiently generating and manipulating the various multivariate periodic $\Lop$-splines considered in this paper on the public GitHub repository \cite{periodispline2020}. We give illustrative examples in dimension $d=1,2,3$, prevailing in practice. 
		
	\end{itemize}

	The paper is organized as follows.
	In Section \ref{sec:opandsplines}, we introduce the class of spline-admissible periodic operators and their corresponding $\Lop$-splines.
	The native space and the measurement space of the optimization problem~\eqref{eq:optipb} are constructed in Section \ref{sec:constructionNativeSpace}. 
	The periodic representer theorem associated to the optimization problem \eqref{eq:optipb} is derived in Section \ref{sec:RT}.
	Examples of admissible operators and of linear functionals are given in Sections \ref{sec:differentialop} and \ref{sec:criteria} respectively. 
	Finally, we conclude in Section \ref{sec:discussion}.

\section{Periodic Functions, Operators, and Splines} \label{sec:opandsplines}

	This section is dedicated to the introduction of periodic spline-admissible operators. We first provide some definitions and results on functions, operators, and splines in the periodic setting.

	\subsection{Periodic  Function Spaces and Generalized Fourier Series}\label{sec:vocabulary}
	
	\paragraph{Generalized Periodic Functions.} The \emph{Schwartz space} of \emph{infinitely differentiable} periodic functions is denoted by $\Sp(\T^d)$. It is endowed with its usual nuclear Fr\'echet topology~\cite{Treves1967} associated to the familly of norms
	\begin{equation}
		\lVert \varphi \rVert_{N} := \left( \sum_{0 \leq \lvert \bm{n} \rvert \leq N} \lVert \mathrm{D}^{\bm{n}} \{\varphi\} \rVert_{2}^2 \right)^{1/2}, \qquad \forall \varphi \in \Sp(\T^d),
	\end{equation}	
	where $N \in \N$,  $\bm{n} := (n_1,\ldots, n_d) \in \N^d$, $\lvert \bm{n} \rvert := n_1 + \cdots + n_d$, and $\mathrm{D}^{\bm{n}} := \Pi_{i=1}^d \partial_i^{n_i}$.
	The topological dual of $\Sp(\T^d)$, denoted by $\Sp'(\T^d)$, is the space of continuous linear functionals $f:\Sp(\T^d)\rightarrow\R$, called \emph{generalized periodic functions}. The bilinear map $\langle\cdot,\cdot\rangle:\Sp(\T^d) \times \Sp'(\T^d) \rightarrow \R, \,(\varphi,f)\mapsto\langle f,\varphi\rangle:=f(\varphi)$ is called the  \emph{Schwartz duality product}. The inner product notation is not fortuitous since when $f \in L_1(\T^d)$, we have  $\langle f, \varphi \rangle = \frac{1}{(2\pi)^d}\int_{\T^d} f(\bm{x}) \varphi(\bm{x}) \mathrm{d}\bm{x}.$
The space $\Sp'(\T^d)$ can be endowed with the \emph{weak* topology}, induced by the family of semi-norms $\{\|f\|_\varphi:=|\langle f,\varphi\rangle|, \varphi\in\Sp(\T^d)\}$. This is the topology of \emph{pointwise convergence}: a sequence of generalized periodic functions $f_n \in \Sp'(\T^d)$ converges to $f\in \Sp'(\T^d)$ if $\lim_{n\rightarrow\infty}\langle f_n , \varphi \rangle=\langle f, \varphi \rangle$ for any test function $\varphi \in \Sp(\T^d)$.
	
\paragraph{Generalized Fourier Series.} 			For $\bm{k}\in \Z^d$, we define the sinusoid function  $e_{\bm{k}} : \bm{x} \mapsto \mathrm{e}^{  \mathrm{i}  \langle \bm{x},\bm{k}\rangle}, $	which is trivially in $\Sp(\T^d)$. Any generalized periodic function $f \in \Sp'(\T^d)$ can then be uniquely decomposed as $f = \sum_{\bm{k}\in\Z^d} \widehat{f}[\bm{k}] e_{\bm{k}}$,
	where the convergence of the series holds in $\Sp'(\T^d)$.
	The sequence $(\widehat{f}[\bm{k}])_{\bm{k}\in \Z^d} \in \C^{\Z^d}$, called the \emph{Fourier sequence}  of $f$, is   \emph{slowly growing}~\cite[Chapter VII]{Schwartz1966distributions}---\emph{i.e.}, such that $|\widehat{f} [\bm{k}]|=\mathcal{O}(\|\bm{k}\|^n)$ for some $n\in\N$. The \emph{Fourier coefficients} are given by $\widehat{f} [\bm{k}] := \langle f , e_{\bm{k}} \rangle$ for any $f \in \Sp'(\T^d)$. A periodic generalized function $\varphi \in \Sp'(\T^d)$ is in $\Sp(\T^d)$ if and only if its Fourier sequence is \emph{rapidly decaying}---\emph{i.e.},   $|\widehat{\varphi} [\bm{k}]|=o(\|\bm{k}\|^{-n}),\, \forall n\in\N$. 
	
\paragraph{Dirac Comb.} 	The \emph{Dirac comb} $\Sha\in\Sp'(\T^d)$ is characterized by the relation $\langle \Sha , \varphi \rangle := \varphi(0)$ for any $\varphi \in \Sp(\T^d)$. The Fourier sequence of $\Sha$ is hence $\widehat{\Sha} [\bm{k}] = \langle \Sha , e_{\bm{k}} \rangle = e_{\bm{k}}(0) = 1$, yielding $\Sha = \sum_{\bm{k}\in\Z^d} e_{\bm{k}}$. Seen as a generalized function over $\R^d$, we recover the usual definition of the Dirac comb, \emph{i.e.}, $\Sha = \sum_{\bm{n} \in \Z^d} \delta( \cdot - 2\pi\bm{n})$, with $\delta$ the Dirac impulse.

\paragraph{Periodic Sobolev Spaces.} 	The \emph{periodic Sobolev space of smoothness $\tau \in \R$} is defined by
	\begin{equation} \label{eq:sobolevspace}
		\mathcal{H}_{2}^{\tau}(\T^d) := \left\{  f \in \Sp'(\T^d) , \ \lVert f \rVert_{\mathcal{H}_2^\tau}^2 := \sum_{\bm{k}\in \Z^d} ( 1 + \lVert \bm{k} \rVert^2)^{\tau} \lvert \widehat{f} [\bm{k}] \rvert^2 < \infty  \right\}.
	\end{equation}
	It is a Hilbert space for the Hilbertian norm $\lVert \cdot \rVert_{\mathcal{H}_2^\tau}$. Moreover, we have for any $\tau_1, \tau_2 \in \R$ with $\tau_1 \leq \tau_2$ the dense  topological embeddings
	\begin{equation}
		\Sp(\T^d) = \cap_{\tau \in \R} \mathcal{H}_{2}^{\tau}(\T^d)  \subseteq \mathcal{H}_{2}^{\tau_2}(\T^d) \subseteq \mathcal{H}_{2}^{\tau_1}(\T^d) \subseteq \cup_{\tau \in \R} \mathcal{H}_{2}^{\tau}(\T^d)  = \Sp'(\T^d). 
	\end{equation}

	\subsection{Periodic Spline-Admissible Operators and their Periodic Green's Function}
		
	We denote by $\mathcal{L}_{\mathrm{SI}} (\Sp'(\T^d))$ the space of \emph{linear}, \emph{shift-invariant} operators that are \emph{continuous} from $\Sp'(\T^d)$ to itself.
	The following characterization in terms of Fourier sequences is well-known \cite[Section II.A]{Badou}.
	
\begin{proposition} \label{prop:firsttrivialstuff}
	Let $\Lop \in \mathcal{L}_{\mathrm{SI}} (\Sp'(\T^d))$. Then, the $e_{\bm{k}}$ are eigenfunctions of $\Lop$ and the sequence of eigenvalues $(\widehat{L}[\bm{k}])_{\bm{k}\in\Z^d}\subset \C$ such that $\Lop e_{\bm{k}} = \widehat{L}[\bm{k}] e_{\bm{k}}$ is slowly growing. Moreover, we have that, for any $f \in \Sp'(\T^d)$, 
	\begin{equation} \label{eq:Lf}
		\Lop \{f\} = \sum_{\bm{k}\in\Z^d} \widehat{L}[\bm{k}] \widehat{f}[\bm{k}] e_{\bm{k}},
	\end{equation}	
	where the convergence holds in $\Sp'(\T^d)$. 
	Conversely, any slowly growing sequence $(\widehat{L}[\bm{k}])_{\bm{k}\in\Z^d}$ specifies an operator $\Lop \in  \mathcal{L}_{\mathrm{SI}} (\Sp'(\T^d))$  via the relation \eqref{eq:Lf}.
\end{proposition}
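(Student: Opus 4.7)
The plan is to work in the Fourier domain, exploiting the unique decomposition $f = \sum_{\bm{k}\in\Z^d} \widehat{f}[\bm{k}]e_{\bm{k}}$ available for every $f\in\Sp'(\T^d)$, together with the fact that a sequence in $\C^{\Z^d}$ indexes a convergent Fourier series in $\Sp'(\T^d)$ if and only if it is slowly growing.

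First I would establish the eigenfunction property. Fix $\bm{k}\in\Z^d$ and expand $\Lop e_{\bm{k}} = \sum_{\bm{l}\in\Z^d} c_{\bm{l}} e_{\bm{l}}$ in $\Sp'(\T^d)$. Using the identity $\tau_{\bm{y}} e_{\bm{k}} = \mathrm{e}^{-\mathrm{i}\langle\bm{y},\bm{k}\rangle}e_{\bm{k}}$ together with the shift-invariance $\Lop\tau_{\bm{y}}=\tau_{\bm{y}}\Lop$, one obtains $\tau_{\bm{y}}(\Lop e_{\bm{k}}) = \mathrm{e}^{-\mathrm{i}\langle\bm{y},\bm{k}\rangle}(\Lop e_{\bm{k}})$ for every $\bm{y}\in\T^d$. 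Expanding both sides in Fourier series and invoking the uniqueness of the Fourier decomposition yields $c_{\bm{l}}\bigl(\mathrm{e}^{-\mathrm{i}\langle\bm{y},\bm{l}\rangle}-\mathrm{e}^{-\mathrm{i}\langle\bm{y},\bm{k}\rangle}\bigr)=0$ for every $\bm{l}$ and every $\bm{y}$; hence $c_{\bm{l}}=0$ as soon as $\bm{l}\neq\bm{k}$, and setting $\widehat{L}[\bm{k}]:=c_{\bm{k}}$ gives $\Lop e_{\bm{k}} = \widehat{L}[\bm{k}]e_{\bm{k}}$.

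Next I would derive the expansion~\eqref{eq:Lf} together with the slow growth of $(\widehat{L}[\bm{k}])_{\bm{k}\in\Z^d}$ in one stroke. Since $\Lop$ is continuous on $\Sp'(\T^d)$, it can be applied termwise to $f = \sum_{\bm{k}}\widehat{f}[\bm{k}]e_{\bm{k}}$, yielding $\Lop f = \sum_{\bm{k}}\widehat{f}[\bm{k}]\widehat{L}[\bm{k}]e_{\bm{k}}$ in $\Sp'(\T^d)$. Specializing to $f=\Sha$, whose Fourier sequence is the constant $1$, produces $\Lop\Sha = \sum_{\bm{k}}\widehat{L}[\bm{k}]e_{\bm{k}}\in\Sp'(\T^d)$; since the Fourier sequence of any generalized periodic function is slowly growing, so is $(\widehat{L}[\bm{k}])_{\bm{k}\in\Z^d}$.

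For the converse direction, given an arbitrary slowly growing sequence $(\widehat{L}[\bm{k}])_{\bm{k}\in\Z^d}$, I would define $\Lop$ by the formula~\eqref{eq:Lf}. Well-posedness in $\Sp'(\T^d)$ follows from the fact that the termwise product of two slowly growing sequences is still slowly growing, hence indexes a convergent Fourier series. Linearity is immediate, and shift-invariance is a termwise consequence of $\tau_{\bm{y}}e_{\bm{k}} = \mathrm{e}^{-\mathrm{i}\langle\bm{y},\bm{k}\rangle}e_{\bm{k}}$. The main obstacle is verifying that $\Lop$ is continuous from $\Sp'(\T^d)$ into itself; the cleanest route is to exhibit its formal adjoint, which is the Fourier multiplier on $\Sp(\T^d)$ with symbol $\widehat{L}$ and maps $\Sp(\T^d)$ into itself because a rapidly decaying sequence multiplied by a slowly growing one remains rapidly decaying. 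Continuity of this adjoint on $\Sp(\T^d)$ then follows from elementary seminorm estimates using a bound of the form $|\widehat{L}[\bm{k}]|\leq C(1+\|\bm{k}\|)^N$, and weak$^*$-continuity of $\Lop$ on $\Sp'(\T^d)$ is obtained by duality.
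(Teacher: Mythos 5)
Your proof is correct. Note that the paper does not actually prove Proposition \ref{prop:firsttrivialstuff}: it cites the result as well-known from the reference on periodic Tikhonov regularization and only supplies, in the remark that follows, the observation that the element-wise product of two slowly growing sequences is slowly growing, which is precisely the well-posedness step in your converse direction. Your argument fills in the remaining pieces in the standard way, and does so cleanly: the diagonalization of $\Lop$ on the $e_{\bm{k}}$ via shift-invariance and uniqueness of Fourier coefficients, the termwise application of $\Lop$ justified by weak* continuity applied to the partial sums, and the continuity of the converse operator obtained by duality from the Fourier multiplier on $\Sp(\T^d)$. The one touch worth highlighting is your derivation of the slow growth of $(\widehat{L}[\bm{k}])_{\bm{k}\in\Z^d}$ by evaluating \eqref{eq:Lf} at $f=\Sha$, whose Fourier sequence is identically $1$, so that $(\widehat{L}[\bm{k}])_{\bm{k}\in\Z^d}$ is exhibited as the Fourier sequence of the generalized function $\Lop\Sha$ and is therefore slowly growing by the characterization recalled in Section \ref{sec:vocabulary}; this avoids any separate quantitative estimate. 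A minor convention point that does not affect the argument: the formal adjoint of the multiplier with symbol $\widehat{L}[\bm{k}]$ is, with the paper's real duality pairing, the multiplier with the reflected symbol $\widehat{L}[-\bm{k}]$, which is again slowly growing, so the duality step goes through unchanged.
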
	
\textit{Remark.} The relation \eqref{eq:Lf} indeed specifies  an element of $\Sp '(\T^d)$ since the sequence $(\widehat{L}[\bm{k}] \widehat{f}[\bm{k}])_{\bm{k}\in \Z^d}$ is slowly growing as the element-wise product between two slowly growing sequences. \\

	 {Spline-admissible operators} are operators $\Lop\in\mathcal{L}_{\mathrm{SI}} (\Sp'(\T^d))$ for which the notion of $\Lop$-spline is well-defined.
	They include classical differential operators, together with their fractional versions~\cite{Unser2014sparse}. 
	Splines are usually considered over the complete real line~\cite{Unser2017splines}. We adapt here the construction to the periodic case. 
	
	\begin{definition}[Pseudoinverse] \label{def:pseudoinverse}
	Let $\Lop \in  \mathcal{L}_{\mathrm{SI}} (\Sp'(\T^d))$. 
	We say that $\Kop \in \mathcal{L}_{\mathrm{SI}} (\Sp'(\T^d))$ is a  \emph{pseudoinverse} of $\Lop$  if  it  satisfies the four relations $
	\Lop \Kop \Lop = \Lop$, $\Kop \Lop \Kop = \Kop$, $(\Lop \Kop)^* = \Lop \Kop$, and $(\Kop \Lop)^* = \Kop \Lop$.
	\end{definition}
	
		\textit{Remark.} The operator $\Kop$  is also called the Moore-Penrose pseudoinverse~\cite{campbell2009generalized}. When it exists, the pseudoinverse is known to be unique~\cite{ben2003generalized}. 
		In our case, we shall see thereafter that the two relations $\Lop \Kop \Lop = \Lop$ and $\Kop \Lop \Kop = \Kop$ are sufficient to characterize the pseudoinverse, and that they imply the self-adjoint relations $(\Lop \Kop)^* = \Lop \Kop$ and $(\Kop \Lop)^* = \Kop \Lop$. 
		
	\begin{definition}[Spline-Admissible Operator] \label{def:splineadmissible}
		An   operator $\Lop  \in  \mathcal{L}_{\mathrm{SI}} (\Sp'(\T^d))$ is said to be \emph{spline-admissible} if 
		\begin{itemize}
			\item it has a finite-dimensional null-space and
			\item it admits a pseudoinverse operator $\Kop  \in  \mathcal{L}_{\mathrm{SI}} (\Sp'(\T^d))$. 
		\end{itemize}
	\end{definition}
	
	Spline-admissible operators, their null space, and their pseudoinverse can be readily characterized by their Fourier sequence as follows.
	
	\begin{proposition} \label{prop:finitedimNL}
		Let $\Lop \in   \mathcal{L}_{\mathrm{SI}} (\Sp'(\T^d))$ and set $K_\Lop := \{\bm{k} \in \Z^d, \ |\widehat{L}[\bm{k}]|  \neq 0\}$ and $N_\Lop := \{\bm{k} \in \Z^d, \   |\widehat{L}[\bm{k}]|  = 0\}  = \Z^d \backslash K_{\Lop}$.
		Then,
		\begin{equation} \label{eq:NLKL}
		\NL =  \{ f \in \Sp'(\T^d):\, \widehat{f}[\bm{k}] = 0, \,\forall \bm{k} \in K_{\Lop} \} = \overline{\Span} \{e_{\bm{k}}, \  \bm{k} \in N_{\Lop} \}, 
		\end{equation}
		where $ \overline{\Span} \  A$ is the closure of the span of $A$ for the topology of $\Sp'(\T^d)$. 
		Hence, $\NL$ is finite dimensional if and only if finitely many $\widehat{L}[\bm{k}] $ are null. 
		Moreover, $\Lop$ admits a pseudoinverse $\Kop\in\mathcal{L}_{\mathrm{SI}} (\Sp'(\T^d))$ if and only if the sequence
	\begin{equation} \label{eq:defL+seq}
		 \widehat{L^\dagger}[\bm{k}] = 
		\begin{cases} 
		\widehat{L}[\bm{k}]^{-1}  &\text{ if } \bm{k} \in K_\Lop, \\
		0 & \text{ otherwise}
		\end{cases}
	\end{equation}
	is slowly growing. In that case, the Fourier sequence of the pseudoinverse $\Lop^\dagger$ is given by $( \widehat{L^\dagger}[\bm{k}] )_{\bm{k} \in \Z^d}$.
			
	\end{proposition}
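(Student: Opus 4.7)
My plan is to reduce each of the three assertions to an elementary pointwise statement on Fourier sequences by repeatedly invoking Proposition \ref{prop:firsttrivialstuff}, exploiting the fact that the generalized Fourier decomposition in $\Sp'(\T^d)$ is unique: an identity $\sum_{\bm{k}} a[\bm{k}] e_{\bm{k}} = \sum_{\bm{k}} b[\bm{k}] e_{\bm{k}}$ in $\Sp'(\T^d)$ is equivalent to $a[\bm{k}]=b[\bm{k}]$ for all $\bm{k}\in\Z^d$. Applying \eqref{eq:Lf}, this immediately gives $f\in\NL$ iff $\widehat{L}[\bm{k}]\widehat{f}[\bm{k}] = 0$ for every $\bm{k}$, hence iff $\widehat{f}[\bm{k}] = 0$ for every $\bm{k}\in K_\Lop$, which is the first equality of \eqref{eq:NLKL}.

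For the second equality, I would argue by double inclusion. First, $\NL$ is the preimage of $\{0\}$ under the continuous operator $\Lop$ and is therefore closed in $\Sp'(\T^d)$; since it contains every $e_{\bm{k}}$ with $\bm{k}\in N_\Lop$, it contains $\overline{\Span}\{e_{\bm{k}},\bm{k}\in N_\Lop\}$. Conversely, any $f\in\NL$ admits the generalized Fourier expansion $f=\sum_{\bm{k}\in N_\Lop}\widehat{f}[\bm{k}]e_{\bm{k}}$, whose partial sums $\sum_{\|\bm{k}\|\leq R,\,\bm{k}\in N_\Lop}\widehat{f}[\bm{k}]e_{\bm{k}}$ converge to $f$ in $\Sp'(\T^d)$ thanks to the slow growth of $(\widehat{f}[\bm{k}])_{\bm{k}\in\Z^d}$ recalled in Section \ref{sec:vocabulary}; this exhibits $f$ as a weak-$*$ limit of finite combinations of $\{e_{\bm{k}},\bm{k}\in N_\Lop\}$. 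The finite-dimensional claim then follows from the linear independence of $\{e_{\bm{k}}\}_{\bm{k}\in\Z^d}$ in $\Sp'(\T^d)$ together with the fact that a finite-dimensional subspace of a Hausdorff topological vector space is automatically closed: $\dim\NL=|N_\Lop|$, which is finite iff only finitely many $\widehat{L}[\bm{k}]$ vanish.

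For the pseudoinverse characterization I would test the two Penrose identities $\Lop\Kop\Lop=\Lop$ and $\Kop\Lop\Kop=\Kop$ on the eigenfunctions $e_{\bm{k}}$. By Proposition \ref{prop:firsttrivialstuff}, composition acts as multiplication of Fourier multipliers, so these identities are equivalent to the scalar relations $\widehat{L}[\bm{k}]^{2}\widehat{K}[\bm{k}]=\widehat{L}[\bm{k}]$ and $\widehat{K}[\bm{k}]^{2}\widehat{L}[\bm{k}]=\widehat{K}[\bm{k}]$ for every $\bm{k}\in\Z^d$. On $K_\Lop$ the first forces $\widehat{K}[\bm{k}]=\widehat{L}[\bm{k}]^{-1}$, while on $N_\Lop$ the second forces $\widehat{K}[\bm{k}]=0$, so the Fourier sequence of $\Kop$ is uniquely determined and coincides with $\widehat{L^\dagger}$ from \eqref{eq:defL+seq}. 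By Proposition \ref{prop:firsttrivialstuff} such a $\Kop\in\mathcal{L}_{\mathrm{SI}}(\Sp'(\T^d))$ exists iff $(\widehat{L^\dagger}[\bm{k}])_{\bm{k}\in\Z^d}$ is slowly growing; conversely, starting from this sequence Proposition \ref{prop:firsttrivialstuff} produces an operator $\Lop^\dagger$ in $\mathcal{L}_{\mathrm{SI}}(\Sp'(\T^d))$ and the Penrose identities hold by inspection on each Fourier mode. The self-adjoint relations flagged in the remark after Definition \ref{def:pseudoinverse} come for free because both $\Lop\Kop$ and $\Kop\Lop$ share the real-valued Fourier multiplier $\One_{K_\Lop}$.

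The main obstacle I anticipate is the second equality of \eqref{eq:NLKL}: one has to argue that an arbitrary $f\in\NL$ actually lies in the weak-$*$ closure of $\Span\{e_{\bm{k}},\bm{k}\in N_\Lop\}$ and not merely in a larger object. This forces one to use the slowly growing structure of generalized Fourier sequences and the corresponding convergence of partial Fourier sums in $\Sp'(\T^d)$, which is the decisive topological ingredient in the whole argument.
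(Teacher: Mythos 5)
Your proposal is correct and follows essentially the same route as the paper: both arguments reduce everything to the Fourier multiplier level via Proposition \ref{prop:firsttrivialstuff}, identify $\NL$ from the vanishing of $\widehat{L}[\bm{k}]\widehat{f}[\bm{k}]$, and derive/verify the pseudoinverse sequence \eqref{eq:defL+seq} by testing the two Penrose identities on the eigenfunctions $e_{\bm{k}}$, with the self-adjointness of $\Lop\Kop$ and $\Kop\Lop$ coming for free from the real multiplier $\One_{K_\Lop}$. The only difference is cosmetic: you spell out the weak-$*$ convergence of partial Fourier sums for the closure identity in \eqref{eq:NLKL}, which the paper asserts without detail.
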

	
		The first part of Proposition \ref{prop:finitedimNL}  on the null space of $\Lop$ is well-known in the non periodic setting with $d=1$, due to the following result: any shift-invariant and finite-dimensional linear subspace of $\Sp'(\R)$ is constituted of exponential polynomial functions~\cite{Unser2014sparse}. 	
	This result has been adapted in the periodic setting in~\cite[Proposition 1]{Badou}. We provide here a simple proof for the general case $d\geq 1$.
	
\begin{proof}[Proof of Proposition  \ref{prop:finitedimNL}]
We first observe that, for any  $\Lop \in  \mathcal{L}_{\mathrm{SI}} (\Sp'(\T^d))$, the null space $\NL$ is the $\Sp'(\T^d)$-closure of  $\Span \{ e_{\bm{k}}, \ \bm{k} \notin \KL\}$. This is simply due to the relation $\Lop f = \sum_{\bm{k} \in\KL} \widehat{L}[\bm{k}] \widehat{f}[\bm{k}] e_{\bm{k}}$, from which one deduces that $f \in \NL$ if and only if $\widehat{f}[\bm{k}] = 0$ for every $\bm{k} \notin \KL$, giving \eqref{eq:NLKL}.
It is then obvious that $\mathrm{dim} \NL = \mathrm{Card} ( \Z^d \backslash \KL) $ and is therefore finite dimensional if and only if $\Z^d \backslash \KL$ is finite.  

If the sequence $ (\widehat{L^\dagger}[\bm{k}])_{\bm{k}\in \Z^d}$ defined in \eqref{eq:defL+seq}  is slowly growing and denoting by $\Kop$ the corresponding operator, then, for any $f \in \Sp  '(\T^d)$, 
	\begin{equation}
	\Kop \mathrm{L} \Kop \{f\} = \sum_{\bm{k}\in K_{\mathrm{L}}}  \widehat{L}[\bm{k}]^{-1} \widehat{L}[\bm{k}] \widehat{L}[\bm{k}]^{-1}  \widehat{f}[\bm{k}] \ek
	=
	\sum_{\bm{k}\in K_{\mathrm{L}}}  \widehat{L}[\bm{k}]^{-1}  \widehat{f}[\bm{k}] \ek
	= \Kop \{f\}, 
	\end{equation} 
	which implies that that $\Kop \mathrm{L} \Kop = \Kop$. We show identically that $\Lop \Kop \Lop = \Lop$. Finally, we have $\Lop^\dagger\Lop f=\Lop\Lop^\dagger f=\sum_{\bm{k}\in K_\Lop} \widehat{f}[\bm{k}]$ yielding trivially $(\Lop^\dagger\Lop)^\ast=\Lop^\dagger\Lop$ and $(\Lop\Lop^\dagger)^\ast=\Lop\Lop^\dagger$. Therefore, $\Kop$ is indeed the pseudoinverse of $\Lop$.
	Conversely, if the pseudoinverse exists, we have in particular that
	\begin{equation}
		 \widehat{L}[\bm{k}] \ek = \Lop \ek = \Lop \Kop \Lop \ek =  \widehat{L}[\bm{k}]^2 \widehat{L^\dagger}[\bm{k}] \ek.
	\end{equation}
	Hence, $ \widehat{L^\dagger}[\bm{k}] = \widehat{L}[\bm{k}]^{-1}$ as soon as $\widehat{L}[\bm{k}] \neq 0$. Similarly, 
	\begin{equation}
		 \widehat{L^\dagger}[\bm{k}] \ek = \Kop \ek = \Kop \Lop  \Kop \ek =  \widehat{L^\dagger}[\bm{k}]^2 \widehat{L }[\bm{k}] \ek,
	\end{equation}	
	implying that $ \widehat{L^\dagger}[\bm{k}] = 0$ if  $ \widehat{L}[\bm{k}] = 0$. This shows that the sequence $ \widehat{L^\dagger}$ is given by \eqref{eq:defL+seq}, implying in particular that this sequence, coming from a LSI continuous operator, is slowly growing. 
\end{proof}
	
	Thanks to Proposition  \ref{prop:finitedimNL}, $\Lop$ is spline-admissible if and only if it vanishes on finitely many $e_{\bm{k}}$ and its Fourier sequence does not vanish faster than any rational function. This excludes notably  convolution operators $\Lop f= h * f$ where the impulse response $h \in \mathcal{S}(\T^d)$ is such that the sequence $(\widehat{h}[\bm{k}])_{\bm{k}\in \Z^d}$ never vanishes and decays exponentially fast. For such operators indeed, the sequence $(\widehat{L}[\bm{k}]^{-1} :=  \widehat{h}[\bm{k}]^{-1})_{\bm{k}\in \Z^d}$ is not slowly growing and is therefore not the Fourier sequence of a LSI operator from $\Sp'(\T^d)$ to itself.
	With our notation, the pseudoinverse of a spline-admissible $\Lop$ is given by
		\begin{equation}	
	\Kop f =  \sum_{\bm{k} \in \KL} \frac{\widehat{f}[\bm{k}]}{\widehat{L}[\bm{k}]} e_{\bm{k}}, \qquad \forall f\in\mathcal{S}'(\mathbb{T}^d).
	\end{equation}
	The orthogonal projector on the null space $\NL$ of a spline-admissible operator satisfies
	\begin{align} \label{projdef}
		\Proj_{\NL} f &= \sum_{n=1}^{N_0} \widehat{f}[\bm{k}_n] e_{\bm{k}_n}, \qquad \forall f\in\mathcal{S}'(\mathbb{T}^d),
	\end{align}
	where $\{k_1 ,\ldots, k_{N_0}\} = N_{\mathrm{L}} = \Z^d \backslash K_{\mathrm{L}}$. 
	From the definition of the pseudoinverse, it is moreover easy to obtain
	\begin{equation} \label{eq:LopKop}
		\Proj_{\NL}= \mathrm{Id} -\Lop \Kop  = \mathrm{Id} -\Kop \Lop.	\end{equation}
 We recover from \eqref{eq:LopKop} that a spline-admissible operator $\Lop$ is invertible, with inverse $\Lop^{-1} = \Kop$, if and only if its null space is trivial.

		\textit{Remark.} Spline-admissible operators are sometimes refer to as \emph{Fredholm operators}, which are operators between Banach spaces whose kernel (null space) and co-kernel (quotient of the output space with the range of the operators) are finite-dimensional. This concept can be extended to operators between $\Sp'(\T^d)$ to itself, and a spline admissible operator is therefore a Fredholm operator.
	\begin{definition}[Spectral Growth]\label{def:growth}
	Let $\Lop$ be a spline-admissible and $\widehat{L}$ its Fourier sequence. If there exists a parameter $\gamma \geq 0$ and some constants $0<A \leq B < \infty$ and $k_0 \geq 0$ such that
	\begin{equation} \label{eq:AGforglop}
	A   \lVert \bm{k} \rVert^{\gamma} \leq |\widehat{L} [ \bm{k} ]| \leq B  \lVert \bm{k} \rVert^{\gamma}, \qquad \forall  \lVert \bm{k} \rVert \geq k_0 \in \Z^d,
	\end{equation}
	then, we call $\gamma$ the \emph{spectral growth} of $\Lop$. For brevity, we write \eqref{eq:AGforglop} as $|\widehat{L} [ \bm{k} ]|=\Theta( \lVert \bm{k} \rVert^{\gamma}).$
	\end{definition}
	
	If it exists, the spectral growth of a spline-admissible operator is unique. It measures the impact of $\Lop$ on the smoothness of the input functions. For instance, for any $\tau \in \R$, $f \in \mathcal{H}_2^\tau(\T^d)$ if and only if $\Lop f \in \mathcal{H}_2^{\tau - \gamma} (\T^d)$, where the periodic Sobolev spaces are defined in \eqref{eq:sobolevspace}. Note that the existence of $\gamma$ is not guaranteed. 
	This is typically the case if the Fourier sequence of $\Lop$ does not behave purely polynomially (\emph{e.g.}, when $d = 1$ and $\widehat{\Lop}[{k}] =  \lvert {k} \rvert  \log ( 1 +  \lvert {k} \rvert )$) or alternates between two polynomial behaviors (\emph{e.g.}, $\widehat{\Lop}[2k] = k$ and $\widehat{\Lop}[2k+1] = k^2$). However, most of the usual pseudo-differential operators admit a spectral growth, as will be exemplified in Section \ref{sec:differentialop}.   
	
	As we shall see, an important ingredient for the definition of periodic splines is the notion of Green's function of an operator $\Lop$. It is defined as the $\Lop$-primitive of the Dirac comb $\Sha$.
	\begin{definition}[Green's Function] \label{def:green}
		Let $\Lop$ be a spline-admissible operator with pseudoinverse $\Kop$. The generalized function $g_{\Lop} := \Kop \Sha \in \Sp'(\T^d)$ is called the \emph{Green's function} of $\Lop$. 
	\end{definition}
	
	The Fourier sequence $(\widehat{g}_\Lop [\bm{k}])_{\bm{k}\in\Z^d}$ of the Green's function of $\Lop$ coincides with the Fourier sequence of $\Kop$. The proposed notion of Green's function differs from certain convention for pseudo-differential operators for functions in $\R^d$. It is conventional to define Green's functions as fundamental solutions to $\Lop g_{\Lop}=  \delta$. This is however not adapted to the torus (for which $\delta$ is replaced by the Dirac comb $\Sha$). Indeed, there exists no periodic generalized function $g_\Lop$ as soon as the null space of $\Lop$ is non trivial, because the generalized function $\Lop g_{\Lop}$ has vanishing Fourier coefficients at null space frequencies, contrary to $\Sha$. Definition \ref{def:green} is the periodic adaptation to usual spherical Green's function constructions, such as the ones used in \cite[Chapter 4]{freeden2008spherical} and \cite[Definition 5]{simeoni2020functionalpaper}. Note that the different notions coincides when the operator $\Lop$ is invertible \cite[Proposition 4.3]{simeoni2020functional}.  \\
		 
		\textbf{Running example $\Lop = \Dop^N$, $N\geq 1$.} 
	The Fourier sequence of $\Lop = \Dop^N$ is $\widehat{D^N} [k] = (\mathrm{i} k)^N$.  	
	Then, the null space of $\Dop^N$ consists in the constant functions and is of dimension $N_0 = 1$ with $N_\Lop = \{0\}$.   
	The pseudoinverse of $\Dop^N$ is the operator with Fourier sequence $\widehat{(D^N)^{\dagger}} [{k}] = \frac{1-\delta[{k}]}{(\mathrm{i} k)^N}$ with $\delta[\cdot]$ the Kronecker delta, which is therefore the Fourier sequence of $g_{\Dop^N}$.
	Having a finite-dimensional null space and a pseudoinverse, the operator $\Dop^N$ is  spline-admissible in the sense of Definition \ref{def:splineadmissible}.  Moreover, it admits a spectral growth $\gamma = N$. 
	
	\subsection{Periodic $\Lop$-splines}
	
	The class of spline-admissible operators acting on generalized periodic functions allow us to adapt the classical notion of splines~\cite{Unser1999splines} to the periodic setting in full generality. 	
		
	\begin{definition}[Periodic $\Lop$-spline] \label{def:periodicsplines}
		Let $\Lop$ be a spline-admissible operator. 
		A \emph{periodic $\Lop$-spline} is a function $f \in \Sp'(\T^d)$ such that 
		\begin{equation} \label{eq:Lopfspline}
		 	\Lop 	f = \sum_{k=1}^K a_k \Sha( \cdot - \bm{x}_k),	
		\end{equation}
		with $K \geq 0$ is the number of knots ($K=0$ corresponds to the case where $\Lop f = 0$, \emph{i.e.}, $f$ is in the null space of $\Lop$), $\bm{x}_k \in \T^d$ are the distinct \emph{knots},  and $a_k \in \R \backslash\{0\}$ are the weights of $f$. The pairs $\{(a_k,\bm{x}_k), \, k=1,\ldots,K \}$ are called the \emph{innovations} of the periodic $\Lop$-spline.
	\end{definition}		
	In words, a periodic $\Lop$-spline is a generalized periodic function such that its $\Lop$-derivative is a Dirac stream with finite rate of innovation \cite{Vetterli2002FRI} --i.e. it has a finite number of innovations per period. 
		It is worth noting that the weights of a periodic $\Lop$-spline fulfill a linear system as soon as the null space of $\Lop$ is non trivial. For instance, if $ \widehat{L}[\bm{0}]  = \Lop e_{\bm{0}} =  0$, as is the case for $\Lop = \mathrm{D}$ in dimension $d=1$,  then, any $\Lop$-spline $f$  with weights $a_k$ satisfies  $ \sum_{k=1}^K a_k = \widehat{\mathrm{L} f }[\bm{0}]  = \widehat{L}[\bm{0}] \widehat{f}[\bm{0}] = 0$. 
		We generalize this idea to any spline-admissible operator. 
				
	\begin{proposition} \label{prop:Mnkmatrix}
	Let $\Lop   \in  \mathcal{L}_{\mathrm{SI}} (\Sp'(\T^d))$ be a spline-admissible operator with Green's function $g_{\Lop}\in\mathcal{S}'(\mathbb{T}^d)$ and null space $\NL$ with null space frequencies $N_{\Lop}:=\{\bm{k}_1,\ldots,\bm{k}_{N_0}\}\subset\mathbb{Z}^{d}$. 
	There exists an $\Lop$-spline of the form \eqref{eq:Lopfspline} if and only if the weight vector $\bm{a}:=(a_1,\ldots,a_K)\in\R^K$ satisfies $\bm{\mathrm{M}}  \bm{a} = \bm{0}$ with $\bm{\mathrm{M}} \in \R^{N_0 \times K }$ the matrix whose entries are
	\begin{equation} \label{eq:matrixM}
		M[n,k] = \mathrm{e}^{- \mathrm{i} \langle  \bm{k}_n, \bm{x}_k \rangle } , \qquad \forall n=1,\ldots , N_0, \forall k =1, \ldots , K.
	\end{equation}

	In that case, the generic form of a periodic $\Lop$-spline $f$ satisfying \eqref{eq:Lopfspline} is
		\begin{equation} \label{eq:splineform}
	f = \sum_{k=1}^K a_k g_{\Lop}( \cdot -\bm{x}_k) + p
	\end{equation}
	with $\bm{\mathrm{M}}  \bm{a} = \bm{0}$ and $p \in \NL$. 
	\end{proposition}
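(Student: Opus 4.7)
The plan is to translate the spline equation $\Lop f = \sum_{k=1}^K a_k \Sha(\cdot - \bm{x}_k)$ into Fourier coefficients and then read off both the compatibility condition on $\V{a}$ and the explicit form of $f$ using the Green's function.

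First, I would compute the Fourier series of the right-hand side $w := \sum_{k=1}^K a_k \Sha(\cdot - \bm{x}_k)$. Using $\widehat{\Sha}[\bm{k}] = 1$ together with the shift rule $\widehat{\Sha(\cdot - \bm{x}_k)}[\bm{k}] = \mathrm{e}^{-\mathrm{i}\langle \bm{k}, \bm{x}_k\rangle}$, one obtains $\widehat{w}[\bm{k}] = \sum_{k=1}^K a_k \mathrm{e}^{-\mathrm{i}\langle \bm{k}, \bm{x}_k\rangle}$ for all $\bm{k}\in\Z^d$. Then, since $\widehat{\Lop f}[\bm{k}] = \widehat{L}[\bm{k}]\widehat{f}[\bm{k}]$ by Proposition~\ref{prop:firsttrivialstuff}, the equation $\Lop f = w$ forces $\widehat{w}[\bm{k}_n] = 0$ at every $\bm{k}_n \in N_\Lop$ (where $\widehat{L}[\bm{k}_n] = 0$). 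These $N_0$ equations read exactly
\begin{equation*}
\sum_{k=1}^K a_k \,\mathrm{e}^{-\mathrm{i}\langle \bm{k}_n, \bm{x}_k\rangle} = 0, \qquad n = 1,\ldots, N_0,
\end{equation*}
which is $\M{M}\V{a} = \V{0}$ for the matrix $\M{M}$ of \eqref{eq:matrixM}. This establishes necessity.

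For sufficiency, assume $\M{M}\V{a} = \V{0}$ and set $f_0 := \sum_{k=1}^K a_k g_\Lop(\cdot - \bm{x}_k)$. Since $\Kop$ is shift-invariant and $g_\Lop = \Kop \Sha$, one has $g_\Lop(\cdot - \bm{x}_k) = \Kop\{\Sha(\cdot - \bm{x}_k)\}$, so $f_0 = \Kop w$. Applying $\Lop$ and using the identity $\Lop \Kop = \mathrm{Id} - \Proj_{\NL}$ from \eqref{eq:LopKop} gives $\Lop f_0 = w - \Proj_{\NL} w$. But the hypothesis $\M{M}\V{a} = \V{0}$ is precisely the statement $\widehat{w}[\bm{k}_n] = 0$ for $\bm{k}_n \in N_\Lop$, so by the explicit formula \eqref{projdef} for $\Proj_{\NL}$ we conclude $\Proj_{\NL} w = 0$ and therefore $\Lop f_0 = w$. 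This also shows that an $\Lop$-spline with the prescribed innovations exists.

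Finally, any other solution $f$ of $\Lop f = w$ satisfies $\Lop (f - f_0) = 0$, i.e., $p := f - f_0 \in \NL$, yielding the representation \eqref{eq:splineform}. Conversely, every $f$ of that form clearly satisfies $\Lop f = w$ by linearity. The only place where care is needed is the interchange between the shift operator and $\Kop$ on the tempered distribution $\Sha$, which is licit because $\Kop \in \mathcal{L}_{\mathrm{SI}}(\Sp'(\T^d))$ commutes with translations on all of $\Sp'(\T^d)$; the rest of the argument is pure Fourier bookkeeping, and I do not expect any substantial obstacle beyond ensuring that convergence of the various Fourier series is handled in the weak-$*$ topology of $\Sp'(\T^d)$ as in Proposition~\ref{prop:firsttrivialstuff}.
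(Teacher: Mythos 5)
Your proposal is correct and follows essentially the same route as the paper's proof: necessity is read off from the vanishing of $\widehat{L}[\bm{k}_n]\widehat{f}[\bm{k}_n]$ at the null space frequencies, and sufficiency is obtained by setting $f_0 = \Kop w$ and invoking $\Lop\Kop = \mathrm{Id} - \Proj_{\NL}$ together with $\Proj_{\NL} w = 0$, with the final representation following from the fact that two solutions differ by an element of $\NL$. The only difference is cosmetic (your explicit remark on the commutation of $\Kop$ with shifts, which the paper leaves implicit).
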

	
	\begin{proof}
	Assume first that $f$ satisfies \eqref{eq:Lopfspline}. Then,  we have that, for any $1 \leq  n \leq N_0$,
	\begin{equation}
	0 = \widehat{L}[\bm{k}_n ] \widehat{f}[\bm{k}_n] = \langle \Lop f , e_{\bm{k}_n} \rangle = \sum_{k=1}^K a_k \mathrm{e}^{- \mathrm{i}  \langle \bm{x}_k , \bm{k}_n\rangle}= (\bm{\mathrm{M}}  \bm{a})_n,
	\end{equation}
	or equivalently, $\bm{\mathrm{M}}  \bm{a} = \bm{0} \in \R^{N_0}$. 
	Assume now that $\bm{a} \in \R^K$ satisfies $\bm{\mathrm{M}}  \bm{a} = \bm{0}$. We set $w = \sum_{k=1}^K a_k \Sha( \cdot - \bm{x}_k)$. 
	Then, 
	\begin{equation}
	\Proj_{\NL} \{w\} = \sum_{n=1}^{N_0} \widehat{w} [\bm{k}_n] e_{\bm{k}_n} =  \sum_{n=1}^{N_0} \sum_{k=1}^K a_k \mathrm{e}^{- \mathrm{i}  \langle \bm{x}_k , \bm{k}_n\rangle} = 0.
	\end{equation}
	This implies, using \eqref{eq:LopKop}, that 
	\begin{equation}
	\Lop \Kop w = w -  \Proj_{\NL} w  = w.
	\end{equation}
	Then, $f = \Kop w=\sum_{k=1}^K a_k g_{\Lop}( \cdot -\bm{x}_k)$ satisfies $\Lop f = \Lop \Kop w = w = \sum_{k=1}^K a_k \Sha( \cdot - \bm{x}_k)$, and is therefore a periodic $\Lop$-spline.
	Moreover, two periodic $\Lop$-spline  solutions of \eqref{eq:Lopfspline} only differs from a null space component $p \in \NL$, implying \eqref{eq:splineform}. 
	Finally, when $K \leq N_0$, because the matrix $M$ imposes $N_0$ independent conditions on the vector $\bm{a}$ of size $K$, we deduce that $\bm{a}=\bm{0}$ and $f = p \in \NL$.
	\end{proof}
	
	Proposition \ref{prop:Mnkmatrix} essentially tells us that periodic $\Lop$-splines can be written as sums of a linear combination of shifts of the Green's functions and a trigonometric polynomial in the null space of $\Lop$. The shifts are moreover given by the spline knots and the weights are constrained to verify a certain annihilation equation.    In particular, an important consequence of Proposition \ref{prop:Mnkmatrix} is that the Green's function $g_\Lop = \Kop \Sha$ is \emph{not} a periodic $\Lop$-spline when the null space of $\Lop$ is non trivial. Indeed, we have that $\Lop \Kop \Sha = \Sha - \Proj_{\NL} \Sha$, which is not of the form \eqref{eq:Lopfspline} due to the trigonometric polynomial $\Proj_{\NL} \Sha = \sum_{n=1}^{N_0} e_{\bm{k}_n}$. \\
	
	\textbf{Running example $\Lop = \Dop^N$, $N\geq 1$.} 
	Let $f$ be a periodic $(\Dop^N)$-spline with knots $x_1, \ldots , x_K$. Then, $f$ is a piecewise-polynomial. More precisely, $f$ is a polynomial of degree at most $(N-1)$ on each intervals $[x_{k+1} , x_k]$, $k = 1, \ldots, K$ (with the convention that $x_{K+1} = x_1$). Moreover, for $N \geq 2$, $f$ has continuous derivatives up to order $(N-2)$. 
	 In particular, a periodic $\Dop$-spline is piecewise constant and a periodic $(\Dop^2)$-spline is piecewise linear and continuous.
	A non constant periodic $(\Dop^N)$-spline $f$ has at least $K = 2$ knots. In particular, there is no periodic $(\Dop^N)$-spline with only $1$ knots. Indeed, such a spline would be such that $\mathrm{D}^N f = a_1 \Sha(\cdot - x_1)$ and would satisfy, according to Proposition \ref{prop:Mnkmatrix}, $\mathrm{e}^{- \mathrm{i} x_1} a_1 = 0$, hence $a_1 = 0$ and $f$ would be constant, which we excluded.	

	\section{Periodic Native Spaces and Measurement Spaces} \label{sec:constructionNativeSpace}
		
		The goal of this section is to construct the Banach functions spaces associated to the optimization problem \eqref{eq:optipb}. More precisely, we shall introduce:
		\begin{itemize}
			\item The native space $\ML(\T^d)$: the generalized functions $f$ for which the regularization term $\lVert \Lop  f \rVert_{\mathcal{M}}$ is finite. 			\item The measurement space $\CL(\T^d)$: the generalized  functions $\nu$ that can be used as linear functionals over the native space $\ML(\T^d)$.
		\end{itemize}
		The measurement space and the native space form a dual pair, in the same way the space of periodic continuous functions $\mathcal{C}(\T^d)$ and the space of periodic Radon measures $\mathcal{M}(\T^d)$ do. We remind this fact and other useful ones in Section \ref{sec:CandM} before constructing the native and measurements spaces in Sections \ref{sec:nativespace} and \ref{sec:measurementspace}, respectively. 
		
	\subsection{The spaces $\mathcal{M}(\T^d)$ and $\mathcal{C}(\T^d)$} \label{sec:CandM}
		
		The space of continuous periodic function is denoted by $\mathcal{C}(\T^d)$.
		It is a Banach space when endowed with the supremum norm $\lVert  \varphi \rVert_{\infty} = \sup_{\bm{x}\in\T^d} \lvert \varphi(\bm{x}) \rvert$. 
		The space of periodic finite Radon measure is denoted by $\mathcal{M}(\T^d)$. According to the Riesz-Markov theorem~\cite{Gray1984shaping}, $\mathcal{M}(\T^d)$ is isometric to the space of continuous and linear functionals over $\mathcal{C}(\T^d)$. 
		As is classical, we therefore make a complete identification between the space of periodic finite Radon measures and the dual of $\mathcal{C}(\T^d)$, \emph{i.e.},
		\begin{equation} \label{eq:normM}
		\mathcal{M}(\T^d) = (\mathcal{C}(\T^d) , \lVert \cdot \rVert_\infty)'.
		\end{equation}
		Then, $\mathcal{M}(\T^d)$ is a Banach space for the \emph{total variation (TV)} norm 
		\begin{equation} \label{eq:Mnorm}
			\lVert w \rVert_{\mathcal{M}} = \sup_{\varphi \in \mathcal{C}(\T^d), \ \lVert \varphi \rVert_\infty = 1} \langle w , \varphi \rangle,	
		\end{equation}
		with $ \langle w ,\varphi \rangle = \int_{\T^d} \varphi (\bm{x}) w(\mathrm{d}\bm{x})$. When $w \in L_1(\T^d)$, we have that $\lVert w \rVert_{\mathcal{M}} = \lVert w \rVert_{1}$.
		For $w = \sum_{k=1}^K a_k \Sha(\cdot - \bm{x}_k)$ with distinct $\bm{x}_k \in \T$, we have $\lVert w \rVert_{\mathcal{M}} = \sum_{k=1}^K \lvert a_k\rvert = \lVert \bm{a}\rVert_1$. 	
		Since the Schwartz space $\Sp(\T^d)$ is dense in $\mathcal{C}(\T^d)$ for the norm $\lVert \cdot \rVert_\infty$ \cite[Theorem 4.25]{rudin2006real}, we can restrict the supremum 	
		over functions in the Schwartz space $\Sp(\T^d)$ in \eqref{eq:Mnorm}. This allows us to extend the total variation norm over $\Sp'(\T^d)$ and to deduce that
		\begin{equation}
			\mathcal{M}(\T^d) = \left\{ w \in \Sp'(\T^d), \  \lVert w \rVert_{\mathcal{M}}= \sup_{\varphi \in \mathcal{S}(\T^d), \ \lVert f \rVert_\infty = 1} \langle w , \varphi \rangle < \infty \right\}.
		\end{equation}
		Note that we have the continuous embeddings
		\begin{equation} \label{eq:easyembed}
			\Sp(\T^d) \subseteq \mathcal{C}(\T^d) \subseteq \Sp'(\T^d)  \quad \text{and}  \quad \Sp(\T^d) \subseteq \mathcal{M}(\T^d) \subseteq \Sp'(\T^d).
		\end{equation}
		In what follows, $\mathcal{M}(\T^d)$ will be endowed with the \emph{weak* topology} defined in Section \ref{sec:vocabulary}. The latter is indeed more convenient for our purposes than the Banach topology induced by the TV norm \eqref{eq:Mnorm}. 
		
	\subsection{The Native Space of a Spline-admissible Operator}		 \label{sec:nativespace}
		
		We define the native space on which the optimization problem \eqref{eq:optipb} is well-defined, and identify its structure.
		
		\begin{definition}[Native Space]
			The \emph{native space} associated to the spline-admissible operator $\Lop$ is defined as
			\begin{equation} \label{eq:ML}
				\mathcal{M}_{\Lop} (\T^d) := \left\{ f \in \Sp'(\T^d), \ \Lop f \in \mathcal{M}(\T^d) \right\}. 
			\end{equation}
		\end{definition}
		
		\begin{theorem}[Banach structure of the native space] \label{theo:whatisML}
		Let $\Lop$ be a spline-admissible operator with finite dimensional null space $\NL$ and pseudoinverse $\Kop$. 
		We also fix $p\in [1,\infty]$. 
		Then, $\ML(\T^d)$ is the direct sum
			\begin{equation}
			\ML (\T^d) = \Kop ( \mathcal{M} (\T^d) ) \oplus \NL,
			\end{equation}
			where $\Kop (\mathcal{M}(\T^d)) = \{\Kop w , \ w \in \mathcal{M}(\T^d) \}$. 	
			It is  a Banach space for the norm 
			\begin{equation} \label{eq:normML}
				\lVert f \rVert_{\ML,p} = \left(  \lVert \Lop f \rVert_{\mathcal{M}}^p  + \lVert \Proj_{\NL} f \rVert_{2}^p \right)^{1/p},
			\end{equation}
			with $p\in [1, +\infty]$ and the usual adaptation for $p = \infty$. 
			Moreover, we have the topological embeddings
			\begin{equation}\label{eq:embeddingML}
			\Sp(\T^d) \subseteq \ML(\T^d) \subseteq \Sp'(\T^d). 
			\end{equation}
			The operator $\Lop$ is continuous from $\ML(\T^d)$ to $\mathcal{M}(\T^d)$. Moreover, any periodic $\Lop$-spline is in $\ML(\T^d)$. 
			Finally, the norms $\lVert \cdot \rVert_{\ML,p}$ are all equivalent on $\ML(\T^d)$. 
		\end{theorem}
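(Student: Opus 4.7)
The plan is to use the decomposition $\mathrm{Id} = \Kop \Lop + \Proj_{\NL}$ from \eqref{eq:LopKop} as the engine throughout. For the direct-sum claim, I would first argue that for every $f \in \ML(\T^d)$ we can write $f = \Kop(\Lop f) + \Proj_{\NL} f$, where $\Kop(\Lop f) \in \Kop(\mathcal{M}(\T^d))$ since $\Lop f \in \mathcal{M}(\T^d)$ by definition of the native space, and $\Proj_{\NL} f \in \NL$. The intersection $\Kop(\mathcal{M}(\T^d)) \cap \NL = \{0\}$ follows from $\Kop \Lop \Kop = \Kop$: if $\Kop w \in \NL$, then $\Lop \Kop w = 0$, and applying $\Kop$ and using the Moore--Penrose identity gives $\Kop w = 0$. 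Uniqueness of the decomposition then follows from applying $\Lop$ (which annihilates the $\NL$ part) and using the same argument.

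Next I would check the norm axioms for $\lVert \cdot \rVert_{\ML,p}$: homogeneity is immediate, the triangle inequality follows from the triangle inequalities for $\lVert \cdot \rVert_{\mathcal{M}}$, $\lVert \cdot \rVert_2$ and the $\ell^p$ norm on $\R^2$, and definiteness follows because $\lVert f \rVert_{\ML,p}=0$ forces $\Lop f = 0$ (hence $f \in \NL$) and $\Proj_{\NL} f = 0$ (which then forces $f = 0$). The equivalence of norms across $p \in [1,\infty]$ is trivial since all $\ell^p$ norms on $\R^2$ are equivalent. For completeness (the main technical point), take a Cauchy sequence $(f_n)$ in $\ML(\T^d)$. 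Then $(\Lop f_n)$ is Cauchy in the Banach space $\mathcal{M}(\T^d)$, hence converges to some $w \in \mathcal{M}(\T^d)$, and $(\Proj_{\NL} f_n)$ is Cauchy in the finite-dimensional space $\NL$, hence converges to some $p \in \NL$. The crucial subtlety, which I expect to be the main obstacle, is that the candidate limit $f := \Kop w + p$ should satisfy $\Lop f = w$; this forces $w$ to have zero Fourier coefficients at the null-space frequencies $\bm{k}_n \in N_\Lop$. This is however automatic, because $\Proj_{\NL}(\Lop f_n) = 0$ for every $n$ (the Fourier series of $\Lop f_n$ is supported in $\KL$), and the functionals $w \mapsto \widehat{w}[\bm{k}_n] = \langle w, e_{\bm{k}_n}\rangle$ are continuous on $\mathcal{M}(\T^d)$ since $e_{\bm{k}_n} \in \mathcal{C}(\T^d)$, so $\Proj_{\NL} w = 0$. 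Then $\Lop f = \Lop \Kop w = w - \Proj_{\NL} w = w$, and $\Proj_{\NL} f = p$ (because $\Kop w \in \overline{\Span}\{e_{\bm{k}}, \bm{k}\in \KL\}$ has vanishing null-space Fourier components), giving convergence $f_n \to f$ in $\lVert \cdot \rVert_{\ML,p}$.

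For the embeddings $\Sp(\T^d) \subseteq \ML(\T^d) \subseteq \Sp'(\T^d)$, the first is continuous since $\Lop$ preserves $\Sp(\T^d)$ (its Fourier multiplier is slowly growing while Schwartz sequences are rapidly decaying) and $\Sp(\T^d) \hookrightarrow \mathcal{M}(\T^d)$ by \eqref{eq:easyembed}. The second follows by writing $f = \Kop \Lop f + \Proj_{\NL} f$, observing $\mathcal{M}(\T^d)\hookrightarrow \Sp'(\T^d)$, and using the continuity of $\Kop$ on $\Sp'(\T^d)$ from Proposition \ref{prop:firsttrivialstuff}. The continuity of $\Lop:\ML(\T^d)\to \mathcal{M}(\T^d)$ is immediate from the norm definition since $\lVert \Lop f\rVert_{\mathcal{M}} \leq \lVert f\rVert_{\ML,p}$. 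Finally, an $\Lop$-spline $f$ as in \eqref{eq:Lopfspline} satisfies $\Lop f = \sum_k a_k \Sha(\cdot - \bm{x}_k) \in \mathcal{M}(\T^d)$ with TV norm $\sum_k |a_k|$, so $f \in \ML(\T^d)$.
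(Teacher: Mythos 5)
Your proof is correct and follows the same overall skeleton as the paper's: the identity $\mathrm{Id}=\Kop\Lop+\Proj_{\NL}$ from \eqref{eq:LopKop} drives the direct-sum decomposition, the norm axioms, the embeddings, and the remaining assertions are handled the same way. Two sub-steps differ in a way worth noting. For the trivial intersection you use the Moore--Penrose identity $\Kop\Lop\Kop=\Kop$ (if $\Lop\Kop w=0$ then $\Kop w=\Kop\Lop\Kop w=0$), whereas the paper reads off the vanishing of the Fourier coefficients on $\KL$ and on its complement separately; both are one-line arguments. More substantially, for completeness the paper simply invokes that a direct sum of Banach spaces is Banach, asserting that $\Kop(\mathcal{M}(\T^d))$ ``inherits the completeness of $\mathcal{M}(\T^d)$''; you instead run an explicit Cauchy-sequence argument and, crucially, verify the point that makes the paper's assertion true, namely that the limit $w$ of $\Lop f_n$ in $\mathcal{M}(\T^d)$ satisfies $\Proj_{\NL}w=0$ (because the functionals $w\mapsto\langle w,e_{\bm{k}_n}\rangle$ are norm-continuous on $\mathcal{M}(\T^d)$), so that $\Lop(\Kop w)=w$ exactly. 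This is the closedness of $\Proj_{\NL^\perp}(\mathcal{M}(\T^d))$ that the paper leaves implicit, and your version is the more self-contained one. Finally, for the embedding $\ML(\T^d)\subseteq\Sp'(\T^d)$ you argue continuity directly from $\mathcal{M}(\T^d)\hookrightarrow\Sp'(\T^d)$ and the continuity of $\Kop$ on $\Sp'(\T^d)$, avoiding the paper's detour through the Sobolev scale $\mathcal{H}_2^{\tau}(\T^d)$; this is slightly more economical and equally valid.
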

		
		\begin{proof}		
		\textit{Direct sum.}
		Let $f \in \ML(\T^d)$. Then, from \eqref{eq:LopKop}, and since $\Lop f \in \mathcal{M}(\T^d)$ by assumption,
		$$f = \Kop \{ \Lop f \} + \Proj_{\NL} f \in \Kop (\mathcal{M}(\T^d)) + \NL.$$
		Now, let $f = \Kop w + p \in \Kop (\mathcal{M}(\T^d)) + \NL$. Then, $\Lop f = \Lop \Kop w + \Lop p = w - \Proj_{\NL} w$, where we used \eqref{eq:LopKop} again. Since $\NL \subset \Sp(\T^d) \subset \mathcal{M}(\T^d)$, we deduce that $\Lop f = w - \Proj_{\NL} w \in \mathcal{M}(\T^d)$. This shows that $\ML (\T^d) = \Kop ( \mathcal{M} (\T^d) ) + \NL$. 
		
		If now $f \in \Kop ( \mathcal{M} (\T^d) ) \cap \NL$, then $\widehat{f} [\bm{k}] = 0$ for $\bm{k}\in \KL$ because $f = \Kop w$ for some $w$ and $\widehat{f}[\bm{k}] = 0$ for $\bm{k}\notin \KL$ because $\Lop f = 0$. Hence, $f=0$ and the sum $ \Kop ( \mathcal{M} (\T^d) ) \oplus \NL$ is direct. 
		
		\textit{Banach space structure.}		
			Clearly, \eqref{eq:normML} is a semi-norm on $\ML(\T^d)$. Moreover, the relation $\lVert f \rVert_{\ML,p} = 0$ implies that $\Lop f =  \Proj_{\NL} f = 0$, which is equivalent to $f=0$.
Finally, $\Kop ( \mathcal{M} (\T^d) )$ inherits the completeness of $\mathcal{M}(\T^d)$ and the finite-dimensional space $\NL$ is also a Banach space for the $L_2$-norm. Therefore, the direct sum $\ML(\T^d)$ is also a Banach space for the direct sum norm \eqref{eq:normML}. 

	\textit{Embedding relations.}	
	We remark that \eqref{eq:LopKop} implies that $\varphi = \Kop \{ \Lop \varphi \} + \Proj_{\NL} \varphi$ for any $\varphi \in \Sp(\T^d)$. Moreover, $\Lop \varphi \in \Sp(\T^d) \subseteq \mathcal{M}(\T^d)$ because $\Lop$ is continuous from $\Sp(\T^d)$ to itself (as any  LSI  operator continuous from $\Sp'(\T^d)$ to itself) and $\Proj_{\NL} \varphi \in \NL$. This shows that $\Sp(\T^d) \subset \ML (\T^d)$ (set inclusion). 
	The identity is moreover continuous from $\Sp(\T^d)$ to $\ML (\T^d)$. Indeed, if $\varphi_k$ converges to $\varphi$ in $\Sp(\T^d)$, then $\Lop \varphi_k$ converges to $\Lop \varphi$ in $\Sp(\T^d)$, hence in $\mathcal{M}(\T^d)$. Moreover, $\Proj_{\NL} \varphi_k$ also converges to $\Proj_{\NL} \varphi$ in $\Sp(\T^d)$, hence in $L_2(\T^d)$. We then have that $$\lVert \varphi_k - \varphi\rVert_{\ML,p}^p = \lVert \Lop \varphi_k - \Lop \varphi\rVert_{\mathcal{M}}^p + \lVert \Proj_{\NL} \varphi_k - \Proj_{\NL} \varphi \rVert_{2}^p \rightarrow 0,$$ as expected. This demonstrates the embedding $\Sp(\T^d) \subseteq  \ML(\T^d)$. 
		
	For the other embedding, we observe that $f = \Kop w + p \in \ML(\T^d) = \Kop ( \mathcal{M} (\T^d) ) \oplus \NL$ has the Fourier sequence $\widehat{f}[\bm{k}] = \widehat{L^\dagger}[\bm{k}]\widehat{w}[\bm{k}]+ \widehat{p}[\bm{k}]$, which is clearly slowly growing as the sum and product of slowly growing sequences, hence $\ML(\T^d) \subset \Sp'(\T^d)$ (set inclusion).
	Now, there exists some $\tau < 0$ such that $\mathcal{M}(\T^d) \subseteq \mathcal{H}_{2}^{\tau}(\T^d)$, where $\mathcal{H}_{2}^{\tau}(\T^d)$ is the Sobolev space of smoothness $\tau$. Then, $\Kop$ is continuous from $\Sp'(\T^d)$ to itself, and hence there exists $\tau'$ such that $\Kop : \mathcal{H}_{2}^\tau(\T^d) \rightarrow \mathcal{H}_{2}^{\tau'} (\T^d)$ continuously~\cite{Simon2003distributions}. By restriction, $\Kop$ is also continuous from $\mathcal{M}(\T^d)$ to $\mathcal{H}_{2}^{\tau'}(\T^d)$. Then, one has that $\ML(\T^d) \subseteq \mathcal{H}_{2}^{\tau'}(\T^d)$ due to the isometry property between $\mathcal{M}(\T^d)$ and $\ML(\T^d)$. Finally, since $\mathcal{H}_{2}^{\tau'}(\T^d) \subseteq \Sp'(\T^d)$, this concludes the proof. 
	
	\textit{Continuity of $\Lop$.}
	We simply remark that $\lVert \Lop f \rVert_{\mathcal{M}} \leq \lVert f \rVert_{\ML,p}$, implying the continuity of $\Lop$ from $\ML(\T^d)$ to $\mathcal{M}(\T^d)$.
	
	\textit{$\Lop$-splines are in $\ML(\T^d)$.} This simply follows from the fact that a $\Lop$-splines is such that $\Lop f = \sum_{k=1}^K a_k \Sha(\cdot - \bm{x}_k) \in \mathcal{M}(\T^d)$. 
	
	\textit{Norm equivalence.} The equivalence of the norms $\lVert \cdot \rVert_{\ML,p}$ for $p\in [1,\infty]$ simply follows from the equivalence of the $\ell_p$-norms over $\R^2$.
			\end{proof}
	
	\subsection{The Measurement Space of a Spline-admissible Operator} \label{sec:measurementspace}
		
		A linear functional $\nu$ can be used for the linear measurements in \eqref{eq:optipb} under the condition that $\nu(f)$ is well-defined for any $f \in \ML(\T^d)$. Moreover, as we shall see in Section \ref{sec:RT}, to ensure the existence of extreme point solutions of \eqref{eq:optipb}, $\nu$ should be in the dual of $\ML(\T^d)$ when the latter is equipped qith the weak* topology. In this section, we introduce the measurement space $\CL(\T^d)$, specify its Banach space structure and show its relation with the native space $\ML(\T^d)$. 
		We recall that the adjoint of $\Lop  \in  \mathcal{L}_{\mathrm{SI}} (\Sp'(\T^d)) $ is the unique operator $\Lop^*  \in  \mathcal{L}_{\mathrm{SI}} (\Sp'(\T^d))$ such that 
		\begin{equation}
			\langle \Lop \varphi_1, \varphi_2 \rangle = \langle \varphi_1 , \Lop^* \varphi_2\rangle
		\end{equation}
		for every $\varphi_1, \varphi_2 \in \Sp (\T^d)$. The Fourier sequence of $\Lop^*$ is given by $\widehat{L^*}[\bm{k}] = \overline{\widehat{L}[\bm{k}]}$ for $\bm{k}\in \T^d$ and the operators $\Lop$ and  $\Lop^*$ share the same null space $\NL$.
		
		\begin{definition}[Measurement Space]
		We define the \emph{measurement space} associated to the spline-admissible operator $\Lop$ as 
		\begin{equation}\label{eq:CL}
			\CL(\T^d) = \{ g \in \Sp'(\T^d) , \ \Kop^* g \in \mathcal{C}(\T^d) \},
		\end{equation}
		where $\Kop^*$ is the adjoint of $\Kop$.
		\end{definition}
		
		\begin{theorem}[Banach structure of the measurement space] \label{theo:whatisCL}
		Consider a spline-admissible operator $\Lop$ with finite-dimensional null space $\NL$. We also fix $q\in [1,\infty]$. 
		Then, $\CL(\T^d)$ is the direct sum
		\begin{equation}\label{eq:CL}
			\CL(\T^d) = \Lop^* ( \mathcal{C} (\T^d) )  \oplus \NL.
		\end{equation}
		The measurement space $\CL(\T^d)$ is a Banach space for the norm
		\begin{equation} \label{eq:normCL}
			\lVert g \rVert_{\CL,q} =  \left( \lVert \Kop^* g \rVert_{\infty}^q + \lVert \Proj_{\NL} g \rVert_{2}^q \right)^{1/q},
		\end{equation}
		with $q\in [1, +\infty]$ the usual adaptation when $q=\infty$. 
		Moreover, we have the topological embeddings
			\begin{equation}\label{eq:embeddingCL}
			\Sp(\T^d) \subseteq \CL(\T^d) \subseteq \Sp'(\T^d),
			\end{equation}
			and the space $\Sp(\T^d)$ is dense in $\CL(\T^d)$.	
		Finally, the norms $\lVert \cdot \rVert_{\CL,q}$ are all equivalent on $\CL(\T^d)$ for $1 \leq q \leq \infty$. 
	
		\end{theorem}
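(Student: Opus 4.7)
The plan is to mirror the proof of Theorem~\ref{theo:whatisML} almost verbatim, exploiting the duality between the pairs $(\Lop,\Kop)$ and $(\Lop^*,\Kop^*)$. The starting observation is that taking adjoints in Definition~\ref{def:pseudoinverse} shows $\Kop^*$ is the pseudoinverse of $\Lop^*$, and in Fourier space $\widehat{L^*}[\bm{k}]=\overline{\widehat{L}[\bm{k}]}$ so $K_{\Lop^*}=K_{\Lop}$, $N_{\Lop^*}=N_{\Lop}$, and $\mathcal{N}_{\Lop^*}=\NL$. Moreover, since $\Proj_{\NL}$ has a real $\{0,1\}$-valued Fourier sequence, $\Proj_{\NL}^*=\Proj_{\NL}$, and by \eqref{eq:LopKop} applied to $\Lop^*$ one gets $\Proj_{\NL}=\mathrm{Id}-\Lop^*\Kop^*=\mathrm{Id}-\Kop^*\Lop^*$.

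For the direct sum, I would first take $g\in\CL(\T^d)$ and write $g=\Lop^*\Kop^* g+\Proj_{\NL} g$, noting that $\Lop^*(\Kop^* g)\in\Lop^*(\mathcal{C}(\T^d))$ by hypothesis and $\Proj_{\NL} g\in\NL$. Conversely, for $g=\Lop^* h+p$ with $h\in\mathcal{C}(\T^d)$ and $p\in\NL$, compute $\Kop^* g=(\Lop\Kop)^* h+\Kop^* p=\Lop\Kop h=h-\Proj_{\NL} h$, which lies in $\mathcal{C}(\T^d)$ since $\NL\subset\Sp(\T^d)\subset\mathcal{C}(\T^d)$; here $\Kop^* p=0$ because $p\in\NL$ has Fourier support in $N_\Lop$ where $\widehat{K^\dagger}$ vanishes. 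Directness then follows on the Fourier side exactly as in Theorem~\ref{theo:whatisML}: the Fourier coefficients of an element of $\Lop^*(\mathcal{C}(\T^d))$ vanish on $N_\Lop$ while those of an element of $\NL$ vanish on $\KL$, so the intersection is $\{0\}$. The Banach property of $\lVert\cdot\rVert_{\CL,q}$ is then automatic: $\Lop^*(\mathcal{C}(\T^d))$ inherits completeness from $\mathcal{C}(\T^d)$ via the isometry $g\mapsto\Kop^* g=h-\Proj_{\NL}h$, $\NL$ is finite-dimensional, and the direct-sum norm is complete; equivalence of the $\lVert\cdot\rVert_{\CL,q}$ reduces to equivalence of $\ell_q$-norms on $\R^2$.

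For the embeddings, $\Sp(\T^d)\subseteq\CL(\T^d)$ is immediate since $\Kop^*$ is continuous on $\Sp(\T^d)$, and convergence $\varphi_k\to\varphi$ in $\Sp(\T^d)$ forces convergence of $\Kop^*\varphi_k$ in $\mathcal{C}(\T^d)$ and of $\Proj_{\NL}\varphi_k$ in $L_2$, yielding continuity in the $\CL$-norm. For $\CL(\T^d)\subseteq\Sp'(\T^d)$, I would argue via a Sobolev intermediary exactly as in Theorem~\ref{theo:whatisML}: pick $\tau<-d/2$ so that $\mathcal{C}(\T^d)\subseteq\mathcal{H}_2^\tau(\T^d)$; since $\Lop^*\in\mathcal{L}_{\mathrm{SI}}(\Sp'(\T^d))$ is continuous between suitable Sobolev scales, $\Lop^*(\mathcal{C}(\T^d))$ embeds continuously into some $\mathcal{H}_2^{\tau'}(\T^d)\subseteq\Sp'(\T^d)$, and adding the finite-dimensional $\NL$ preserves this.

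The only genuinely new point compared to Theorem~\ref{theo:whatisML} is the density of $\Sp(\T^d)$ in $\CL(\T^d)$, which I expect to be the main (albeit mild) obstacle since the analogous claim was not made for $\ML(\T^d)$ (where $\mathcal{M}(\T^d)$ does not have $\Sp(\T^d)$ as a norm-dense subspace). Here it works because $\Sp(\T^d)$ is dense in $\mathcal{C}(\T^d)$ for $\lVert\cdot\rVert_\infty$. Given $g=\Lop^* h+p\in\CL(\T^d)$, choose $\varphi_k\in\Sp(\T^d)$ with $\varphi_k\to h$ uniformly and set $g_k:=\Lop^*\varphi_k+p\in\Sp(\T^d)$. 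Then $\Kop^* g_k=\varphi_k-\Proj_{\NL}\varphi_k\to h-\Proj_{\NL}h=\Kop^* g$ in $\mathcal{C}(\T^d)$, while $\Proj_{\NL} g_k=p=\Proj_{\NL} g$, so $\lVert g_k-g\rVert_{\CL,q}\to 0$, completing the proof.
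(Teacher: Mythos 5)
Your proposal is correct and follows essentially the same route as the paper: the paper likewise disposes of the direct sum, Banach structure, and embeddings by invoking the arguments of Theorem~\ref{theo:whatisML} applied to the pair $(\Lop^*,\Kop^*)$, and proves density exactly as you do, by approximating $h$ uniformly with Schwartz functions $\varphi_k$ and setting $\psi_k=\Lop^*\varphi_k+p$, using $\Kop^*\Lop^*=\mathrm{Id}-\Proj_{\NL}$ to control $\lVert g-\psi_k\rVert_{\CL,q}$ by a constant times $\lVert h-\varphi_k\rVert_\infty$. Your added observations (that $\Kop^*$ is the pseudoinverse of $\Lop^*$, that $\Kop^*p=0$ for $p\in\NL$, and the Fourier-support argument for directness) are all consistent with what the paper leaves implicit.
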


		\begin{proof}
		The direct sum \eqref{eq:CL}, the Banach structure with norm \eqref{eq:normCL} (remembering that $\lVert \cdot \rVert_\infty$ is a Banach norm on $\mathcal{C}(\T^d)$), and the embeddings \eqref{eq:embeddingCL} are obtained with identical arguments than in Theorem \ref{theo:whatisML}. 
		
		The only remaining part is the denseness of $\Sp(\T^d)$ in $\CL(\T^d)$. Let $g  = \Lop^* h + p \in \CL(\T^d)$, with $h \in \mathcal{C}(\T^d)$ and $p \in \NL$. The space $\Sp(\T^d)$ is dense in $ \mathcal{C}(\T^d)$, hence there exists a sequence $(\varphi_k)_{n\geq 1}$ of functions in $\Sp(\T^d)$ such that $\lVert h - \varphi_k \rVert_\infty \rightarrow 0$ when $k\rightarrow \infty$. We set $\psi_k = \Lop^* \varphi_k + p$. Then, $p \in \NL \subset \Sp(\T^d)$ and $\Lop^*$, as for any operator in $\mathcal{L}_{\mathrm{SI}} (\Sp'(\T^d))$,  is continuous from $\Sp(\T^d)$ to itself. We therefore have that $\psi_k \in \Sp(\T^d)$ for any $k \geq 1$. 
		Moreover, $g - \psi_k = \Lop^* \{ h - \varphi_k\} \in \CL(\T^d)$ and we have that
		\begin{equation} \label{eq:gminuspsi} 
		\lVert g - \psi_k \rVert_{\CL, q} = \lVert \Kop^* \Lop^* (h-\varphi_k) \rVert_\infty =  \lVert (\mathrm{I} - \mathrm{Proj}_{\NL})^*  (h-\varphi_k) \rVert_\infty = \lVert (\mathrm{I} - \mathrm{Proj}_{\NL})  (h-\varphi_k) \rVert_\infty ,
		\end{equation}
		where we used \eqref{eq:LopKop} and $\mathrm{Proj}_{\NL} = \mathrm{Proj}_{\NL}^*$. 
		If $f \in \mathcal{C}(\T^d)$, then for any $\bm{k}\in \Z^d$, $\lvert \widehat{f} [\bm{k}] \rvert \leq \lVert f \rVert_\infty$. Applied to $f = h - \varphi_k$ and denoting by $\bm{k}_1, \ldots , \bm{k}_{N_0}$ the frequencies of the finite-dimensional null space of $\Lop$ (see Proposition~\ref{prop:finitedimNL}), we deduce from \eqref{eq:gminuspsi} that
		\begin{equation}
		\lVert g - \psi_k \rVert_{\CL,q} \leq \lVert h - \varphi_k \rVert_\infty + \sum_{n=1}^{N_0} \lvert \widehat{(h -\varphi_k)}[\bm{k}_n]\rvert \leq (1+N_0)  \lVert h - \varphi_k \rVert_\infty.
		\end{equation}
		Since $ \lVert h - \varphi_k \rVert_\infty \rightarrow 0$, we deduce that $\lVert g - \psi_k \rVert_{\CL,q}$ vanishes and the denseness is proved. Finally, the equivalence of the norms $\lVert \cdot \rVert_{\CL,q}$ for $q\in [1,\infty]$ simply follows from the equivalence of the $\ell_q$-norms over $\R^2$.
		\end{proof}
		
		\begin{theorem}[Generalized Riesz-Markov Representation Theorem] 
		\label{theo:RieszMarkovgeneralized}
		Let $\Lop$ be a spline-admissible operator and $1 \leq p, q \leq \infty$ such that $1/ p + 1/q = 1$. 
		The topological dual of the Banach space $( \CL(\T^d) , \lVert \cdot \rVert_{\CL,q} )$  is isometric to the native space $(\ML(\T^d) ,\lVert \cdot \rVert_{\ML,p} ))$; that is,
		\begin{equation} \label{eq:dualidentification}
			(\CL'(\T^d), \lVert \cdot \rVert_{\CL',q}) = (\ML(\T^d), \lVert \cdot \rVert_{\ML,p}).
		\end{equation} 		
		Moreover, the topological dual of $\ML(\T^d)$ endowed with the weak* topology inherited from $\CL(\T^d)$ is $\CL(\T^d)$ itself. Hence, $(\CL(\T^d),\ML(\T^d))$ is a dual pair. 
		\end{theorem}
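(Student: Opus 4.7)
The plan is to identify $\CL'(\T^d)$ with $\ML(\T^d)$ via the natural bilinear bracket extending the Schwartz duality to the direct-sum decompositions of Theorems~\ref{theo:whatisML} and~\ref{theo:whatisCL}. The three essential ingredients are the classical Riesz--Markov isometry $\mathcal{C}(\T^d)' = \mathcal{M}(\T^d)$, the finite-dimensional self-duality of $\NL$, and the pseudoinverse identities $\Lop \Kop = \Kop \Lop = \mathrm{Id} - \Proj_{\NL}$ from~\eqref{eq:LopKop}.

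I would first introduce the bilinear form $B : \ML(\T^d) \times \CL(\T^d) \to \R$ defined by
\begin{equation*}
B(f, g) := \langle \Lop f,\, \Kop^* g\rangle_{\mathcal{M}, \mathcal{C}} + \langle \Proj_{\NL} f,\, \Proj_{\NL} g\rangle_{L_2},
\end{equation*}
which is well-posed since $\Lop f \in \mathcal{M}(\T^d)$ and $\Kop^* g \in \mathcal{C}(\T^d)$ by Theorems~\ref{theo:whatisML} and~\ref{theo:whatisCL}, while the projections land in the finite-dimensional space $\NL \subseteq \Sp(\T^d)$. Using $\Lop \Kop = \mathrm{Id} - \Proj_{\NL}$, one checks that $B(f, \varphi) = \langle f, \varphi\rangle_{\Sp', \Sp}$ for every test function $\varphi \in \Sp(\T^d) \subseteq \CL(\T^d)$, so $B$ is the unique continuous extension of the Schwartz duality to $\ML \times \CL$. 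The chained estimate
\begin{equation*}
|B(f, g)| \leq \|\Lop f\|_{\mathcal{M}}\, \|\Kop^* g\|_\infty + \|\Proj_{\NL} f\|_2\, \|\Proj_{\NL} g\|_2 \leq \|f\|_{\ML, p}\, \|g\|_{\CL, q},
\end{equation*}
combining Riesz--Markov, Cauchy--Schwarz on $\NL$, and Hölder's inequality in $\R^2$, shows that the evaluation map $\Phi: f \mapsto B(f, \cdot)$ is a contraction from $\ML(\T^d)$ into $\CL'(\T^d)$.

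Next, I would prove that $\Phi$ is a bijective isometry. Injectivity is immediate: if $\Phi(f) = 0$, then $\langle f, \varphi\rangle_{\Sp', \Sp} = 0$ for every $\varphi \in \Sp(\T^d)$, which forces $f = 0$ in $\Sp'(\T^d)$ and hence in $\ML(\T^d)$ via~\eqref{eq:embeddingML}. For surjectivity, given $\ell \in \CL'$, its restriction to the finite-dimensional summand $\NL$ is represented by a unique $p_f \in \NL$ through $\ell(p) = \langle p_f, p\rangle_{L_2}$. For the other summand, the continuity $\|\Lop^* h\|_{\CL, q} \leq (1+N_0)\|h\|_\infty$ established inside the proof of Theorem~\ref{theo:whatisCL} makes $h \mapsto \ell(\Lop^* h)$ a bounded linear functional on $(\mathcal{C}(\T^d), \|\cdot\|_\infty)$, and the classical Riesz--Markov theorem then provides a Radon measure $w \in \mathcal{M}(\T^d)$ with $\ell(\Lop^* h) = \langle w, h\rangle$. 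Setting $f := \Kop w + p_f \in \ML(\T^d)$, the identities $\Lop f = (\mathrm{Id} - \Proj_{\NL}) w$, $\Proj_{\NL} f = p_f$, and $\Kop^* \Lop^* = \mathrm{Id} - \Proj_{\NL}$ allow a direct verification that $B(f, g) = \ell(g)$ for every $g \in \CL(\T^d)$. The norm identity $\|\Phi(f)\|_{\CL', q} = \|f\|_{\ML, p}$ is then obtained by saturating each factor of the Hölder estimate separately: the finite-dimensional Riesz theorem yields extremal directions in $\NL$, while approximately-signed continuous test functions lying in the image of $\Kop^*$ saturate the Riesz--Markov pairing.

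Finally, $(\CL(\T^d), \ML(\T^d))$ is exhibited as a dual pair by transporting structures along the isometric isomorphism $\Phi$: the weak* topology $\sigma(\ML, \CL)$ becomes the classical Banach weak* topology $\sigma(\CL', \CL)$, whose continuous dual is $\CL(\T^d)$ itself by the standard theorem on weak* duals of Banach spaces. The main technical hurdle lies in the surjectivity step, where the Riesz--Markov application must be placed on the correct codomain and the reconstructed $f$ must be shown to realize $\ell$ on every $g \in \CL(\T^d)$; this demands careful bookkeeping with the pseudoinverse and projection identities across both summands of the direct-sum decomposition of Theorem~\ref{theo:whatisCL}.
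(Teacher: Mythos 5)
Your construction is, in substance, the same route as the paper's: the same direct-sum decompositions of $\ML(\T^d)$ and $\CL(\T^d)$, the classical Riesz--Markov theorem on the $\mathcal{C}$/$\mathcal{M}$ component, self-duality of the finite-dimensional summand $\NL$, and $\ell_p$--$\ell_q$ duality to glue the two pieces. You simply unfold the paper's abstract Lemma~\ref{lemma:XYZ} into an explicit bracket $B$, and your surjectivity argument is a correct, and in fact more detailed, concretization of the paper's adjoint-of-an-isometry step.

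The genuine gap is in the isometry claim. The only elements of $\CL(\T^d)$ available for testing are $g=\Lop^*h+p$, for which $\Kop^* g=(\Lop\Kop)^*h=h-\Proj_{\NL}h$ ranges exactly over $\Proj_{\NL^\perp}(\mathcal{C}(\T^d))$. After decoupling the two summands, the first component of $\lVert \Phi(f)\rVert_{\CL',q}$ is therefore
\begin{equation*}
\sup\bigl\{\langle \Lop f, h'\rangle \,:\, h'\in\mathcal{C}(\T^d),\ \Proj_{\NL}h'=0,\ \lVert h'\rVert_\infty\le 1\bigr\},
\end{equation*}
and by the Hahn--Banach duality between a subspace and the quotient by its annihilator, this supremum equals $\inf_{p\in\NL}\lVert \Lop f-p\rVert_{\mathcal M}$, the TV-distance from $\Lop f$ to $\NL$ --- not $\lVert\Lop f\rVert_{\mathcal M}$ itself. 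These can differ even though $\Proj_{\NL}(\Lop f)=0$: for $\Lop=\Dop$ in $d=1$, take $\Lop f=g$ with $g$ equal to $-1$ on three quarters of the circle and $+3$ on the remaining quarter; then $g$ has zero mean, yet $\inf_{c\in\R}\lVert g-c\rVert_{1}$ is attained at the median $c=-1$ and is strictly smaller than $\lVert g\rVert_{1}$. The ``approximately-signed'' test function $h\approx\mathrm{sign}(g)$ that would saturate the Riesz--Markov pairing has nonzero mean, and removing that mean changes $\lVert h\rVert_\infty$, so the saturation you invoke does not go through on the constrained set. To be fair, the paper's own proof buries exactly the same difficulty in the unproved assertion \eqref{eq:dualonprojnlprop} that the classical duality ``is transmitted to the projections'' of $\mathcal{C}(\T^d)$ and $\mathcal{M}(\T^d)$. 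But as written, your argument (like the paper's) establishes a continuous bijection and a one-sided norm inequality, i.e.\ an isomorphism with equivalent norms, not the claimed isometry; this step needs either a genuine justification or a weakening of the statement.
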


		The relation \eqref{eq:dualidentification} is a representation theorem, since it identifies the topological dual of the measurement space as being the native space $\ML(\T^d)$. It is therefore a generalization of  the Riesz-Markov representation theorem, stating that $(\mathcal{C}'(\T^d) , \lVert \cdot \rVert_{\mathcal{C}'} ) = (\mathcal{M}(\T^d), \lVert \cdot \rVert_{\mathcal{M}})$ (isometric identification). The proof of Theorem \ref{theo:RieszMarkovgeneralized} is based on the following lemma, which recalls elementary results on Banach spaces.
		
		\begin{lemma} \label{lemma:XYZ}
			Let $\mathcal{X}, \mathcal{Y}, \mathcal{Z}$ be three Banach spaces with norm $\lVert \cdot \rVert_{\mathcal{X}}, \lVert \cdot \rVert_{\mathcal{Y}},\lVert \cdot \rVert_{\mathcal{Z}}$, respectively. Their topological duals $\mathcal{X}', \mathcal{Y}', \mathcal{Z}'$ are Banach spaces for their dual norms, denoted respectively by $\lVert \cdot \rVert_{\mathcal{X}'}, \lVert \cdot \rVert_{\mathcal{Y}'},\lVert \cdot \rVert_{\mathcal{Z}'}$.
			Then, the following statements hold.
			\begin{itemize}
				\item For any $p \in [1,\infty]$, The product space $\mathcal{X}\times \mathcal{Y}$ is a Banach space for the norm, defined for any $(x,y) \in \mathcal{X}\times \mathcal{Y}$
				\begin{equation} 
					\lVert (x,y) \rVert_{\mathcal{X}\times \mathcal{Y}} = \left( \lVert x \rVert_{\mathcal{X}}^p + \lVert y \rVert_{\mathcal{Y}}^p\right)^{1/p},
				\end{equation}
				with the usual adaptation for $p = \infty$.
				\item The topological dual $(\mathcal{X}\times \mathcal{Y})'$ of $\mathcal{X}\times \mathcal{Y}$ associated with the dual norm is isometric to the Banach space $\mathcal{X}' \times \mathcal{Y}'$ endowed with the norm defined for $(x',y') \in \mathcal{X}' \times \mathcal{Y}'$ by
				\begin{equation} \label{eq:dualproduct}
					\lVert (x',y') \rVert_{\mathcal{X}'\times \mathcal{Y}'} = \left( \lVert x' \rVert_{\mathcal{X}'}^q + \lVert y' \rVert_{\mathcal{Y}'}^q\right)^{1/q},
				\end{equation}
				where $q \in [1,\infty]$ is the conjugate of $p$ satisfying $\frac{1}{p} + \frac{1}{q} = 1$, with the usual adaptation for $q=\infty$ (\emph{i.e.}, $p=1$) in \eqref{eq:dualproduct}.
				\item Assume that $\Phi : \mathcal{X} \rightarrow \mathcal{Z}$ is an isometry. Then, the adjoint $\Phi^*$ of $\Phi$ is an isometry between $\mathcal{Z}'$ and $\mathcal{X}'$ endowed with their dual norms, and we have
				\begin{equation} \label{eq:ZXPhi}
					\mathcal{Z}' = (\Phi^*)^{-1} \mathcal{X}'. 
				\end{equation}
			\end{itemize}
		\end{lemma}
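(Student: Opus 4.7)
\textbf{Proof plan for Lemma \ref{lemma:XYZ}.} The lemma collects three standard facts about Banach spaces, product structures, and adjoints. My plan is to treat them sequentially, with the real work concentrated in the second item; the first is routine and the third reduces quickly to the definition of the adjoint once one has the right bookkeeping. Throughout I will use Hölder's inequality on $\R^2$ to handle the pairing between the $\ell^p$ product norm and its $\ell^q$ conjugate.

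For the first bullet, I would check the norm axioms of $\lVert (x,y)\rVert_{\mathcal{X}\times\mathcal{Y}}$ by reducing them to those of $\lVert \cdot\rVert_{\mathcal{X}}$ and $\lVert\cdot\rVert_{\mathcal{Y}}$ and invoking Minkowski's inequality on $\R^2$ for the triangle inequality (with the standard $p=\infty$ adaptation). For completeness, I would note that any Cauchy sequence $(x_n,y_n)$ has coordinates that are Cauchy in $\mathcal{X}$ and $\mathcal{Y}$ respectively, since the coordinate norms are bounded by the product norm; the coordinate limits $x,y$ then yield convergence of $(x_n,y_n)$ to $(x,y)$ in the product norm by the elementary inequality $\lVert(x_n-x, y_n-y)\rVert\leq \lVert x_n-x\rVert_\mathcal{X}+\lVert y_n-y\rVert_\mathcal{Y}$ (again with the obvious $p=\infty$ adaptation).

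For the second bullet, I would define the linear map $\Psi : (\mathcal{X}\times\mathcal{Y})' \to \mathcal{X}'\times\mathcal{Y}'$ by $\Psi(L) = (L(\cdot,0), L(0,\cdot))$, with inverse $(x',y') \mapsto ((x,y)\mapsto x'(x)+y'(y))$; bijectivity is immediate from the decomposition $L(x,y)=L(x,0)+L(0,y)$. The work lies in establishing the isometry. The upper bound $\lVert L\rVert \leq (\lVert x'\rVert_{\mathcal{X}'}^q+\lVert y'\rVert_{\mathcal{Y}'}^q)^{1/q}$ follows directly from Hölder's inequality in $\R^2$ applied to $|L(x,y)|\leq \lVert x'\rVert_{\mathcal{X}'}\lVert x\rVert_\mathcal{X}+\lVert y'\rVert_{\mathcal{Y}'}\lVert y\rVert_\mathcal{Y}$. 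For the matching lower bound, I would pick, for any $\varepsilon>0$, unit-norm near-maximizers $x_0\in\mathcal{X}$ and $y_0\in\mathcal{Y}$ with $x'(x_0)\geq \lVert x'\rVert_{\mathcal{X}'}-\varepsilon$ and $y'(y_0)\geq \lVert y'\rVert_{\mathcal{Y}'}-\varepsilon$, then test $L$ at $(tx_0,sy_0)$ for the scalars $t,s\geq 0$ that achieve equality in the finite-dimensional Hölder inequality (i.e.\ $t^{p-1}\propto \lVert x'\rVert_{\mathcal{X}'}$ and $s^{p-1}\propto \lVert y'\rVert_{\mathcal{Y}'}$, normalized so $t^p+s^p=1$), letting $\varepsilon\to 0$. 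The main subtlety here will be handling the boundary cases $p=1$ and $p=\infty$ by taking the max-coordinate form of the test pair; I expect this to be the most delicate step.

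For the third bullet, since $\Phi$ is an isometric isomorphism (surjectivity being implicit in the concluding identity $\mathcal{Z}'=(\Phi^*)^{-1}\mathcal{X}'$), the adjoint $\Phi^*:\mathcal{Z}'\to\mathcal{X}'$ defined by $\langle \Phi^* z',x\rangle=\langle z',\Phi x\rangle$ is well-defined, linear, and satisfies
\begin{equation}
\lVert \Phi^* z'\rVert_{\mathcal{X}'} = \sup_{\lVert x\rVert_\mathcal{X}=1}|z'(\Phi x)| = \sup_{\lVert z\rVert_\mathcal{Z}=1}|z'(z)| = \lVert z'\rVert_{\mathcal{Z}'},
\end{equation}
where the middle equality uses that $\Phi$ sends the unit sphere of $\mathcal{X}$ onto that of $\mathcal{Z}$. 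Finally, $\Phi^{-1}$ is itself an isometry, and a direct check gives $(\Phi^{-1})^*=(\Phi^*)^{-1}$, so $\Phi^*$ is a bijective isometry and $\mathcal{Z}'=(\Phi^*)^{-1}\mathcal{X}'$ as an isometric identification, which is exactly what is needed in the proof of Theorem \ref{theo:RieszMarkovgeneralized} to transfer duality from $\mathcal{C}(\T^d)\times (\NL,\lVert\cdot\rVert_2)$ to $\CL(\T^d)$ through the isomorphism induced by $\Kop^*\oplus\Proj_{\NL}$.
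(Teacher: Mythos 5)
Your proposal is correct and matches the route the paper intends: the paper declares Lemma \ref{lemma:XYZ} elementary and leaves it to the reader, noting only that \eqref{eq:dualproduct} follows from the duality $(\R^2,\lVert\cdot\rVert_p)'=(\R^2,\lVert\cdot\rVert_q)$, which is precisely the H\"older-plus-equality-case argument you carry out for the second bullet. Your remark that surjectivity of $\Phi$ must be read into the third bullet (otherwise $\Phi^*$ can fail to be injective) is a fair and correct observation, and it is indeed satisfied in the paper's application to $\Lop^*$ in the proof of Theorem \ref{theo:RieszMarkovgeneralized}.
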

		
		Lemma \ref{lemma:XYZ} is elementary and let to the reader. Note that the relation \eqref{eq:dualproduct} simply uses that the topological dual of $(\R^2, \lVert \cdot \rVert_p)$ is $(\R^2, \lVert \cdot \rVert_q)$. 
		
		
		\begin{proof}[Proof of Theorem \ref{theo:RieszMarkovgeneralized}]

		We recall that the projector $\mathrm{Proj}_{\NL}$ is defined in \eqref{projdef}, and set $\mathrm{Proj}_{\NL^\perp} = \mathrm{Id} - \mathrm{Proj}_{\NL}$. Then, the spaces $\mathrm{Proj}_{\NL^\perp} (\mathcal{C}(\T^d))$ and $\mathrm{Proj}_{\NL^\perp} (\mathcal{M}(\T^d))$
inherit the Banach space structures of $\mathcal{C}(\T^d)$ and $\mathcal{M}(\T^d)$ for the restriction of the norms $\lVert \cdot \rVert_\infty$ and $\lVert \cdot \rVert_{\mathcal{M}}$, respectively. 
	According to the Riesz-Markov theorem, the space $\mathcal{M}(\T^d)$ is isometric to the topological dual of $(\mathcal{C}(\T^d), \lVert \cdot \rVert_\infty)$ endowed with the dual norm, and this property is transmit to the projections of those spaces,  which implies the isometric identification 
	\begin{equation}\label{eq:dualonprojnlprop}
	( (\mathrm{Proj}_{\NL^\perp} (\mathcal{C}(\T^d)))' = \mathrm{Proj}_{\NL^\perp} (\mathcal{M}(\T^d)).
	\end{equation}
	Due to Theorems  \ref{theo:whatisML} and  \ref{theo:whatisCL}, we have that
	\begin{align}
		\CL(\T^d) &=  \Lop^* ( \mathrm{Proj}_{\NL^\perp} (\mathcal{C} (\T^d) ) )  \oplus \NL \quad \text{ and }  \quad 
		\ML(\T^d) =  \Lop^\dagger ( \mathrm{Proj}_{\NL^\perp} (\mathcal{M} (\T^d) ) )  \oplus \NL.		
	\end{align}
	Moreover, the operator $\Lop^*$ is an isometry between $\mathrm{Proj}_{\NL^\perp} (\mathcal{C} (\T^d) )$ and $\mathrm{Proj}_{\NL^\perp} (\CL(\T^d) )$, and its inverse's adjoint is $  \Kop$. 
	This can be easily seen from the definition of the norm \eqref{eq:normCL} and the fact that $\mathrm{Proj}_{\NL^\perp}$ simply set to zero the Fourier coefficients associated to the finitely many null space frequencies. Thanks to \eqref{eq:ZXPhi} in Lemma \ref{lemma:XYZ}, we therefore deduce the isometric identification $\left( \Lop^* ( \mathrm{Proj}_{\NL^\perp} (\mathcal{C} (\T^d) ) ) \right)' =  \Lop^\dagger ( \mathrm{Proj}_{\NL^\perp} (\mathcal{M} (\T^d) ) )$. We observe moreover that $ \Lop^*   \mathrm{Proj}_{\NL^\perp} = \Lop^*$ and $ \Lop^\dagger  \mathrm{Proj}_{\NL^\perp} = \Kop$, implying that 
	\begin{equation}
		\left( \Lop^*    (\mathcal{C} (\T^d)  ) \right)' =  \Lop^\dagger (  \mathcal{M} (\T^d)  ).
	\end{equation}
	The finite-dimensional space $\NL$ has an orthonormal basis for the $L_2$ scalar product, given by $\{e_{\bm{k}_n}, n=1 \ldots N_0\}$, with $\bm{k}_1, \ldots, \bm{k}_{N_0}$ the null space frequencies of $\Lop$. Hence, $\NL'$ is isometrically identified to $\NL$. Applying \eqref{eq:dualproduct}, we deduce \eqref{eq:dualidentification}. The second statement of Theorem \ref{theo:RieszMarkovgeneralized} then follows directly follows.
		\end{proof}
		
%
		
	\textbf{Running example $\Lop = \Dop^N$, $N\geq 1$.} 
	The native space of $\Dop^N$ is the space of functions $f$ such that $\Dop^N f \in \mathcal{M}(\T^d)$. According to Theorem \ref{theo:whatisML}, a function of this space can be written as 
$f = (\Dop^N)^\dagger w + \alpha $ 	with $w \in \mathcal{M}(\T^d)$ and $\alpha \in \R$.
	The norm of $f$ is then $\lVert f \rVert_{\mathcal{M}_{\Dop^N},p} = \left( \lVert w \rVert_{\mathcal{M}}^p + \alpha^p \right)^{1/p}$. 
	
	A function $g$ of the measurement space $\mathcal{C}_{\Dop^N}(\T^d)$ is of the form $g = \Dop^N \{ h \} + \beta$ with $h \in \mathcal{C}(\T^d)$ and $\beta \in \R$. Note that the adjoint of $\Dop^N$ is $(-1)^N \Dop^N$ but the constant $(-1)^N$ can be absorbed in $h$. The norm of $g$ is then $\lVert g \rVert_{\mathcal{C}_{\Dop^N, q}} = \left( \lVert \varphi \rVert_{\infty}^q + \beta^q \right)^{1/q}$.

	\section{Periodic Representer Theorem} \label{sec:RT}
			
		Assume that we want to reconstruct an unknown periodic function $f_0$ from its possibly noisy linear measurements $\bm{y} \approx  \bm{\nu}(f_0)\in\R^M$. Typically, $\bm{y}$ is a random perturbation of $\bm{\nu}(f_0)$ such that $\mathbb{E} [\bm{y} ] = \bm{\nu} ( f_0 )$. This mild assumption allows to consider additive noise models, but also more general ones. We shall not discuss further the model for the data acquisition on this paper and we refer \cite[Chapter 7.5]{simeoni2020functional} for more details on this topic.
		To achieve our reconstruction goal, we consider the penalized optimization problem:
		\begin{equation} \label{eq:theoptibeforeRT}
		\tilde{f}\in{\arg \min_f}  \quad  E(\bm{y}, \bm{\nu} (f ) )  +   \lambda \lVert \Lop f \rVert_{\mathcal{M}}  ,
		\end{equation}
		where $E(\bm{y}, \bm{\nu} (f ) )$ is a data-fidelity term that enforces the fidelity of the predicted measurements $\bm{\nu} (f )$ to the observed one $\bm{y}$. A typical choice for $E$  is the quadratic cost function $E(\bm{y}, \bm{\nu} (f ) ) = \lVert \bm{y} - \bm{\nu}(f) \rVert^2$.
		The regularization $ \lVert \Lop f \rVert_{\mathcal{M}}$ promotes functions with certain smoothness properties.
		The spline-admissible operator $\Lop$ typically characterizes the smoothness of the reconstruction from \eqref{eq:theoptibeforeRT}, while the choice the $\mathcal{M}$-norm is know to promote sparse reconstructions~\cite{Unser2017splines,gupta2018continuous}. 
		An optimizer $\tilde{f}$ of \eqref{eq:theoptibeforeRT} is expected to adequately approximate the function $f_0$.

		In Section \ref{sec:constructionNativeSpace}, we have introduced the function spaces required to give a clear meaning to \eqref{eq:theoptibeforeRT}. The functions $f$ should typically live in the native space $\ML(\T^d)$, for which $\lVert \Lop f \rVert_{\mathcal{M}} < \infty$, while the measurement functionals $\nu_m$ are taken in  the measurement space $\CL(\T^d)$. 
		
		\begin{theorem}[Periodic Representer Theorem] \label{theo:RT}
			Consider the following assumptions: 
			\begin{itemize}
				\item a spline-admissible operator $\Lop$ with null space $\NL$ of finite dimension $N_0 \geq 0$ and pseudoinverse $\Kop$;
				\item a linearly independent family of $M \geq N_0$ linear functionals $\nu_m \in \CL(\T^d)$ such that $\bm{\nu}= (\nu_1,\ldots , \nu_M)$ is injective on the null space of $\Lop$; that is,  such that the condition $\langle \bm{\nu}, p \rangle = \bm{0}$ for  $p\in \NL$ implies that $p = 0$;
				\item a cost function $E(\cdot, \cdot) : \R^M\times\R^M \rightarrow \R^+ \cup \{\infty\}$ such that $E(\bm{z}, \cdot)$ is a  lower semi-continuous,  convex, coercive, and proper function for any fixed $\bm{z} \in \R^M$;
				\item a measurement vector $\bm{y}\in \R^M$; and
				\item a tuning parameter $\lambda > 0$.
			\end{itemize}
			Then, the set of minimisers
			\begin{equation}\label{eq:optiwellstated}
			\mathcal{V} := \underset{f \in \ML(\T^d) }{\arg \min}  E(\bm{y}, \bm{\nu} (f ) )  +   \lambda \lVert \Lop f \rVert_{\mathcal{M}}  
			\end{equation}
			is non empty, convex, and compact with respect to the weak* topolgy on $\ML(\T^d)$.
			Moreover, the extreme points of  \eqref{eq:optiwellstated} are  periodic $\Lop$-splines of the form
			\begin{equation} \label{eq:LopfRT}
				  f_{\mathrm{ext}} = \sum_{k=1}^K a_k \Kop \{ \Sha\} ( \cdot -\bm{x}_k) + p = \sum_{k=1}^K a_k g_{\Lop} ( \cdot -\bm{x}_k) + p
			\end{equation}
			for some $\bm{x}_k \in \T^d$, $a_k \in \R$ with $\bm{\mathrm{M}} \bm{a} = \bm{0}$ where $\bm{\mathrm{M}}$ is given by \eqref{eq:matrixM},  $K \leq M$ knots, and $p \in \NL$.
		\end{theorem}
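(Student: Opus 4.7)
The plan proceeds in two stages: establish the topological properties of $\mathcal{V}$ (non-emptiness, convexity, weak*-compactness) via the direct method, and then characterize its extreme points by reducing the problem to an extreme point analysis of a constrained TV-ball in $\mathcal{M}(\T^d)$.

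For the first stage, I would view $\ML(\T^d)$ as the dual of $\CL(\T^d)$ (Theorem~\ref{theo:RieszMarkovgeneralized}) and check that the objective $J(f) := E(\bm{y}, \bm{\nu}(f)) + \lambda \lVert \Lop f \rVert_{\mathcal{M}}$ is proper, convex, coercive, and weak*-lower semicontinuous on $\ML(\T^d)$. Convexity is immediate. Weak*-lower semicontinuity follows because each $\nu_m \in \CL(\T^d) = \ML(\T^d)'$ induces a weak*-continuous linear form $f \mapsto \langle \nu_m, f\rangle$, $E(\bm{y},\cdot)$ is lsc on $\R^M$, and the total variation norm, being a dual norm on $\Kop(\mathcal{M}(\T^d))$, is weak*-lsc. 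For coercivity, consider a sequence with $\lVert f_n \rVert_{\ML,p}\to\infty$: if $\lVert \Lop f_n\rVert_{\mathcal{M}}$ remains bounded the second term $\lVert \Proj_{\NL} f_n\rVert_2$ must diverge, and the injectivity of $\bm{\nu}$ on the finite-dimensional space $\NL$ together with the continuity of $\bm{\nu}$ on $\ML(\T^d)$ then forces $\lVert \bm{\nu}(f_n)\rVert \to\infty$, making $E(\bm{y},\bm{\nu}(f_n))\to\infty$. The sublevel sets of $J$ are therefore weak*-closed and norm-bounded in $\ML(\T^d)$, hence weak*-compact by Banach-Alaoglu. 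Existence is obtained by a standard weak*-lsc argument, and $\mathcal{V}$ is convex and weak*-compact.

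For the second stage, the Krein-Milman theorem applied to the weak*-compact convex set $\mathcal{V}$ yields extreme points. Let $f_{\mathrm{ext}} \in \mathcal{V}$ be extreme, write $f_{\mathrm{ext}} = \Kop w_{\mathrm{ext}} + p_{\mathrm{ext}}$ with $w_{\mathrm{ext}} = \Lop f_{\mathrm{ext}} \in \mathcal{M}(\T^d)$ and $p_{\mathrm{ext}} \in \NL$, and set $\bm{z} := \bm{\nu}(f_{\mathrm{ext}})$, $c := \lVert w_{\mathrm{ext}}\rVert_{\mathcal{M}}$. Convexity of $E$ and of the TV-norm implies that the face
\begin{equation*}
\mathcal{V}_{\bm{z},c} := \{ f = \Kop w + p \in \mathcal{V} : \bm{\nu}(f) = \bm{z},\ \lVert w\rVert_{\mathcal{M}} \le c\}
\end{equation*}
also contains $f_{\mathrm{ext}}$ as an extreme point. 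By injectivity of $\bm{\nu}$ on $\NL$ and $\dim \NL = N_0 \le M$, the null-space component $p$ is an affine function $p(w)$ of $w$ uniquely determined by $\bm{\nu}(\Kop w + p) = \bm{z}$. Writing $\langle \nu_m, \Kop w\rangle = \langle \Kop^* \nu_m, w\rangle$ with $\Kop^*\nu_m \in \mathcal{C}(\T^d)$ (by definition of $\CL(\T^d)$), the extreme points of $\mathcal{V}_{\bm{z},c}$ are in bijection with the extreme points of
\begin{equation*}
B := \left\{ w \in \mathcal{M}(\T^d):\ \lVert w\rVert_{\mathcal{M}} \le c,\ \langle \Kop^*\nu_m, w\rangle + \langle \nu_m, p(w)\rangle = z_m,\ m=1,\ldots,M\right\},
\end{equation*}
the intersection of the closed TV-ball with $M$ weak*-closed affine hyperplanes in the predual $\mathcal{M}(\T^d) = \mathcal{C}(\T^d)'$.

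A classical result in the geometry of TV-balls under finitely many linear constraints (Fisher-Jerome~\cite{Fisher1975}, see also~\cite{boyer2019representer,bredies2018sparsity}) then asserts that the extreme points of $B$ are Dirac streams $w = \sum_{k=1}^K a_k \Sha(\cdot - \bm{x}_k)$ with $K \le M$. Mapping back through $f_{\mathrm{ext}} = \Kop w + p(w) = \sum_{k=1}^K a_k g_\Lop(\cdot - \bm{x}_k) + p(w)$ produces the announced form~\eqref{eq:LopfRT}, and the annihilation condition $\bm{M}\bm{a} = \bm{0}$ is automatic since $w = \Lop f_{\mathrm{ext}}$ has vanishing Fourier coefficients on $N_\Lop$ (Proposition~\ref{prop:Mnkmatrix}). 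The main obstacle is the careful application of the extreme-point characterization of $B$: one must ensure that the constraint functionals $w \mapsto \langle \Kop^*\nu_m, w\rangle + \langle \nu_m, p(w)\rangle$ are weak*-continuous on $\mathcal{M}(\T^d)$, which is precisely guaranteed by Theorem~\ref{theo:whatisCL} via $\Kop^*\nu_m \in \mathcal{C}(\T^d)$, and to handle the null-space degeneracy of $\Lop$, which is resolved by the injectivity hypothesis on $\bm{\nu}|_{\NL}$.
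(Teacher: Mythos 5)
Your proposal is correct and follows essentially the same route as the paper: the direct method (proper, convex, coercive, weak*-lsc objective) plus Krein--Milman for the topological claims, then fixing an extreme point, freezing its measurement vector, decomposing $f = \Kop w + p$, and invoking the Fisher--Jerome characterization of extreme points of TV-constrained sets to obtain the Dirac-stream form of $w = \Lop f_{\mathrm{ext}}$ and hence the spline form of $f_{\mathrm{ext}}$, with $\bm{\mathrm{M}}\bm{a}=\bm{0}$ coming from $\Proj_{\NL} w = 0$. The only cosmetic difference is that you eliminate the null-space component as an affine function $p(w)$ (which exists only for $w$ in the feasible affine subspace, so the constraints in your set $B$ should be read as defining its domain), whereas the paper keeps the pair $(w,p)$ and applies its Fisher--Jerome lemma directly to pairs.
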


	The periodic representer theorem reveals the form of the solutions of the optimization problem \eqref{eq:optiwellstated} in the following sense: (i) The extreme point solutions are periodic $\Lop$-splines with at most $M$ knots, with $M$ the number of measurements. (ii) The other spline solutions are finite convex combinations of the extreme points. (iii) Any solution is the (weak*) limit of spline solutions.
	Note that an extreme point solution is such that $\Lop  f_{\mathrm{ext}} = \sum_{k=1}^K a_k \Sha( \cdot - \bm{x}_k) $
		is a finite sum of Dirac combs. \\
		
	\textbf{Running example $\Lop = \Dop^N$, $N\geq 1$.}  Theorem \ref{theo:RT} can be applied to the periodic spline-admissible operator $\Dop^N$. The second condition is then equivalent to the existence of $1 \leq m \leq M$ such that $\langle \nu_m , 1 \rangle = \widehat{\nu}_m [0] \neq 0$, or equivalently, the existence of a linear functional with nonzero mean, which is a mild requirement.
	Then, under the conditions of Theorem \ref{theo:RT}, the extreme points of the solution set $\mathcal{V}$ are periodic $\Dop^N$-splines. 
	 	\\

	The seminal work of Fisher and Jerome~\cite{Fisher1975} was developed over compacts domains, and we will see that Theorem \ref{theo:RT} can be deduced from their main result, that we recall here with our notation. We present here a slight adaptation of \cite[Theorem 1]{Fisher1975}.
		
		We fix a set of distinct Fourier frequencies $\{\bm{k}_n, \,n=1,\ldots , N_0\}$ and introduce $\mathcal{N} = \mathrm{Span}\{e_{\bm{k}_n}, n=1,\ldots , N_0\}$ which is a space of dimension $N_0$. We define the orthogonal projector $\mathrm{Proj}_{\mathcal{N}}$ over $\mathcal{N}$ as in \eqref{projdef}, and set $\mathrm{Proj}_{\mathcal{N}^\perp} = \mathrm{Id} - \mathrm{Proj}_{\mathcal{N}}$. Then, the spaces $\mathrm{Proj}_{\mathcal{N}^\perp} (\mathcal{C}(\T^d))$ and $\mathrm{Proj}_{\mathcal{N}^\perp} (\mathcal{M}(\T^d))$
inherit the Banach space structures of $\mathcal{C}(\T^d)$ and $\mathcal{M}(\T^d)$ for the restriction of the norms $\lVert \cdot \rVert_\infty$ and $\lVert \cdot \rVert_{\mathcal{M}}$, respectively. Moreover, as we have seen in the proof of Theorem \ref{theo:RieszMarkovgeneralized}, $(\mathrm{Proj}_{\mathcal{N}^\perp} (\mathcal{C}(\T^d)))' = \mathrm{Proj}_{\mathcal{N}^\perp} (\mathcal{M}(\T^d))$.
		
		\begin{lemma}[Fisher-Jerome Theorem] \label{theo:FJ}
		Let $\mathcal{N} = \mathrm{Span}\{e_{\bm{k}_n}, n=1,\ldots , N_0\}$as above and $M \geq N_0$. 
		Let $(f_m,q_m) \in \mathrm{Proj}_{\mathcal{N}^\perp} (\mathcal{C}(\T^d)) \times \mathcal{N}$, $m=1,\ldots , M$, be a set of linearly independent couples of functions. We assume moreover that  $\bm{q}(p) = (\langle q_1, p \rangle , \ldots , \langle q_M, p \rangle ) = \bm{0}$ if and only if $p=0$ for $p\in \mathcal{N}$. 
		Let $\bm{z}_0 \in \R^M$ be such that there exists $(w, p) \in \mathrm{Proj}_{\mathcal{N}^\perp} (\mathcal{M}(\T^d))\times \mathcal{N}$ with 
		\begin{equation} \label{eq:conditionwp}
			\bm{\mu}((w,p)) := (\langle f_1 , w \rangle + \langle q_1, p \rangle , \ldots , \langle f_M , w \rangle + \langle q_M, p \rangle ) = \bm{z}_0.
		\end{equation}
		Then, the set of minimizers 
		\begin{equation} \label{eq:theFJadapted}
			\underset{\bm{\mu}(w,p) = \bm{z}_0}{\arg \min} \lVert w \rVert_{\mathcal{M}}
		\end{equation}	
		is non empty, convex, weak* compact in $\mathrm{Proj}_{\mathcal{N}^\perp} (\mathcal{M}(\T^d)) \times  \mathcal{N}$,  and its extreme points are of the form
		\begin{equation} \label{eq:extremewp}
			(w_{\mathrm{ext}}, p_{\mathrm{ext}}) = \left( \sum_{k=1}^K a_k \Sha( \cdot - \bm{x}_k) , p_{\mathrm{ext}}\right),
		\end{equation}
		where $p_{\mathrm{ext}} \in \mathcal{N}$, $a_k \in \R$ with $\bm{\mathrm{M}}\bm{a} = \bm{0}$ where $\bm{\mathrm{M}}$ is defined in \eqref{eq:matrixM}, $\bm{x}_k \in \T^d$, and $K \leq M$.	
		\end{lemma}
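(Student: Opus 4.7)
The plan is to recognize this statement as a lift of the classical Fisher-Jerome theorem \cite{Fisher1975} to the product space $\mathrm{Proj}_{\mathcal{N}^\perp}(\mathcal{M}(\T^d)) \times \mathcal{N}$, and to verify that the functional-analytic setup matches the hypotheses of that abstract result. By the Riesz-Markov theorem applied to the compact Hausdorff space $\T^d$ and projected onto the complement of the finitely many Fourier modes of $\mathcal{N}$, as already observed in the proof of Theorem \ref{theo:RieszMarkovgeneralized}, the space $\mathrm{Proj}_{\mathcal{N}^\perp}(\mathcal{M}(\T^d))$ is isometric to the dual of $\mathrm{Proj}_{\mathcal{N}^\perp}(\mathcal{C}(\T^d))$. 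The constraint functionals $(w,p) \mapsto \langle f_m, w \rangle + \langle q_m, p \rangle$ are weak*-continuous because the $f_m$ lie in the predual and the $q_m$ in the finite-dimensional space $\mathcal{N}$. Hence the feasible set $\mathcal{F} := \bm{\mu}^{-1}(\bm{z}_0)$ is a weak*-closed affine subspace.

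For existence and compactness of the minimizer set, I would apply the Banach-Alaoglu theorem: TV-sublevel sets in $\mathrm{Proj}_{\mathcal{N}^\perp}(\mathcal{M}(\T^d))$ are weak*-compact, and the injectivity assumption on $\bm{q}|_{\mathcal{N}}$ pins down the $p$-component of any feasible pair as a continuous affine function of the $w$-component, so any TV-minimizing sequence $(w_n, p_n)$ is precompact in both coordinates. The weak*-limit lies in $\mathcal{F}$ by closedness of the constraints, and attains the infimum by weak*-lower semicontinuity of $\lVert \cdot \rVert_{\mathcal{M}}$. Convexity of the minimizer set follows from convexity of the norm and affineness of $\mathcal{F}$, while weak*-compactness holds because the minimizer set is a weak*-closed subset of a fixed TV-ball. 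The existence of extreme points is then guaranteed by the Krein-Milman theorem.

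To characterize those extreme points, I would argue by contradiction. Suppose $(w_{\mathrm{ext}}, p_{\mathrm{ext}})$ is extreme but $w_{\mathrm{ext}}$ is not a finite atomic measure with at most $M$ atoms. Using the Hahn-Jordan decomposition of $w_{\mathrm{ext}}$, one constructs a nontrivial perturbation $(h, r)$ with $\bm{\mu}((h, r)) = \bm{0}$ such that $h$ is supported inside either the positive or negative part of $w_{\mathrm{ext}}$ and small enough to preserve signs, so that $\lVert w_{\mathrm{ext}} \pm h \rVert_{\mathcal{M}} = \lVert w_{\mathrm{ext}} \rVert_{\mathcal{M}}$; this exhibits two distinct minimizers averaging to $(w_{\mathrm{ext}}, p_{\mathrm{ext}})$, contradicting extremity. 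Such a perturbation exists whenever $\mathrm{supp}(w_{\mathrm{ext}})$ is too large, by a dimension count: measures on that support form an infinite-dimensional space, while $\bm{\mu}$ maps into $\R^M$. The annihilation condition $\bm{\mathrm{M}}\bm{a} = \bm{0}$ on the resulting atomic weights is then forced by $w_{\mathrm{ext}} \in \mathrm{Proj}_{\mathcal{N}^\perp}(\mathcal{M}(\T^d))$, i.e. the vanishing of its Fourier coefficients at the frequencies of $\mathcal{N}$, recovering exactly the constraint of Proposition \ref{prop:Mnkmatrix}.

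The main obstacle is this extreme-point step, specifically the measure-theoretic construction of the sign-preserving perturbation when $w_{\mathrm{ext}}$ has a diffuse part or more than $M$ atoms; this is the technical core of the classical Fisher-Jerome argument and transports to our setting essentially verbatim from \cite[Theorem 1]{Fisher1975}, modulo bookkeeping for the additional finite-dimensional $\mathcal{N}$-component, which is handled by the injectivity hypothesis on $\bm{q}|_{\mathcal{N}}$.
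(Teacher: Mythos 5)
Your proposal is correct and follows essentially the same route as the paper: the paper likewise treats Lemma \ref{theo:FJ} as a direct transport of \cite[Theorem 1]{Fisher1975} to $\T^d$ with $\mathcal{C}(X)$ replaced by $\mathrm{Proj}_{\mathcal{N}^\perp}(\mathcal{C}(\T^d))$, states that the proof is identical, and obtains the condition $\bm{\mathrm{M}}\bm{a}=\bm{0}$ exactly as you do, from $w_{\mathrm{ext}}\in\mathrm{Proj}_{\mathcal{N}^\perp}(\mathcal{M}(\T^d))$ via Proposition \ref{prop:Mnkmatrix}. Your additional sketch of the internal Fisher--Jerome machinery (Banach--Alaoglu, weak* lower semicontinuity, Krein--Milman, and the sign-preserving perturbation for the extreme points) is accurate, but it elaborates the cited argument rather than departing from the paper's approach.
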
		

		We call condition \eqref{eq:conditionwp} the \emph{feasibility assumption}, it means that one can achieve the measure $\bm{z}_0$ and is obviously needed so that \eqref{eq:theFJadapted} has a solution.  Lemma \ref{theo:FJ} is an adaptation of \cite[Theorem 1]{Fisher1975}, where the authors work with a compact metric space $X$ that we specialize with $\T^d$, and where we simply work with $\mathrm{Proj}_{\mathcal{N}^\perp} (\mathcal{C}(\T^d))$ instead of $\mathcal{C}(X)$. 
	The proof is identical. For a more recent treatment, we refer the reader to \cite[Theorem 7]{Unser2017splines} which considers the real line $\R^d$.
	Note that the relation $\bm{\mathrm{M}}\bm{a} = \bm{0}$ comes from the fact that $w_{\mathrm{ext}} \in \mathrm{Proj}_{\mathcal{N}^\perp} (\mathcal{M}(\T^d))$ (see Proposition \ref{prop:Mnkmatrix}).
					
		\begin{proof}[Proof of Theorem \ref{theo:RT}]
		The proof has two parts. We first prove that the solution set $\mathcal{V}$ in \eqref{eq:optiwellstated} is non empty, convex, and weak* compact, and then obtain the form of the extreme points using Lemma \ref{theo:FJ}. \\
		
		\textit{Properties of $\mathcal{V}$.}
		The first part of the proof is classical, we briefly mention the key steps since it follows exactly the line of the one of \cite[Theorem 4]{gupta2018continuous}.
		For $f \in \ML(\T^d)$, we define $J(f) := E(\bm{y}, \bm{\nu} (f ) )  +   \lambda \lVert \Lop f \rVert_{\mathcal{M}}$. The functional $J : \ML(\T^d) \rightarrow \R^+ \cap \{\infty\}$ is proper,  convex, coercive, and weak*-lower semi-continuous (see \cite[Appendix B]{gupta2018continuous} for a full proof). 
		Then, we are exactly in the conditions of \cite[Proposition 8]{gupta2018continuous}, revealing that $\mathcal{V} = \arg\min J$ is effectively non empty, convex, and weak* compact. 
		As such, from the Krein-Milman theorem \cite[p. 75]{RudinFA}, the set $\mathcal{V}$ admits extreme points and is the weak* closure of those extreme points. \\

		\textit{Form of the extreme points.}
		We fix an extreme point solution $f^* \in \mathcal{V}$, set $\bm{z}_0 := \bm{\nu}(f^*) \in \R^M$, and consider the optimization problem 
		\begin{equation} \label{eq:newpb}
			\tilde{\mathcal{V}}  := \underset{f \in \ML(\T^d),  \ \bm{\nu}(f) = \bm{z}_0 }{\arg \min}   \lVert \Lop f \rVert_{\mathcal{M}}.
		\end{equation}	
		Clearly, we have that $\tilde{\mathcal{V}} \subset \mathcal{V}$ since $\lVert \Lop g^* \rVert_{\mathcal{M}} = \lVert \Lop f^* \rVert_{\mathcal{M}}$ and $\bm{\nu}(g^*) = \bm{\nu}(f^*)$ for any $g^* \in \mathcal{V}_{f^*}$. Moreover, $f^*$ is an extreme point of $\mathcal{V}_{f^*}$ (being an extreme point of the bigger set $\mathcal{V}$). We can therefore focus on the optimization problem  \eqref{eq:newpb} and show that its extreme points have the expected representation. 
		
		With Theorem \ref{theo:whatisML}, we know that any $f \in \ML(\T^d)$ has a unique representation as $f = \Kop w + p$ with $w \in \mathrm{Proj}_{\NL^\perp} (\mathcal{M}(\T^d))$ and $p \in \NL$. In particular, we have the equivalence
		\begin{equation} \label{eq:equivalentproblems}
		f^* \in \tilde{\mathcal{V}} \Longleftrightarrow (w^* , p^*) \in \mathcal{W} :=  \underset{(w,p) \in\mathrm{Proj}_{\NL^\perp} (\mathcal{M}(\T^d))\times\NL, \      \bm{\nu}(\Kop w + p) = \bm{z}_0}{\arg \min} \lVert w \rVert_{\mathcal{M}},
		\end{equation}	
		where $f^* = \Kop w^* + p^*$, $w^* \in \mathrm{Proj}_{\NL^\perp} (\mathcal{M}(\T^d))$ and $p^* \in \NL$. The equivalence also holds for the extreme points of both problems: $f_{\mathrm{ext}}$ is an extreme point of $\tilde{\mathcal{V}}$ if and only if $(w_{\mathrm{ext}}, p_{\mathrm{ext}})$ is an extreme point of $\mathcal{W}$, with $f_{\mathrm{ext}} = \Kop w_{\mathrm{ext}} + p_{\mathrm{ext}}$.
		
		We observe that, for any $f = \Kop w + p$ with $(w,p) \in \mathrm{Proj}_{\NL^\perp} (\mathcal{M}(\T^d)) \times \NL$, we have
		\begin{equation}
			\nu_m(f) = \langle \nu_m , \Kop w \rangle + \langle \nu_m , p \rangle 
			= \langle (\Kop)^* \nu_m , w \rangle + \langle \mathrm{Proj}_{\NL} \nu_m , p \rangle.
		\end{equation}
		Then, $f_m := (\Kop)^* \nu_m \in \mathrm{Proj}_{\NL}(\mathcal{C}(\T^d))$ and $\mathrm{Proj}_{\NL} \nu_m \in \NL$. We define $\bm{\mu}$ as in \eqref{eq:conditionwp}.
		
		We are then in the conditions of Lemma \ref{theo:FJ} with $\mathcal{N} = \NL$ for those functionals. 
		Indeed, the condition $\bm{q} (p) = \bm{0}$ if and only if $p=0$ for $p\in \NL$ comes from the second assumption in Theorem \ref{theo:RT}. 
		Note that the feasibility condition is satisfied because $\tilde{\mathcal{V}}$,  hence $\mathcal{W}$, are non empty.
		We deduce that $\tilde{\mathcal{V}}$ inherits the properties of $\mathcal{W}$, and is therefore convex and weak* compact in $\ML(\T^d)$. 
		Moreover, the extreme points are such that 
		$f_{\mathrm{ext}} = \Kop w_{\mathrm{ext}} + p_{\mathrm{ext}}$, where $(w_{\mathrm{ext}}, p_{\mathrm{ext}})$ are given by \eqref{eq:extremewp}.
		This shows that $f_{\mathrm{ext}}$ has the expected form, the relation $\bm{\mathrm{M}} \bm{a} = \bm{0}$ coming from the condition over $\bm{a}$ in \eqref{eq:extremewp} and Theorem \ref{theo:RT} is proved. 
		\end{proof}
		
	\section{Pseudo-Differential Periodic Spline-Admissible Operators} \label{sec:differentialop}
	
	The goal of this section is to provide examples of periodic spline-admissible operators and to highlight their main properties.
	The considered operators are pseudo-differential: they have a \emph{roughening} behaviour---\emph{i.e.}, they reduce the smoothness of the input function. This roughening behaviour is quantified by the spectral growth $\gamma > 0$ (see Definition \ref{def:growth}). 
	We include both univariate (ambiant dimension $d=1$, Section \ref{sec:univariate}) and multivariate ($d \geq 1$, Section \ref{sec:multivariate}) spline-admissible operators. Python routines for efficiently generating and manipulating the various multivariate periodic $\Lop$-splines considered in this section are available on the public GitHub repository \cite{periodispline2020}.
	 The code is compatible with any ambiant dimension $d \geq 1$, and only requires to specify the knots and the amplitudes of the splines. We only use it to represent periodic spines with minimum number of knots (one or two, depending on the operators) but it can be used for any number of knots. 
	Our periodic spline generator leverages truncated Fourier series expansions, implemented efficiently via FFTs using the routines from the GitHub repository  \cite{pyffs}. We moreover make use of F\'ejer kernels for a fast convergence of the truncated Fourier series to the spline function.
	
	\subsection{Univariate Splines-admissible Operators} \label{sec:univariate}
	
	In ambiant dimension $d=1$, we 
	consider classical differential operators and their fractional versions. 
	Table \ref{table:operators1d} provides the list of the considered univariate spline-admissible operators, together with their Fourier sequence and their null space via its set $N_\Lop$ of Fourier frequencies. We recall that $\NL = \mathrm{Span} \{ e_k, \  k \in N_\Lop \}$. 
		
	\begin{table*}[h!] 
\centering
\caption{Families of univariate spline-admissible operators}

\begin{tabular}{ccccccc} 
\hline
\hline\\ 
Spline's type & Operator & Parameter & $\widehat{L}[k]$ & Spectral growth & $N_\Lop$ 
\\
\hline\\[-1ex]
Polynomial splines & $\Dop^N$ & $N\in \N$ & $ (  \mathrm{i} k)^N $ & $N$ & $0$    \\
Exponential splines    & $\Dop +  \alpha \Id$ & $\alpha \notin \mathrm{i} \Z$ &  $  \mathrm{i} k + \alpha$ & $1$  & $\emptyset$     \\
 & $\Dop - \mathrm{i}  k_0 \Id$ & $k_0 \in \Z$ &  $ \mathrm{i} (k - k_0)$ & $1$   &  $k_0$      \\
& $\Dop^2 + k_0^2  \Id$ & $k_0 \in \Z$ &  $k_0^2 - k^2$ & $2$  & $k_0, -k_0$  &    \\
Fractional splines & $\Dop^\gamma$ & $\gamma > 0$ &  $( \mathrm{i} k)^\gamma$ & $\gamma$  & $0$   \\
Fractional \\
 exponential splines & $(\Dop + \alpha \mathrm{Id})^\gamma$ & $\gamma > 0$, $\alpha \in \R$ &  $( \mathrm{i} k + \alpha)^\gamma$   & $\gamma$ & $\emptyset$   \\
Fractional \\
polyharmonic splines & $(-\Delta)^{\gamma/2}$ & $\gamma > 0$ & $\lvert k \rvert^\gamma$ & $\gamma$  & $0$ \\
Sobolev splines & $ (\alpha^2 \Id - \Delta)^{\gamma/2}$ & $\alpha \neq 0$, $\gamma \in \R$ &  $(\alpha^2 +  k^2)^{\gamma/2}$ & $\gamma$  & $\emptyset$    \\
Mat\'ern splines & $\mathrm{M}_\epsilon^\beta$ & $\epsilon > 0$ &  see Definition \ref{def:matern} & $2(\beta - 1/2) $  & $\emptyset$   \\
& &  $\beta \in \mathbb{N}_{\geq 1} + 1/2$ & & & \\
Wendland splines & $\mathrm{W}_{\epsilon,\mu}^\beta$ & $\epsilon > 0$, $\nu \in \mathbb{N}$ &  see Definition \ref{def:wendland}   & $2(\beta - 1/2)$  & $\emptyset$   \\
& &  $\beta \in \mathbb{N}_{\geq 2} $ & & & \\
\hline
\hline
\end{tabular} \label{table:operators1d}
\end{table*}	

	\paragraph{Periodic Polynomial Splines.} 
	We already considered the derivative operator $\Lop = \Dop^N$ of order $N\geq 1$, for which $(\Dop^N)$-splines are  periodic piecewise-polynomial functions of degree less than $(N-1)$ and are $(N-2)$ times continuously differentiable for $N\geq 2$ at their junctions.
	We have seen that a $(\Dop^N)$-spline has at least 2 knots, and that the Green's function of $\Dop^N$ is not a periodic $(\Dop^N)$-spline. However, the function 
	\begin{equation}
		\rho_{\Dop^N} (x) =(\mathrm{D}^N)^\dagger \{\Sha\}(x) - (\mathrm{D}^N)^\dagger \{\Sha\}(x-\pi) = (\mathrm{D}^N)^{\dagger} \{ \Sha - \Sha(\cdot - \pi) \} (x)
	\end{equation}
	is a $(\Dop^N)$-spline, its weights $(a_1, a_2) = (1, -1)$ satisfying the system of equation $\bm{\mathrm{M}}  \bm{a} = \bm{0}$ of Proposition \ref{prop:Mnkmatrix}. 
	We represent $\rho_{\Dop^N}$ over two periods in Figure \ref{fig:Dgamma} for integer values of the parameter $N = \gamma$. 
	
\begin{figure}[h!]
\centering
	\includegraphics[width=0.7\textwidth]{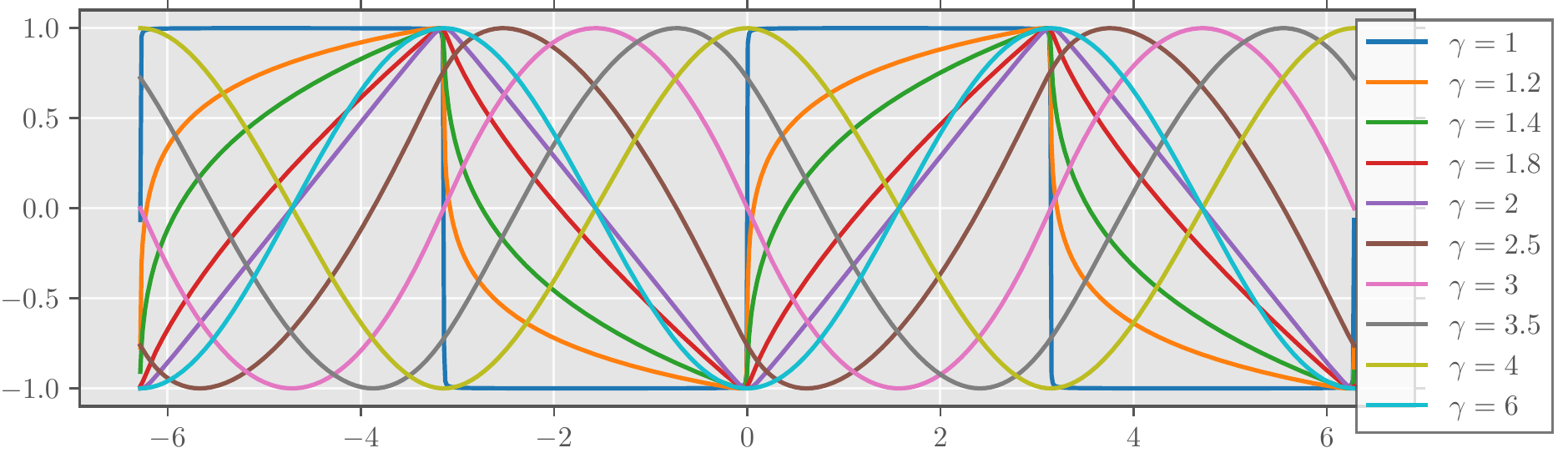}
	\caption{Periodic $\Lop$-splines $\rho_{\Dop^\gamma}$ defined in \eqref{eq:rhoforDgamma} associated to $\Lop = \mathrm{D}^\gamma$ over two periods for various values of $\gamma > 0$. 
	The splines are normalized so that the maximum value is $1$.}
	\label{fig:Dgamma}
\end{figure}

	

	\paragraph{Periodic Exponential Splines.} 
	We fix $\alpha \in \C$ and consider the operator $\Lop = (\Dop + \alpha \mathrm{Id})^N$ with $\alpha \in \C$. 
	We distinguish two cases, depending if $\alpha$ is in $\mathrm{i} \mathbb{Z}$ or not. 	
	Assume first that $\alpha \notin \mathrm{i} \mathbb{Z}$. Then,  $(\Dop + \alpha \mathrm{Id})^N$ has a trivial and therefore finite dimensional null space ($N_0=0$) and is invertible, with Fourier sequence $(\mathrm{i} k + \alpha)^{N}$, which corresponds to a spectral growth of $\gamma = N$. It is therefore spline-admissible. 
	A periodic $(\Dop - \alpha \mathrm{Id})^N$-spline $f$ with knots $x_1, \ldots , x_K$ is a piecewise-exponential-polynomial. More precisely, $f$ is a exponential-polynomial of the form $x \mapsto P(x) \exp(- \alpha x)$, with $P$ a polynomial of degree at most $(N-1)$ on each intervals $[x_{k+1} , x_k]$, $k = 1, \ldots, K$ (with the convention that $x_{K+1} = x_1 + 1$). For $N \geq 2$, $f$ has continuous derivatives   rder $(N-2)$.
	Moreover, the Green's function of $(\Dop + \alpha \mathrm{Id})$ satisfies for $x \in \T=[0,1]$ that $g_{(\Dop + \alpha \mathrm{Id})} (x) = (\Dop + \alpha \mathrm{Id})^{-1} \{ \Sha \} (x) = \frac{\mathrm{e}^{- \alpha x}}{1 - \mathrm{e}^{-\alpha}}$, or equivalently, for $x\in \R$,
	\begin{equation} \label{eq:formulaforDalphaIdGreen}
		g_{(\Dop + \alpha \mathrm{Id})} (x)  = \frac{\mathrm{e}^{- \alpha  (x - \lfloor x \rfloor )}}{1 - \mathrm{e}^{-\alpha}},
	\end{equation}
	where $\lfloor x \rfloor \in \Z$ is the largest integer smaller or equal to $x$. 
	To show this, it suffices to apply $(\Dop + \alpha \mathrm{Id})$ on both sides of \eqref{eq:formulaforDalphaIdGreen}. Similar formulas can be obtained for any $N \geq 1$. 
	We represent the periodic exponential splines $(\Dop - \alpha \mathrm{Id})^{-1} \{ \Sha \}$ for various values of $\alpha > 0$ and integer values $N= \gamma$ in Figure \ref{fig:expgamma}. 

\begin{figure}[h!]
\centering
	\includegraphics[width=0.7\textwidth]{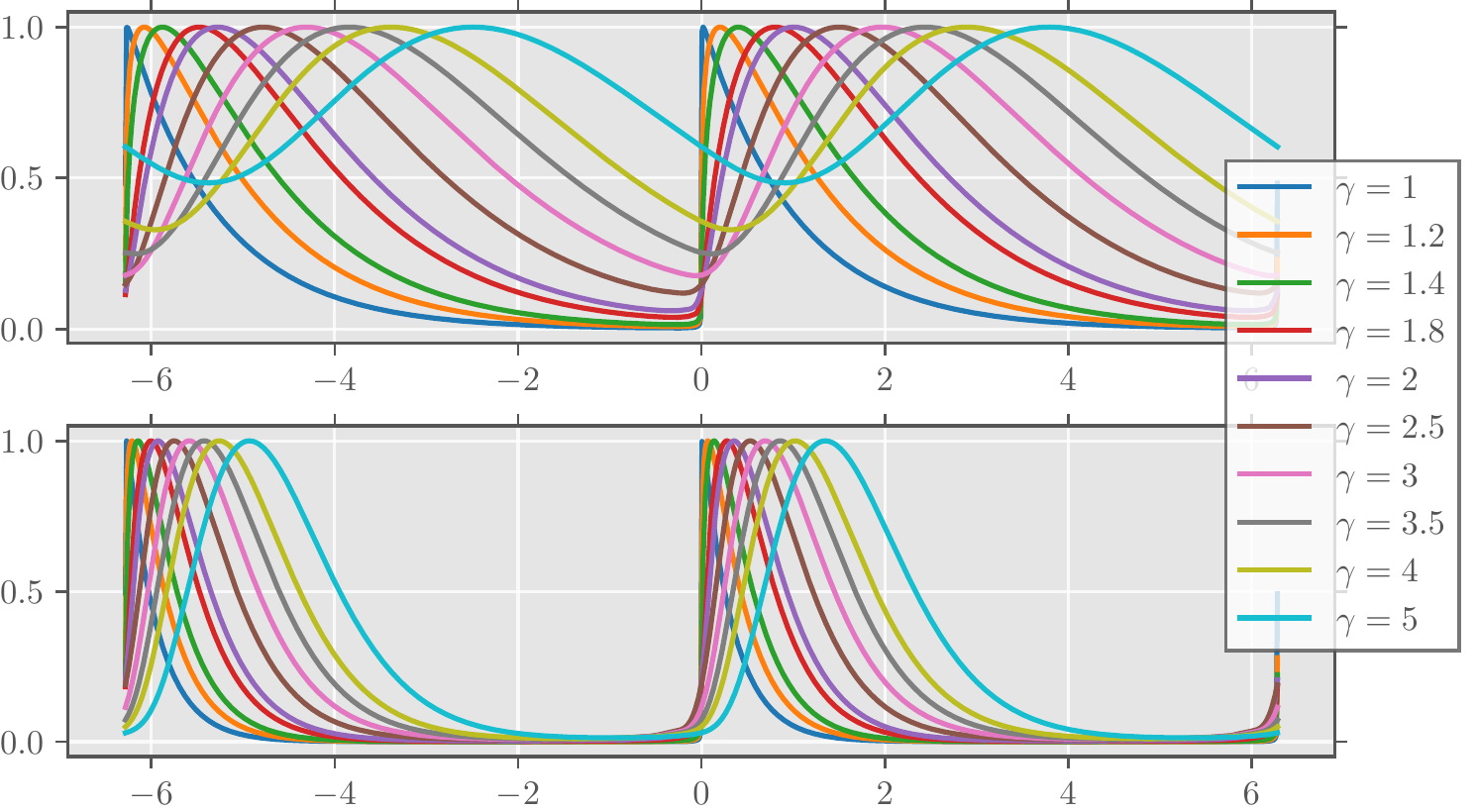}
	\caption{Periodic Green's functions $g_\Lop$ associated to $\Lop = (\mathrm{D} + \alpha \mathrm{Id})^\gamma$ given by \eqref{eq:formulaforDalphaIdGreen} over two periods for various values of $\gamma > 0$ and $\alpha = 1$ (top) or $\alpha = 3$ (bottom). The splines are normalized so that the maximum value is $1$.}
	\label{fig:expgamma}
\end{figure}

	Assume now that $\alpha = \mathrm{i} k_0 \in \mathrm{i} \mathbb{Z}$. In order to define a real operator, we consider instead $\Lop = (\Dop^2 + k_0^2 \mathrm{Id})^N = (\Dop - \mathrm{i} k_0 \mathrm{Id})^N  (\Dop + \mathrm{i} k_0 \mathrm{Id})^N$. This defines a spline-admissible operator whose null space of dimension $N_0 = 2$ is generated by $\cos(k_0 \cdot)$ and $\sin(k_0 \cdot)$, and whose pseudo-inverse has Fourier sequence $\One_{|k| \neq |k_0|} \cdot ( k_0^2 - k^2)^{-N}$. 
	
	More generally, we can consider $\Lop = P(\Dop)$ with $P = X^N + a_{N-1} X^{N-1} + \cdots + a_0$ a polynomial function. By decomposing $P(\Dop) = \prod_{n=1}^{N} (\Dop - \alpha_n \mathrm{Id})$ with $N$ the degree and $\alpha_n$ the complex roots of $P$, 	
	the general case reduces to the convolution between the periodic polynomial and/or periodic exponential splines considered above. The corresponding spectral growth is again $\gamma = N$. 

	\paragraph{Periodic Fractional Splines.}	
	There are several ways of considering fractional versions of integro-differential operators~\cite{samko1993fractional}.
	In our case, we follow the traditional approach for the periodic setting and consider Weyl's fractional derivative~\cite[Chapter XII, Section 8]{zygmund2002trigonometric}.
	For $\gamma > 0$, we define the operator $\Dop^{\gamma}$ from its Fourier sequence given by
	\begin{equation}
		\widehat{D^\gamma}[k] = (  \mathrm{i} k)^\gamma = \lvert  k \rvert^\gamma \exp\left(  - \frac{ \mathrm{i} \gamma \ \mathrm{sign} k}{4}  \right).
	\end{equation}
	When $\gamma = N \in \mathbb{N}$, we recover the classical $N$th order derivative. 
	
	The fractional derivative operator is spline-admissible.
	Its null space of $\Dop^{\gamma}$ is made of constant functions. The pseudoinverse $(\Dop^{\gamma})^\dagger$ of $\Dop^{\gamma}$ has Fourier sequence $\One_{k\neq 0} / (\mathrm{i} k)^\gamma$. Clearly, this corresponds to a spectral growth of $\gamma$. 
	As for the $N$th order derivative, a non constant periodic spline has at least two knots. In Figure \ref{fig:Dgamma}, we represent the function
		\begin{equation} \label{eq:rhoforDgamma}
		\rho_{\Dop^\gamma} (x) =(\mathrm{D}^\gamma)^\dagger \{\Sha\}(x) - (\mathrm{D}^\gamma)^\dagger \{\Sha\}(x-\pi) = (\mathrm{D}^\gamma)^{\dagger} \{ \Sha - \Sha(\cdot - \pi) \} (x)
	\end{equation}
	for various values of $\gamma > 0$. 
	
	We   also consider the fractional versions of the exponential splines, corresponding to the operator $\Lop = (\mathrm{D}+ \alpha \mathrm{Id})^\gamma$ with $\alpha \in \R$ and $\gamma >0$. In Figure \ref{fig:expgamma}, we represent such functions for various values of $\gamma$ and for $\alpha = 1$ and $\alpha = 3$. 
	
	\paragraph{Periodic Polyharmonic Fractional Splines.} 	
	The periodic fractional operator $(-\Delta)^{\gamma/2}$ is defined for $\gamma > 0$ via its Fourier response $\widehat{(-\Delta)^{\gamma/2}} [k] = \lvert k \rvert^\gamma$.
	Its impact on the smoothness of the input function is identical to the fractional derivative $\Dop^\gamma$ (and is actually equal for even integers $\gamma = 2n \geq 2$) but leads to different notions of periodic splines and admit, contrarily to the fractional derivative, non separable multivariate extensions (see Section \ref{sec:multivariate}).
	The operator $(-\Delta)^{\gamma/2}$ is spline-admissible, with a $1$-dimensional null space made of constant functions. In Figure \ref{fig:Lapgamma}, we plot the function
		\begin{equation} \label{eq:splinepolyharm}
		\rho_{(-\Delta)^{\gamma/2}} (x) =((-\Delta)^{\gamma/2})^\dagger \{\Sha\}(x) - ((-\Delta)^{\gamma/2})^\dagger \{\Sha\}(x-\pi) = (-\Delta)^{\gamma/2})^{\dagger} \{ \Sha - \Sha(\cdot - \pi) \} (x)
	\end{equation}
	for different values of $\gamma>0$. 

\begin{figure}[h!]
\centering
	\includegraphics[width=0.7\textwidth]{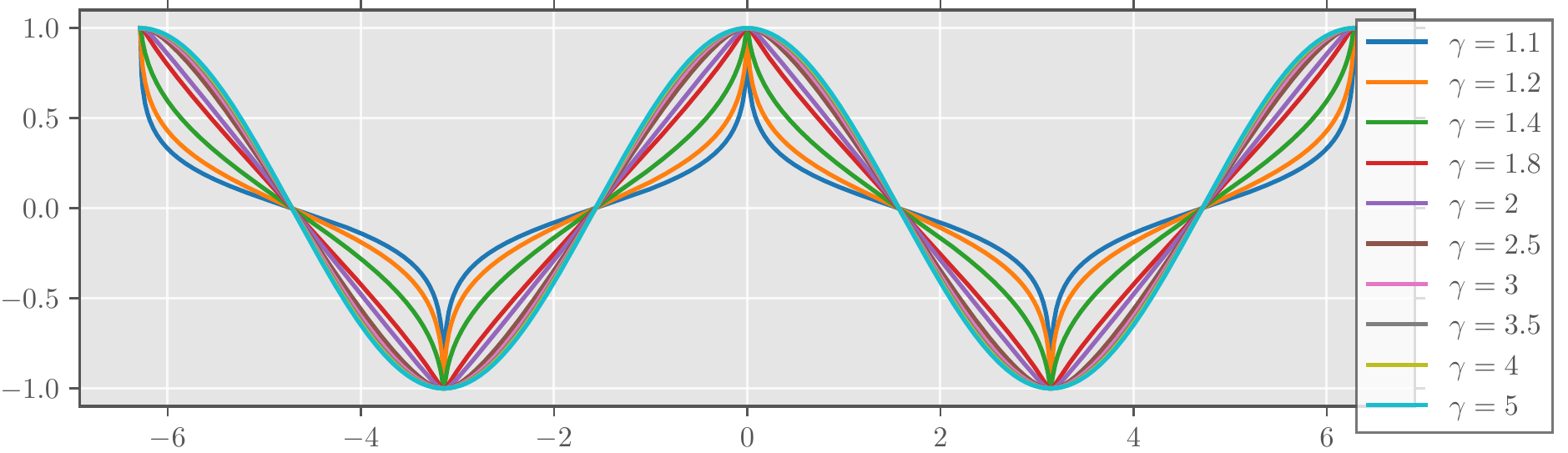}
	\caption{Periodic $\Lop$-splines \eqref{eq:splinepolyharm} associated to $\Lop = (-\Delta)^{\gamma/2}$ over two periods for various values of $\gamma > 0$. The splines are normalized so that the maximum value is $1$.}
	\label{fig:Lapgamma}
\end{figure}

	\paragraph{Periodic Sobolev Splines.} 	
Fix $\alpha > 0$ and $\gamma > 0$.
	We consider the periodic Sobolev operator  $\Lop_{\gamma,\alpha} = (\alpha^2 \mathrm{Id} - \Delta)^{\gamma/2}$, whose Fourier sequence is given by $\widehat{L}_{\gamma,\alpha} [{k}] = (\alpha^2 +k^2 )^{\gamma/2}$. 
	Then, $\Lop_{\gamma,\alpha}$  has a trivial null space and admits a periodic inverse operator: it is therefore spline-admissible.  Moreover, it has a spectral growth of $\gamma$. 
	We represent the periodic Green's function $g_{\Lop_{\gamma,\alpha}} = \Lop^{-1}_{\gamma,\alpha} \Sha$ in Figure  \ref{fig:sobolev1d}.
	\begin{figure}[h!]
\centering
	\includegraphics[width=0.7\textwidth]{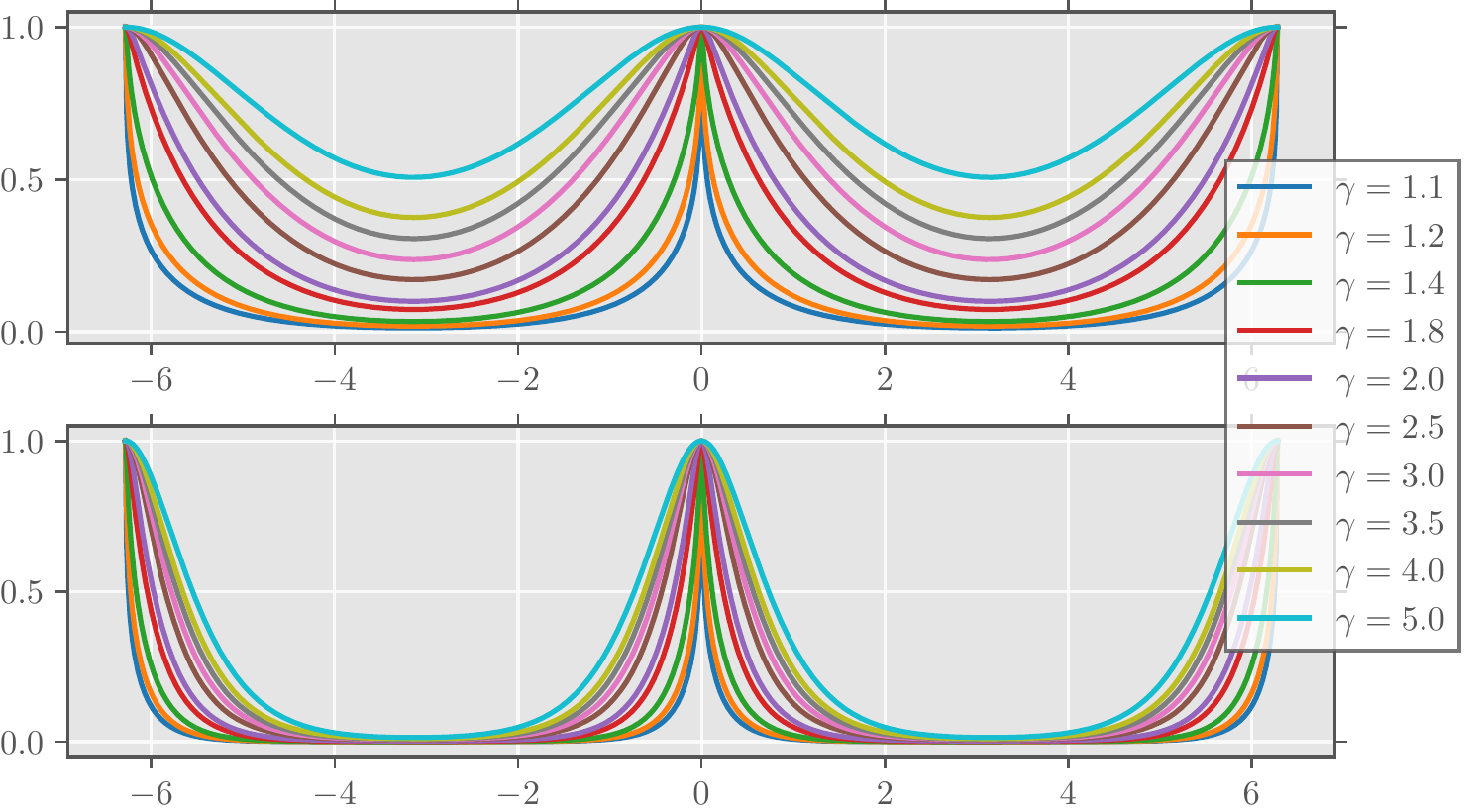}
	\caption{Periodic Green's functions associated to $\Lop = ( \alpha^2 \mathrm{Id} -\Delta)^{\gamma/2}$ over two periods for various values of $\gamma > 0$ and $\alpha\in \R$. The splines are normalized so that the maximum value is $1$.}
	\label{fig:sobolev1d}
\end{figure}

	\paragraph{Spline-admissible Operators via their Green's Functions.} 
	The previous spline-admissible operators have been defined via their Fourier sequences. It is convenient to readily recognize their null space and their spectral growth (see Definition \ref{def:growth}) but also has  some limitations.
	We now propose an alternative construction, which has been described more extensively in  \cite[Chapter 8]{simeoni2020functional}, for which the operator is specified from the construction of its Green's function. In particular, the operators we will consider have the following desirable properties. 
	
	\begin{itemize}	
	\item  They are invertible, self-adjoint, and symmetric in the sense that $g_{\Lop} (x) = g_{\Lop} (-x)$ for any $x \in \T$. 
	\item  They are well-localized: $g_\Lop( \cdot - x_0)$ is concentrated around $x_0$. This is especially appealing for applications, what can be leveraged in practice to design well-conditioned and parsimonious discretization schemes for the gTV-penalized optimisation problem \eqref{eq:optiwellstated} (see \cite[Chapter 8]{simeoni2020functional} for more details). 
	\item  Their Green's function $g_\Lop$ admits a closed form expression in spatial domain. 
	\item They have a well-identified spectral growth. 
	\end{itemize}
	
	The Sobolev operators typically share these properties with the exception of the third one. Moreover, the localization, which is visible in  Figure \ref{fig:sobolev1d} for large values of $\alpha>0$, can again be improved . 
	The general principle for constructing the Green's functions and their corresponding periodic operator is as follows. 
	Consider a function $G^\beta : \R^+ \rightarrow \R$, where $\beta > 1$ will play the role of a smoothness parameter. For $\epsilon > 0$, we set 
	\begin{equation}  
	g^\beta_\epsilon(x)=G^\beta\left(\epsilon^{-1}\sqrt{2-2\cos x }\right),\qquad \forall x\in \mathbb{T},
	\label{restrictions_rbf}
	\end{equation}
	Note that, upon identification of $\mathbb{T}=[0,2\pi)$ with the circle $\mathbb{S}^1=\{\bm{x}\in\R^2: \|\bm{x}\|=1\}\subset \R^2,$ it is possible to interpret \eqref{restrictions_rbf} as the  restriction on $\mathbb{S}^1$ of the radial function $\bm{x} \mapsto G^\beta( \epsilon^{-1} \lVert \bm{x}\rVert)$. 
	The following Lemma is a reformulation of \cite[Lemma 2.1]{gia2012multiscale} where the authors consider the general case of the $d$-dimensional sphere, that we particularize with $d=1$ with our notations.  Consider a function $G^\beta : \R^+ \rightarrow \R$ defining a radial function $G^\beta(\|\cdot\|) : \R^2 \rightarrow \R$.
	 Then, the Fourier transform of $G^\beta(\|\cdot\|)$ is itself radial, and given by $\widehat{G}^\beta( \lVert \cdot\rVert):\R^2\rightarrow \R$ where $\widehat{G}^\beta:\R_+\rightarrow \R$ is the \emph{Hankel transform of order zero} of ${G}^\beta$: $\widehat{G}^\beta(p):=\int_0^{+\infty} r {G}^\beta(r) J_0(rp) dr, \, p\in\R_+.$ 
	 
	 Then, we say that the radial function $G^\beta(\|\cdot\|)$ \emph{reproduces} the Sobolev space
	\begin{equation}\label{eq:soboR2}
	\mathcal{H}_2^{\beta}(\R^2)=\left\{f\in\mathcal{S}'(\R^2) , \quad  (\mbox{Id}-\Delta_{\R^2})^{\beta/2}f\in\mathcal{L}_2(\R^2)\right\},
	\end{equation}
	if $\widehat{G}^\beta$  is strictly positive and the bilinear map  $$(f, g) \mapsto \int_{\R^2}  \frac{ \widehat{f}(\bm{\omega} )\overline{\widehat{g}(\bm{\omega})}}{\widehat{G}^\beta( \lVert \bm{\omega}\rVert)} \mathrm{d} \bm{\omega},$$ is a well-defined inner-product for every element of $\mathcal{H}_2^{\beta}(\R^2)$, whose induced norm is equivalent to the canonical Hilbertian norm $\|(\mbox{Id}-\Delta_{\R^2})^{\beta/2}f\|_2$ on $\mathcal{H}_2^{\beta}(\R^2)$. If $G^\beta(\|\cdot\|)$ does {reproduce} the Sobolev space $\mathcal{H}_2^{\beta}(\R^2)$, we have in particular the set equality: 
	\begin{equation*}
	\mathcal{H}_2^{\beta}(\R^2) =\left\{ f \in \mathcal{S}'(\R^d), \ \int_{\R^2}  \frac{\lvert  \widehat{f}(\bm{\omega} ) \rvert^2}{\widehat{G}^\beta( \lVert \bm{\omega}\rVert)} \mathrm{d} \bm{\omega} < \infty \right\}.
	\end{equation*}

	\begin{lemma} \label{lemma:gia}
	Let $\beta > 1$ and  $G^\beta(\|\cdot\|): \R^2 \rightarrow \R$ be a radial function reproducing $\mathcal{H}_2^\beta(\R^2)$. Then, for each $\epsilon > 0$, there exists constants $0< c_1 \leq c_2 < \infty$ such that
 \begin{equation}  
 c_1(1+\epsilon | k |)^{-2(\beta-1/2)}\leq \widehat{g}_\epsilon^\beta[k] \leq c_2(1+\epsilon  | k |)^{-2(\beta-1/2)}, \quad \forall k\in\Z,
 \label{tight_bound}
  \end{equation}
  where the $ \widehat{g}_\epsilon^\beta[k]$ are the Fourier coefficients of the periodic function $g_\epsilon^\beta$ in \eqref{restrictions_rbf}.  
	\end{lemma}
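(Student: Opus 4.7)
The plan is to prove Lemma \ref{lemma:gia} by reducing it to a classical Bessel-integral estimate, which is essentially the one-dimensional specialization of the result cited from \cite{gia2012multiscale}. My approach has three main ingredients: extracting a pointwise bound on $\widehat{G}^\beta$ from the ``reproducing'' hypothesis, rewriting $\widehat{g}_\epsilon^\beta[k]$ as a Bessel integral, and then estimating that integral in $k$.

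First, I would translate the reproducing assumption into the pointwise equivalence $\widehat{G}^\beta(p) \asymp (1+p^2)^{-\beta}$ on $\R_+$. Since the bilinear form $(f,g)\mapsto \int_{\R^2} \widehat{f}\overline{\widehat{g}}/\widehat{G}^\beta(\|\bm{\omega}\|)d\bm{\omega}$ defines a norm on $\mathcal{H}_2^\beta(\R^2)$ that is equivalent to the canonical Sobolev norm $\|(\Id-\Delta_{\R^2})^{\beta/2}f\|_2$, a standard argument—testing with approximate Fourier-indicator functions supported in small annuli $\{|\|\bm{\omega}\|-p_0|<\delta\}$—converts norm equivalence into weight equivalence, giving $c_1'(1+p^2)^{-\beta}\leq \widehat{G}^\beta(p)\leq c_2'(1+p^2)^{-\beta}$.

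Second, I would identify $g_\epsilon^\beta$ with the restriction to $\mathbb{S}^1\subset\R^2$ of $\bm{x}\mapsto G^\beta(\epsilon^{-1}\|\bm{x}-\bm{x}_0\|)$ for a fixed $\bm{x}_0 \in\mathbb{S}^1$, using $\|\bm{x}(\theta)-\bm{x}(0)\|=\sqrt{2-2\cos\theta}$. Inverting the 2D Fourier transform in polar coordinates and applying Graf's addition formula for Bessel functions $J_0(p\sqrt{2-2\cos\theta})=\sum_m J_m(p)^2 e^{im\theta}$ (equivalently, a direct Jacobi--Anger computation), one obtains the representation
\begin{equation}
\widehat{g}_\epsilon^\beta[k] \,=\, \epsilon^2 \int_0^\infty \widehat{G}^\beta(\epsilon p)\, J_k(p)^2\, p\, dp,
\end{equation}
up to a universal constant depending on the Fourier convention (this can be sanity-checked by taking $G^\beta(r)=e^{-r^2/2}$ and recovering $\widehat{g}_\epsilon^\beta[k]=e^{-1}I_k(1)$ at $\epsilon=1$ via the Weber--Schafheitlin integral). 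Combining with Step 1 yields
\begin{equation}
\widehat{g}_\epsilon^\beta[k] \,\asymp\, \epsilon^2 \int_0^\infty (1+\epsilon^2 p^2)^{-\beta}\, J_k(p)^2\, p\, dp.
\end{equation}

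Third, the Bessel integral must be estimated in $|k|$. Using that $J_k(p)^2 p$ is exponentially small (in $k$) for $p\ll |k|$ and satisfies $J_k(p)^2 p \sim (2/\pi)\cos^2(p-k\pi/2-\pi/4)$ for $p\gtrsim|k|$ (averaging to $1/\pi$), the integral behaves like $\pi^{-1}\int_{|k|}^\infty (1+\epsilon^2 p^2)^{-\beta}dp$. Substituting $u=\epsilon p$ gives, for $\beta>1/2$, a value $\asymp \epsilon^{-1}(1+\epsilon|k|)^{1-2\beta}$. Multiplied by the prefactor $\epsilon^2$, this gives $\widehat{g}_\epsilon^\beta[k]\asymp \epsilon\,(1+\epsilon|k|)^{-2(\beta-1/2)}$, and the factor $\epsilon$ is absorbed into the $\epsilon$-dependent constants $c_1,c_2$ of the statement. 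The main obstacle is the uniform-in-$k$ lower bound: the upper bound follows easily by writing $J_k(p)^2 p\leq 1/\pi$ (true on average, and up to a constant pointwise by Landau's bound $|J_k(p)|\leq C p^{-1/3}$), but matching it from below requires a non-trivial lower estimate, e.g., integrating $J_k(p)^2 p$ over a window near the first maximum $p\approx|k|+c|k|^{1/3}$ or over several oscillation periods where $\cos^2$ can be averaged. These are precisely the estimates established in \cite[Lemma 2.1]{gia2012multiscale} for general $d$, and specializing to $d=1$ yields the lemma.
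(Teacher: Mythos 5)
Your proposal is consistent with the paper, which in fact gives no proof of this lemma at all: it is stated as a direct reformulation of \cite[Lemma 2.1]{gia2012multiscale} specialized to $d=1$, so the ``paper's proof'' is a citation. Your sketch correctly reconstructs the mechanism behind that cited result: (i) the reproducing hypothesis forces $\widehat{G}^\beta(p)\asymp(1+p^2)^{-\beta}$ (your annulus-testing argument is the standard way to pass from norm equivalence to weight equivalence, valid a.e.\ and hence everywhere by continuity of $\widehat{G}^\beta$); (ii) Fourier inversion in polar coordinates plus Graf's addition formula gives $\widehat{g}^\beta_\epsilon[k]\propto\epsilon^2\int_0^\infty\widehat{G}^\beta(\epsilon p)J_k(p)^2\,p\,dp$, which also explains the strict positivity implicit in \eqref{tight_bound} --- and note that the hypothesis $\beta>1$ is exactly what makes $\widehat{G}^\beta(\lVert\cdot\rVert)$ integrable on $\R^2$, so the pointwise inversion on $\mathbb{S}^1$ is legitimate; (iii) the Bessel integral decays like $|k|^{1-2\beta}$, matching the exponent $2(\beta-1/2)$. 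The one genuinely hard step, the uniform-in-$k$ lower bound on $\int\widehat{G}^\beta(\epsilon p)J_k(p)^2\,p\,dp$, you do not prove but honestly flag and defer to the same reference the paper cites; your Gaussian sanity check ($\widehat{g}[k]=e^{-1}I_k(1)$ via the generating function for $I_k$) is correct and a nice consistency test. So the proposal is sound as a guided reduction to \cite[Lemma 2.1]{gia2012multiscale}, which is no less than what the paper itself offers, and it has the added value of making explicit where each hypothesis ($\beta>1$, positivity of $\widehat{G}^\beta$, norm equivalence) enters.
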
 
	
	One can use the function $g_\epsilon^\beta$ to specify a spline-admissible operator, as detailed in the next elementary proposition.
	
	\begin{proposition} \label{prop:usefulcriterion}
	Let $g_\epsilon^\beta$ be as above. 
	Then, the Fourier sequence $(1/ \widehat{g}_\epsilon^\beta[k])_{k\in \Z}$ defines an operator in $\mathcal{L}_{\mathrm{SI}} (\Sp'(\T^d))$ via the relation
	\begin{equation} \label{eq:Gepsibetadef}
 	\mathrm{G}_\epsilon^\beta \{f\} = \sum_{k\in \Z} \frac{\widehat{f}[k]}{\widehat{g}_\epsilon^\beta[k]} e_k, \qquad \forall f \in \Sp'(\T).  
	\end{equation}
	Moreover,  $\mathrm{G}_\epsilon^\beta$ is spline-admissible with trivial null space. Its Green's function is $g_\epsilon^\beta$ and its spectral growth is $\gamma = 2(\beta - 1/2)$.
	\end{proposition}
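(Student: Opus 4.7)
The plan is to reduce the proposition to a direct application of Lemma \ref{lemma:gia} combined with the Fourier characterizations of continuous LSI operators and of spline-admissibility provided by Propositions \ref{prop:firsttrivialstuff} and \ref{prop:finitedimNL}. Nothing substantially new needs to be built; the work is simply to verify that the two-sided estimate \eqref{tight_bound} delivers everything required.

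First, I would check that \eqref{eq:Gepsibetadef} genuinely specifies an element of $\mathcal{L}_{\mathrm{SI}}(\Sp'(\T^d))$. By Proposition \ref{prop:firsttrivialstuff}, it suffices to show that the Fourier sequence $(1/\widehat{g}_\epsilon^\beta[k])_{k\in\Z}$ is slowly growing. The \emph{lower} bound in \eqref{tight_bound} gives $\widehat{g}_\epsilon^\beta[k] \geq c_1 (1+\epsilon|k|)^{-2(\beta-1/2)}$, hence $|1/\widehat{g}_\epsilon^\beta[k]| \leq c_1^{-1}(1+\epsilon|k|)^{2(\beta-1/2)}$, which is polynomially bounded and therefore slowly growing. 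This step is immediate.

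Next, I would invoke Proposition \ref{prop:finitedimNL} to obtain spline-admissibility together with the identification of the Green's function. The same lower bound also yields $\widehat{g}_\epsilon^\beta[k] > 0$ for every $k\in\Z$, so $1/\widehat{g}_\epsilon^\beta[k]$ never vanishes; consequently $N_{\mathrm{G}_\epsilon^\beta} = \emptyset$ and the null space of $\mathrm{G}_\epsilon^\beta$ is trivial (hence finite-dimensional). Proposition \ref{prop:finitedimNL} then tells us the pseudoinverse exists precisely when the reciprocal sequence is slowly growing, which is the case here since $\widehat{g}_\epsilon^\beta[k] \leq c_2$ by the \emph{upper} bound; moreover the Fourier sequence of $(\mathrm{G}_\epsilon^\beta)^\dagger$ is exactly $\widehat{g}_\epsilon^\beta[k]$. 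Since $\widehat{\Sha}[k] = 1$, Definition \ref{def:green} yields $g_{\mathrm{G}_\epsilon^\beta} = (\mathrm{G}_\epsilon^\beta)^\dagger \Sha = g_\epsilon^\beta$.

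Finally, for the spectral growth, I would note that whenever $\epsilon|k| \geq 1$ we have $\epsilon|k| \leq 1+\epsilon|k| \leq 2\epsilon|k|$, so \eqref{tight_bound} implies $|1/\widehat{g}_\epsilon^\beta[k]| = \Theta(|k|^{2(\beta-1/2)})$ in the sense of Definition \ref{def:growth}, giving $\gamma = 2(\beta-1/2)$. I do not foresee any real obstacle: the proposition is essentially a bookkeeping consequence of Lemma \ref{lemma:gia}. The only point worth stating carefully is that the \emph{strict} positivity $\widehat{g}_\epsilon^\beta[k] > 0$ — rather than just non-vanishing away from finitely many frequencies — is what produces a \emph{trivial} null space and not merely a finite-dimensional one.
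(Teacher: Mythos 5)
Your proposal is correct and follows essentially the same route as the paper's own proof: slow growth of the reciprocal sequence from the lower bound in \eqref{tight_bound}, trivial null space from strict positivity, the Green's function identification via $\widehat{\Sha}[k]=1$, and the spectral growth read off from the two-sided estimate. If anything, you spell out two steps the paper leaves implicit (the boundedness of $\widehat{g}_\epsilon^\beta$ needed for the pseudoinverse to exist via Proposition \ref{prop:finitedimNL}, and the distinction between strict positivity and mere finite-codimensional non-vanishing), which is a welcome clarification rather than a deviation.
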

	
	\begin{proof}
	The sequence $(1/ \widehat{g}_\epsilon^\beta[k])_{k\in \Z}$ is clearly slowly growing due to the left inequality in \eqref{tight_bound}, hence the operator $\mathrm{G}_\epsilon^\beta$ defined by \eqref{eq:Gepsibetadef} is   in $\mathcal{L}_{\mathrm{SI}} (\Sp'(\T^d))$ (see Proposition \ref{prop:firsttrivialstuff}). 
	The Fourier sequence of $\mathrm{G}_\epsilon^\beta$ is non vanishing, which implies, due to Proposition \ref{prop:finitedimNL}, that the null space of $\mathrm{G}_\epsilon^\beta$ is trivial. Moreover, the spectral growth is directly deduced from \eqref{tight_bound} in Lemma \ref{lemma:gia}. Finally, we readily see that, by construction, $\mathrm{G}_\epsilon^\beta g_\epsilon^\beta = \Sha$, so that $g_\epsilon^\beta$ is effectively the Green's function of $\mathrm{G}^\beta_\epsilon$. 
	\end{proof}

	\paragraph{Periodic Mat\'ern Splines.}

For $\beta > 1$, we define $G^\beta$ as the \emph{Mat\'ern function of order $(\beta - 1)$}, which is given by~\cite[Eq. (4.14)]{rasmussen2003gaussian}

\begin{equation} \label{eq:kernelmatern}
G^\beta(r)= S_{\beta-1}(r) =   \frac{2^{2-\beta}}{\Gamma(\beta - 1)}\left(\sqrt{2(\beta - 1)} r\right)^{\beta - 1} K_{\beta - 1}\left(\sqrt{2(\beta - 1)}r\right), \qquad \forall r \geq 0,
\end{equation}
where 
where $K_\nu$ denotes the \emph{modified Bessel function of the second kind of parameter $\nu> 0$} \cite[Section 9.6]{abramowitz1948handbook}. 
\begin{definition} \label{def:matern}
Let $\beta > 1$ and $\epsilon > 0$. The Mat\'ern operator $\mathrm{M}_{\epsilon}^{\beta}$ is the operator whose Green's function $g_\epsilon^\beta$ is given by \eqref{restrictions_rbf}, where the radial function satisfies \eqref{eq:kernelmatern}. 
\end{definition}

The next proposition, mostly based on known results, characterizes the properties of the Mat\'ern operators $\mathrm{M}_{\epsilon}^{\beta}$.

\begin{proposition} \label{prop:maternop}
For each $\beta > 1$ and $\epsilon > 0$, $\mathrm{M}_{\epsilon}^{\beta}$ is a spline-admissible operator with trivial null space, spectral growth $\gamma = 2(\beta - 1/2)$, and whose Fourier sequence $\widehat{g}_{\epsilon}^\beta$ satisfies \eqref{tight_bound}. 
Moreover, when $\beta \in \mathbb{N}_{\geq 1}+ 1/2$, the Green's function is the product of an exponential and a polynomial functions due to the relation, for the Mat\'ern function,
\begin{equation}
\label{matern_function}
 S_{k+1/2}(r)=\exp\left(-\sqrt{2k+1} r\right)\frac{k!}{(2k)!}\sum_{i=0}^k \frac{(k+i)!}{i!(k-i)!}\left(\sqrt{8k+4} r\right)^{k-i}, \qquad \forall r \geq 0.
 \end{equation}
 with $k \in \mathbb{N}$. 
\end{proposition}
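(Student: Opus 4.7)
The plan is to reduce the entire proposition to Proposition \ref{prop:usefulcriterion}, which automatically delivers spline-admissibility, triviality of the null space, the spectral growth $\gamma = 2(\beta-1/2)$, and the two-sided bound \eqref{tight_bound} as soon as one verifies the single analytic hypothesis that the radial function $G^\beta(\|\cdot\|) = S_{\beta-1}(\|\cdot\|) : \R^2 \to \R$ reproduces the Sobolev space $\mathcal{H}_2^\beta(\R^2)$ in the sense defined right before Lemma \ref{lemma:gia}. So the whole proof has two essentially disjoint parts: (i) verifying this reproduction property for the Mat\'ern radial function, and (ii) deriving the closed form \eqref{matern_function} when $\beta\in \mathbb{N}_{\geq 1}+1/2$.

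For (i), the content is a well-known identity: the Mat\'ern function $S_{\beta-1}$ is, up to a positive multiplicative constant, the inverse Fourier transform on $\R^2$ of the Bessel-type potential $\bm{\omega} \mapsto \bigl(2(\beta-1) + \|\bm{\omega}\|^2 \bigr)^{-\beta}$. I would either invoke this directly (it is classical; see, e.g., Rasmussen and Williams or Wendland's scattered data approximation monograph), or I would recompute it from the Poisson integral representation of $K_\nu$ together with the Hankel transform formula in polar coordinates. Either way, one obtains
\begin{equation}
\widehat{G^\beta}(\|\bm{\omega}\|) = C_\beta \bigl(2(\beta-1) + \|\bm{\omega}\|^2 \bigr)^{-\beta},
\end{equation}
with $C_\beta>0$. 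This expression is strictly positive and clearly satisfies $\widehat{G^\beta}(\|\bm{\omega}\|) \asymp (1+\|\bm{\omega}\|^2)^{-\beta}$, so the bilinear form $(f,g)\mapsto \int_{\R^2} \widehat{f}\,\overline{\widehat{g}}/\widehat{G^\beta}$ defines an inner product whose induced norm is equivalent to $\|(\mathrm{Id}-\Delta_{\R^2})^{\beta/2}f\|_2$. This establishes the reproduction property; applying Proposition \ref{prop:usefulcriterion} then yields the first three claims together with \eqref{tight_bound}.

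For (ii), I would substitute the classical closed form of the half-integer modified Bessel function of the second kind (see, e.g., Abramowitz--Stegun 10.2.15)
\begin{equation}
K_{k+1/2}(z) = \sqrt{\tfrac{\pi}{2z}}\,\mathrm{e}^{-z}\sum_{i=0}^{k}\frac{(k+i)!}{i!\,(k-i)!\,(2z)^{i}}, \qquad k\in\mathbb{N},
\end{equation}
into the definition \eqref{eq:kernelmatern} of $S_{\beta-1}$ with $\beta = k+3/2$ (after the index shift $\beta-1 = k+1/2$), and then collect powers of $r$ and numerical constants. The gamma function factor $\Gamma(k+1/2)$ in the denominator combines cleanly with $\sqrt{\pi}$ through the duplication formula, and the prefactor $(\sqrt{2k+1}\,r)^{k+1/2}$ cancels the $z^{-1/2}$ in $K_{k+1/2}$; what survives is exactly the right-hand side of \eqref{matern_function}. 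This is a routine but finicky simplification.

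The only genuine obstacle is part (i): justifying the Fourier identity of the Mat\'ern kernel on $\R^2$, since every other claim either follows from Proposition \ref{prop:usefulcriterion} or is a half-integer Bessel identity. I would phrase it as a direct citation of the reproducing-kernel literature on Mat\'ern covariances rather than rederive it, since the derivation would require a short but standard Hankel-transform computation that is orthogonal to the paper's main thrust.
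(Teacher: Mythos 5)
Your proposal is correct and follows essentially the same route as the paper: the paper likewise reduces the first part to the reproduction property of the Mat\'ern radial kernel on $\R^2$ (cited from Wendland's monograph, Theorem 6.13) combined with Lemma \ref{lemma:gia} and Proposition \ref{prop:usefulcriterion}, and obtains \eqref{matern_function} by citing the half-integer closed form of $K_{k+1/2}$ from Abramowitz--Stegun and Rasmussen--Williams. Your extra details (the explicit spectral density $\propto(2(\beta-1)+\|\bm{\omega}\|^2)^{-\beta}$ and the duplication-formula simplification) are accurate but not needed beyond what the paper's citations already cover.
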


\begin{proof}
The kernel $G^\beta$ defined by \eqref{eq:kernelmatern} reproduces the Sobolev space $\mathcal{H}_2^{\beta}(\R^2)$ according to \cite[Theorem 6.13]{wendland2004scattered}. We can therefore apply Lemma \ref{lemma:gia} to deduce \eqref{tight_bound}.
Moreover, the relation \eqref{matern_function} has been derived in \cite[Eq. (4.16)]{rasmussen2003gaussian} and \cite[Eq. (10.2.15)]{abramowitz1948handbook}.
The rest of the Proposition is then a direct application of Proposition \ref{prop:usefulcriterion} to the case of the Mat\'ern function. 
\end{proof}

Proposition \ref{prop:maternop} implies that the Green's function of the Mat\'ern operator admit a closed form expression when $\beta - 1/2 \in \mathbb{N}_{\geq 1}$. For instance, for $\beta = 3/2$, we get that $g_\epsilon^{3/2}(x) = S_{1/2}(\epsilon^{-1} \sqrt{2- 2\cos x})$  for every $x \in \T$, 
where $S_{1/2}(r)=\exp(-r)$ for all $r \geq 0$.
The Mat\'ern function converges to the Gaussian function~\cite[Chapter 4, p. 84]{rasmussen2003gaussian} and is practically indistinguishable from it when  $\beta \geq 9/2$.
Therefore, the periodic Mat\'ern Green's function $g_\epsilon^\beta$ in \eqref{restrictions_rbf} should resemble a bump function, sharply decaying away from zero. Using the Gaussian approximation above, it is moreover possible to approximate its effective support as $2\arcsin(5\epsilon/2)$, highlighting the role of the parameter $\epsilon$. 

On Figure \ref{fig:matern}, we plot the periodic Mat\'ern Green's function $g_\epsilon^\beta$ for 
$\beta\in\{3/2,5/2,7/2,9/2\}$ and $\epsilon\in\{1,3/10\}$. 
As we have seen, due to \eqref{tight_bound}, this corresponds to a spectral growth $\gamma = 2(\beta - 1/2) \in \{2, 4, 6, 8\}$. 
We use the parameter $\gamma$ in Figure \ref{fig:matern} to be consistent with other the other family of operators. 
We observe moreover that the function  $g_\epsilon^\beta$ is more localized for smaller scale parameters $\epsilon$.   

\begin{figure}[t!]
\centering
	\includegraphics[width=0.7\textwidth]{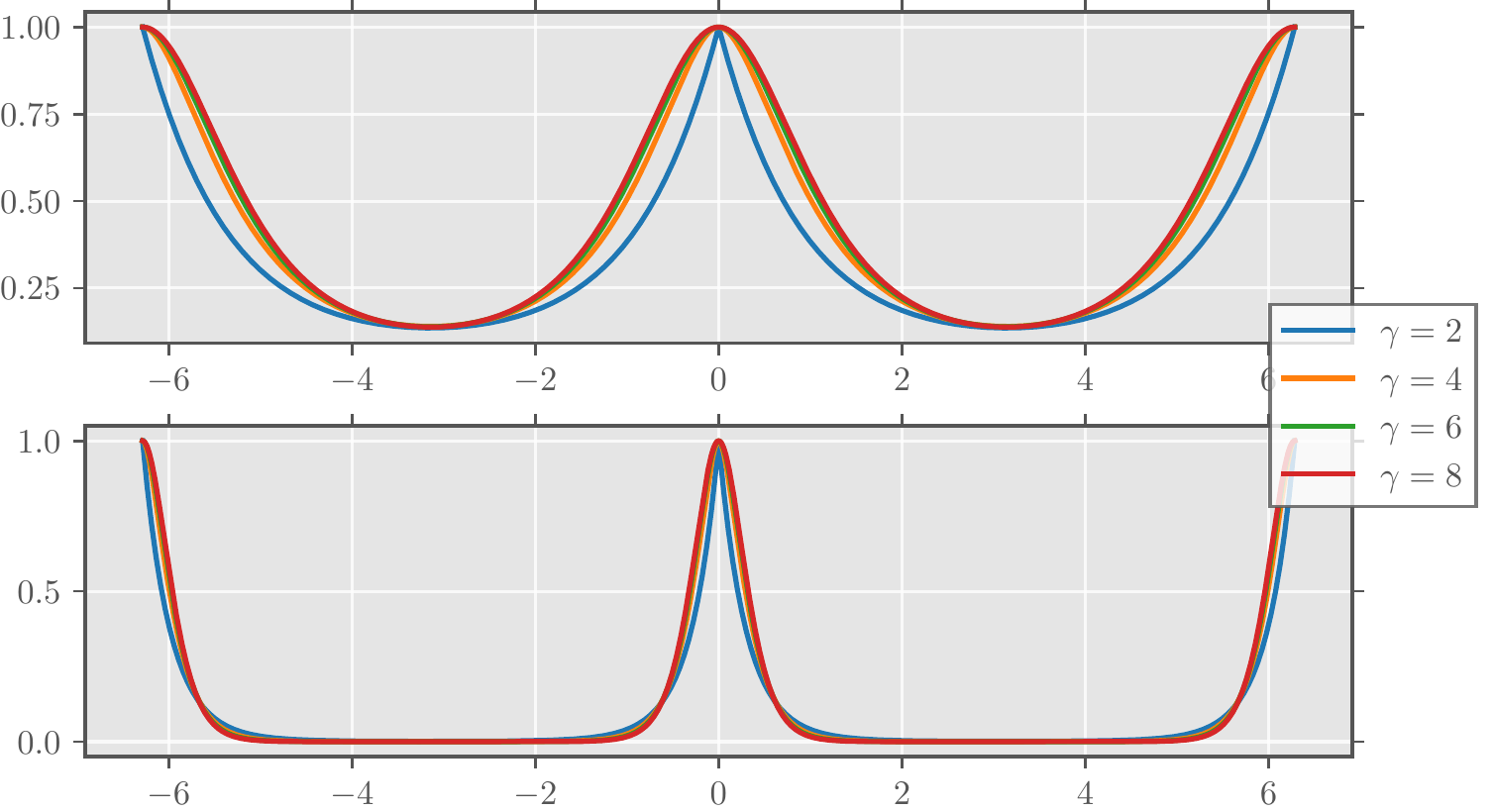}
	\caption{Periodic $\Lop$-splines associated to the Mat\'ern operators over two periods for various values of $\gamma > 0$ and $\epsilon = 1$ (top) or $\epsilon = 0.3$ (bottom). The splines are normalized so that the maximum value is $1$. }
	\label{fig:matern}
\end{figure}

\paragraph{Periodic Wendland Splines.}

While exhibiting good spatial localizations, the Green's functions of the Mat\'ern operators are nevertheless theoretically supported over the entire period. By considering compactly supported radial functions $G^\beta$ in \eqref{restrictions_rbf}, it is possible to construct a class of spline-admissible operators, namely the \emph{Wendland operators}, whose Green's functions are supported on a subregion of $\mathbb{T}$. 

\begin{definition}\label{def:wendland}
For $\mu \in \mathbb{N}$ and $\beta \in \mathbb{N}_{\geq 2}$, the \emph{missing Wendland function} is defined by
\begin{equation}\label{eq:Gmubeta}
W_{\mu}^{\beta}(r):=P_{\mu,\beta - 3/2}(r^2)\log\left(\frac{r}{1+\sqrt{1-r^2}}\right)+Q_{\mu,\beta - 3/2}(r^2)\sqrt{1-r^2},\qquad \forall r \geq 0,
\end{equation}
where $P_{\mu,\beta - 3/2}, Q_{\mu,\beta - 3/2}$ are polynomial functions defined in \cite[Eq. 3.12]{zhu2012compactly}  and \cite[Corollary 4.6]{hubbert2012closed}, respectively. 
The periodic Wendland operator $\mathrm{W}_{\epsilon,\mu}^\beta$ is then the operator whose Green's function $g_{\epsilon,\mu}^\beta$ is given by \eqref{restrictions_rbf} with $G^\beta = W_{\mu}^{\beta}$. 
\end{definition}

The next proposition characterizes the properties of the Mat\'ern operators $\mathrm{W}_{\epsilon,\mu}^{\beta}$.

\begin{proposition} \label{prop:wendlanderie}
Let $\mu \in \mathbb{N}$ and $\beta \in \mathbb{N}_{\geq 2}$. Then, the periodic Wendland operator $\mathrm{W}_{\epsilon,\mu}^\beta$ is in $\mathcal{L}_{\mathrm{SI}} (\Sp'(\T^d))$, is spline-admissible, and has a trivial null space. Moreover, the Fourier sequence of its Green's function satisfies \eqref{tight_bound} and its support is $[-2\arcsin(\epsilon/2),2\arcsin(\epsilon/2)]\subset [-\pi,\pi]$ for $0<\epsilon\leq 2$. Finally, the spectral growth of $\mathrm{W}_{\epsilon,\mu}^\beta$ is $2(\beta - 1/2)$. 
\end{proposition}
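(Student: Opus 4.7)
The plan is to follow the same strategy as for the Matérn operators in Proposition \ref{prop:maternop}, relying on Proposition \ref{prop:usefulcriterion} as the main engine. Concretely, the argument reduces to checking that the radial function $W_\mu^\beta(\|\cdot\|):\R^2\to \R$ reproduces the Sobolev space $\mathcal{H}_2^\beta(\R^2)$ in the sense of Lemma \ref{lemma:gia}, and then harvesting the consequences.

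First, I would invoke the literature on missing Wendland functions: the polynomials $P_{\mu,\beta-3/2}$ and $Q_{\mu,\beta-3/2}$ are precisely tuned so that $W_\mu^\beta$ is a compactly supported radial basis function whose two-dimensional Hankel transform $\widehat{W_\mu^\beta}$ is strictly positive and decays like $(1+\|\bm{\omega}\|)^{-2\beta}$ on $\R^2$, i.e., $W_\mu^\beta(\|\cdot\|)$ reproduces $\mathcal{H}_2^\beta(\R^2)$. This is the content of \cite{zhu2012compactly, hubbert2012closed} (and is the reason those constructions were introduced in the first place). Assuming this known fact, Lemma \ref{lemma:gia} applies verbatim to $g_{\epsilon,\mu}^\beta(x)=W_\mu^\beta(\epsilon^{-1}\sqrt{2-2\cos x})$ and yields the two-sided bound \eqref{tight_bound} on its Fourier coefficients.

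Next, the structural properties follow directly from Proposition \ref{prop:usefulcriterion} applied with $G^\beta=W_\mu^\beta$: the reciprocal sequence $(1/\widehat{g}_{\epsilon,\mu}^\beta[k])_{k\in\Z}$ is slowly growing and non-vanishing, so $\mathrm{W}_{\epsilon,\mu}^\beta\in \mathcal{L}_{\mathrm{SI}}(\Sp'(\T^d))$, it has trivial null space, $g_{\epsilon,\mu}^\beta$ is its Green's function, and its spectral growth equals $2(\beta-1/2)$.

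The only genuinely new point compared to the Matérn case is the support statement, which is a direct geometric computation. The missing Wendland function $W_\mu^\beta$ vanishes outside $[0,1]$ by construction, so $g_{\epsilon,\mu}^\beta(x)\neq 0$ forces $\epsilon^{-1}\sqrt{2-2\cos x}\leq 1$, equivalently $2-2\cos x\leq \epsilon^2$. Using $1-\cos x=2\sin^2(x/2)$, this is equivalent to $|\sin(x/2)|\leq \epsilon/2$, and for $0<\epsilon\leq 2$ and $x\in[-\pi,\pi]$ this in turn is equivalent to $|x|\leq 2\arcsin(\epsilon/2)$, which gives the announced support. The main (and essentially only) obstacle is the first step — importing from the Wendland literature the fact that $W_\mu^\beta$ reproduces $\mathcal{H}_2^\beta(\R^2)$ — after which everything else is a transcription of the Matérn proof.
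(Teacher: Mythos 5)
Your proposal is correct and follows essentially the same route as the paper: cite the reproduction of $\mathcal{H}_2^\beta(\R^2)$ by the missing Wendland functions from \cite{zhu2012compactly,hubbert2012closed}, apply Lemma \ref{lemma:gia} to obtain \eqref{tight_bound}, invoke Proposition \ref{prop:usefulcriterion} for the structural properties, and deduce the support from the compact support of $W_\mu^\beta$ on $[0,1]$. Your explicit computation $\epsilon^{-1}\sqrt{2-2\cos x}\leq 1 \Leftrightarrow \lvert x\rvert \leq 2\arcsin(\epsilon/2)$ is a welcome detail that the paper leaves implicit, but it does not change the argument.
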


\begin{proof}
According to \cite[Proposition 3.5]{zhu2012compactly} , the missing Wendland function $G_{\mu,\beta}$ reproduces the Sobolev spaces $\mathcal{H}^\beta_2(\mathbb{R}^2)$. Lemma \ref{lemma:gia} therefore applies and \eqref{tight_bound} is shown. As for the Mat\'ern case, this means that the corresponding operator with Green's function $g_\epsilon^\beta$ is well-defined and spline-admissible with a trivial null space. 
Moreover, the missing Wendland functions are compactly supported on $[0,1]$ \cite[Theorem 2.2]{hubbert2012closed}, which implies that the support of $g_\epsilon^\beta$ is $[-2\arcsin(\epsilon/2),2\arcsin(\epsilon/2)]\subset [-\pi,\pi]$ for $0 < \epsilon \leq 2$. 
\end{proof}

Closed form expressions for the missing Wendland functions with parameters $\mu = \beta \in \{ 2, 3, 4, 5\}$ are listed in \cite[Table 4.2]{zhu2012compactly}. 
In Figure \ref{fig:wendland}, we represent the periodic Wendland Green's functions for $\mu = \beta \in \{2,3,4\}$. We again use the spectral growth   $\gamma = 2(\beta - 1/2)  \in \{3,5,7\}$. \\

\textit{Remark.} It is worth noting that the Mat\'ern splines admitting a closed form expression are associated to an even spectral growth $\gamma \in 2 \mathbb{N}_{\geq 1}$. In comparison, the Wendland operator has an odd spectral growth $\gamma \in 2\mathbb{N}_{\geq 1}+ 1$. 

	\begin{figure}[t!]
\centering
	\includegraphics[width=0.7\textwidth]{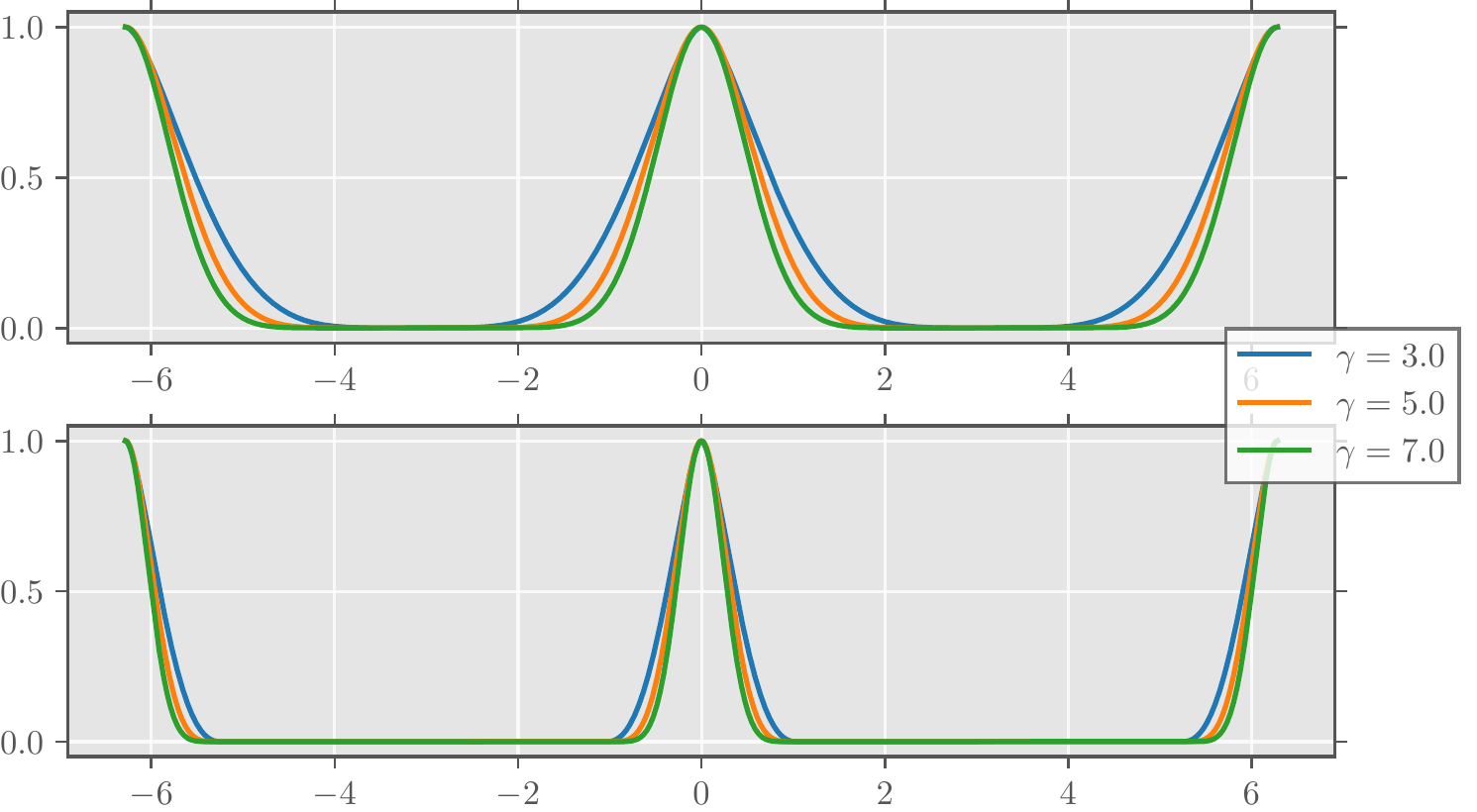}
	\caption{Periodic $\Lop$-splines associated to the Wendland operator over two periods for various values of $\gamma > 0$. The splines are normalized so that the maximum value is $1$.}
	\label{fig:wendland}
\end{figure}

	\subsection{Multivariate Splines-admissible Operators} \label{sec:multivariate}

	In any ambiant dimension $d \geq 1$, we consider two types of spline-admissible operators.
	First, we consider separable ones, based on univariate operators for each of the $d$ variables. Second, we introduce operators with isotropic Fourier sequences, which are nonseparable and have isotropic Green's functions.
	We plot families multivariate periodic splines for the ambiant dimensions $d = 2$ and $d=3$. 
	Table \ref{table:operators1d} provides the list of some of the multivariate spline-admissible operators introduced thereafter, together with their Fourier sequence and their null space via the set of null space frequencies $N_{\Lop}$. 
	
\begin{table*}[h!] 
\centering
\caption{Families of multivariate spline-admissible operators}
	\begin{tabular}{ccccccc} 
\hline
\hline 
Spline's type & Operator & Parameter & $\widehat{L}[\bm{k}]$ & $N_\Lop$  
\\
\hline\\[-1ex]
Separable splines  & $\prod_{i=1}^d (\Dop_i - \alpha_i \mathrm{Id})^{\gamma_i}$ & $\gamma_i >0, \ \alpha_i \in \R$ & $\prod_{i=1}^d ( \mathrm{i} k_i  - \alpha_i)^{\gamma_i}$ & $\emptyset$ \\
Polyharmonic spline & $\Delta^N$ & $N\in \N$ & $ (-1)^N \lVert \bm{k}\rVert^{2N}  $ & $0$   \\
& $\Delta +  \lVert \bm{k}_0 \rVert^2  \Id$ & $\bm{k}_0 \in \Z^d$ &  $  \lVert \bm{k}_0\rVert^2 -  \lVert \bm{k}\rVert^2$  &   $\{ \bm{k}, \  \lVert \bm{k}\rVert^2 =  \lVert \bm{k}_0\rVert^2 \} $   \\
Fractional polyharmonic splines &  $(- \Delta)^{\gamma/2}$ & $\gamma > 0$ &  $\lVert \bm{k}\rVert^{\gamma}$ & $0$   \\
Sobolev splines & $(\alpha^2 \Id - \Delta)^{\gamma/2}$ & $\alpha > 0$, $\gamma > 0$ &  $( \alpha^2 + \lVert \bm{k}\rVert^2)^{\gamma/2}$ &  $\emptyset$    \\
\hline
\hline
\end{tabular} \label{table:operatorsmultid}
\end{table*}	
	
	\paragraph{Periodic Separable Splines.}
	Let $\Lop_i$ be univariate spline-admissible operators for $i=1,\ldots , d$. We assume moreover that each $\Lop_i$ has a trivial null space, and therefore admits an inverse operator $\Lop_i^{-1}$. Then, the operator $\Lop$ with Fourier sequence 
	\begin{equation} \label{eq:Lopseparable}
	\widehat{L}[\bm{k}] = \prod_{i=1}^d \widehat{L}_i[k_i]
	\end{equation}
	for any $\bm{k} = (k_1, \ldots , k_d) \in \Z^d$ is spline-admissible with trivial null space and inverse $\Lop^{-1}$ with Fourier sequence $(1/\widehat{L}[\bm{k}])_{\bm{k}\in \Z^d}$.  
	
	We denote by $\Dop_i$ the derivative with respect to the $i$th coordinate. Applying the previous principle, we easily see that the operator  
	$\Lop = \prod_{i=1}^d (\Dop_i - \alpha_i \mathrm{Id})^{\gamma_i}$ with $\gamma_i > 0$, $\alpha_i \in \R$ is a periodic separable spline-admissible operator, using $\Lop_i = ( \Dop - \alpha_i \mathrm{Id})^{\gamma_i}$ for any $i=1, \ldots , d$ in \eqref{eq:Lopseparable}. 
	We represent the corresponding separable splines with a unique knot at $\bm{x} = \bm{0}$ for the ambiant dimension $d=2$ in Figure \ref{fig:exposeparable}.  \\
	
	\begin{figure}[t!]
\centering
	\includegraphics[width=0.70\textwidth]{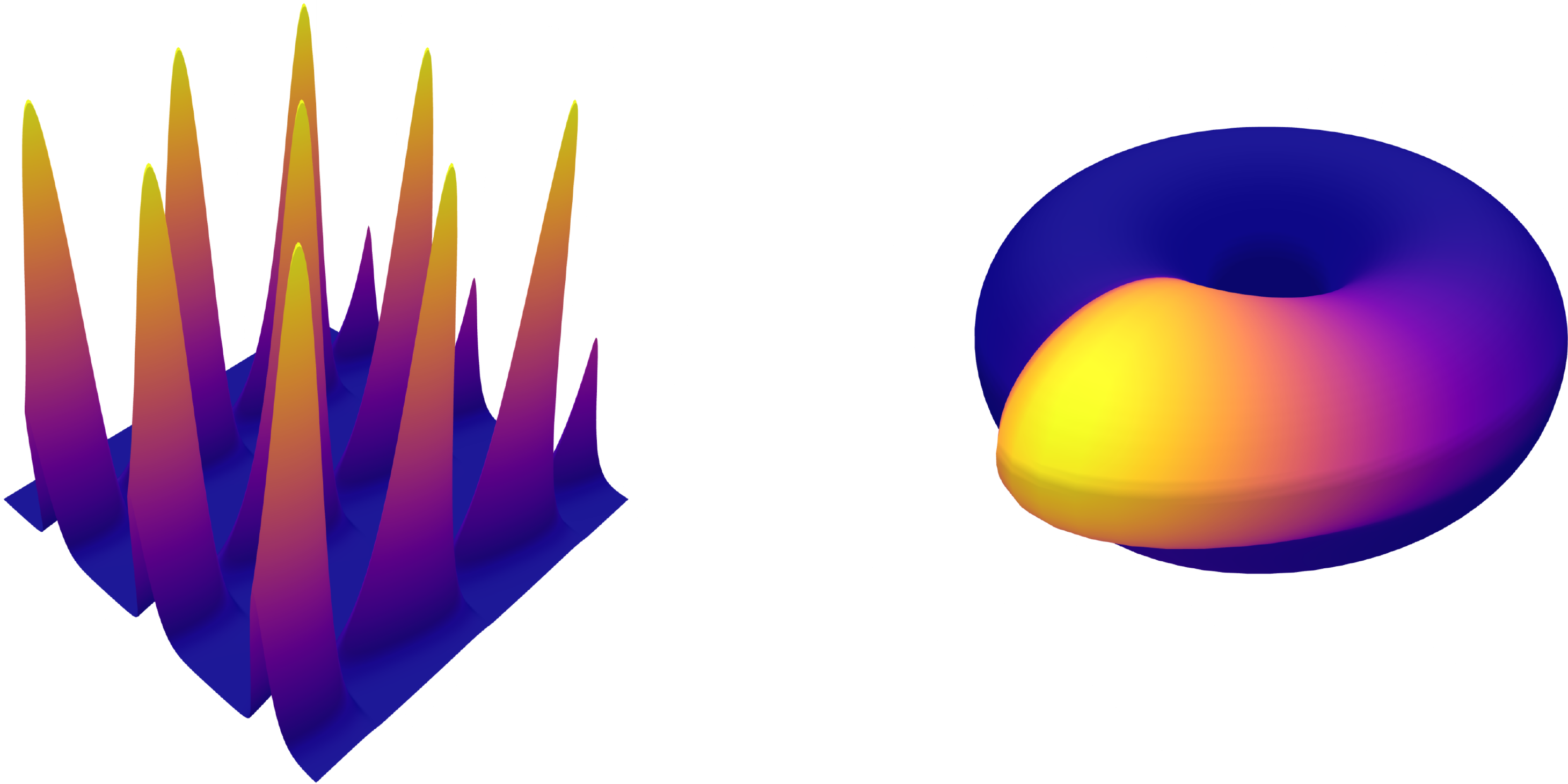}
	\caption{Periodic $\Lop$-spline with one knot $\bm{x}_1 = \bm{0}$ (Green's function)  associated to $\Lop = (\mathrm{D}_1 - \mathrm{Id})^2 (\mathrm{D}_2 - 3 \mathrm{Id})^2$ over $3 \times 3$ periods (left) or in the geometrical $2$d torus (right).}
	\label{fig:exposeparable}
\end{figure}
	
	\textit{Remark.} If one of the $\Lop_i$ has a non trivial null space, then the null space of $\Lop$ defined by \eqref{eq:Lopseparable} is infinite-dimensional, and the operator $\Lop$ is therefore not spline-admissible. Indeed, the null space contains any generalized function of the form $\bm{x} = (x_1, \ldots, x_d) \mapsto p(x_i) f(x_1, \ldots , x_{i-1}, x_{i+1} , \ldots x_d)$ for any $f \in \Sp'(\T^{d-1})$ and $p \in \mathcal{N}_{\Lop_i}$. As a typical example, the operator $\Dop_1 \ldots \Dop_d$ is not spline-admissible. 

	\paragraph{Multivariate Polyharmonic Splines.}
	The fractional Laplacian operator $(-\Delta)^{\gamma/2}$ introduced in the univariate setting has multivariate counterpart. It corresponds to the Fourier sequence $(\lVert \bm{k}\rVert^\gamma)_{\bm{k}\in \Z^d}$. Its null space is made of constant functions, and its pseudoinverse has Fourier sequence $\One_{\bm{k}\neq 0} / \lVert \bm{k} \rVert^\gamma$. It is therefore spline admissible. As we have seen for the derivative $\Dop$, a  non constant periodic $(-\Delta)^{\gamma/2}$-spline has at least two dots. We represents such periodic splines with knots in $\bm{0}$ and $(\pi , 0)$ in Figure \ref{fig:bilaplacien2d}. 
	They corresponds to the periodic counterpart of the polyharmonic splines considered for instance in \cite{Madych1990polyharmonic,rabut1992elementarym,VDV2005polyharmonic}.

\begin{figure}[t!]
	\centering
	\includegraphics[width=0.7\textwidth]{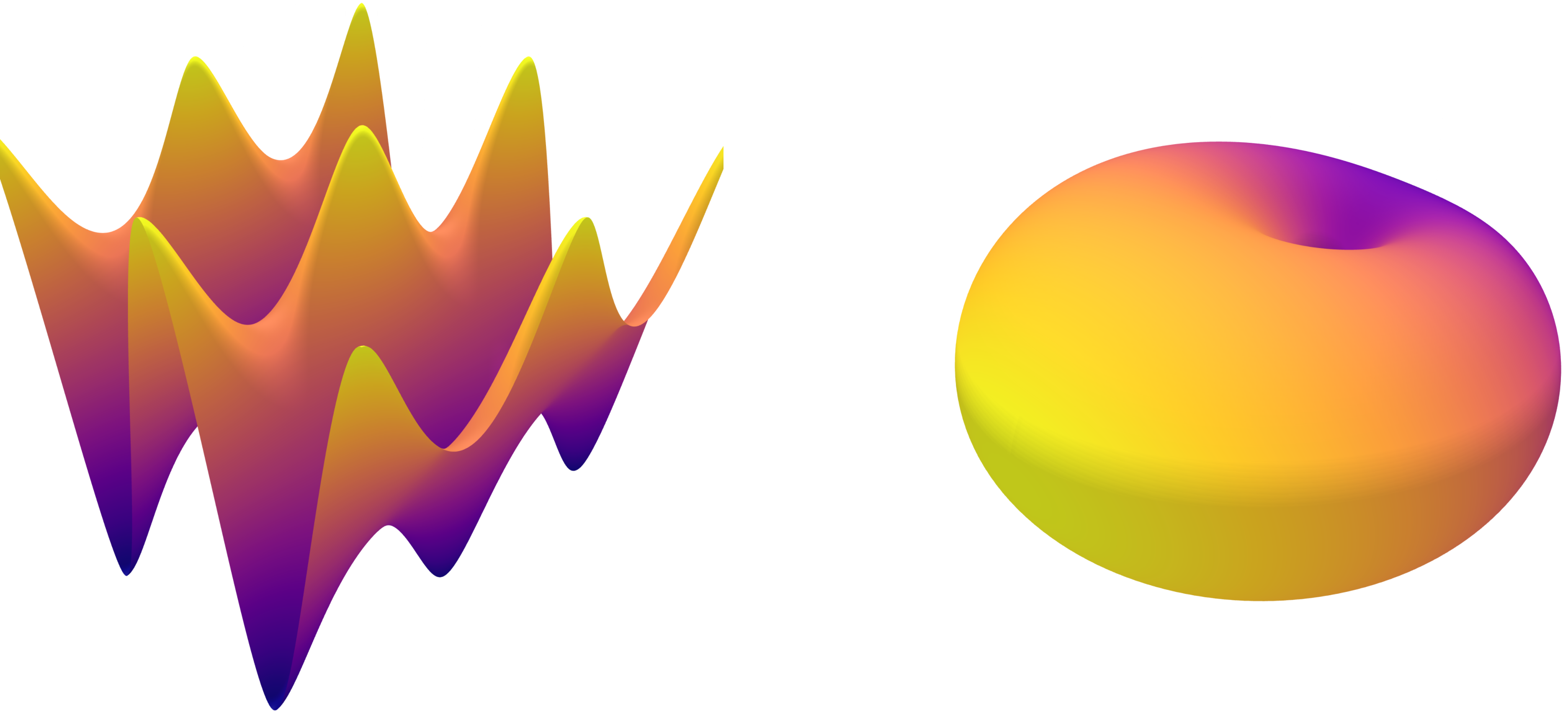}
	\caption{Periodic $\Lop$-spline with two knots at $\bm{x}_1 = (0,0)$ and $\bm{x}_2 = (0, \pi)$ associated to $\Lop = (-\Delta)^2$ over $3 \times 3$ periods (left) or in the geometrical $2$d torus (right).}
	\label{fig:bilaplacien2d}
\end{figure}	
	
	\paragraph{Multivariate Sobolev Splines.}
	Similarly as what we did for the fractional Laplacian, the Sobolev operator admits a multivariate generalization, the operator $(\alpha^2 \mathrm{Id} - \Delta)^{\gamma/2}$ having Fourier sequence $( \alpha^2 + \lVert \bm{k}\rVert^2)^{\gamma/2}$. It is an invertible operator with trivial null space, and is therefore spline-admissible. The corresponding splines with a unique knot at $\bm{x}=\bm{0}$ are represented in Figure \ref{fig:bisobolev2d}.

	\begin{figure}[t!]
\centering
	\includegraphics[width=0.7\textwidth]{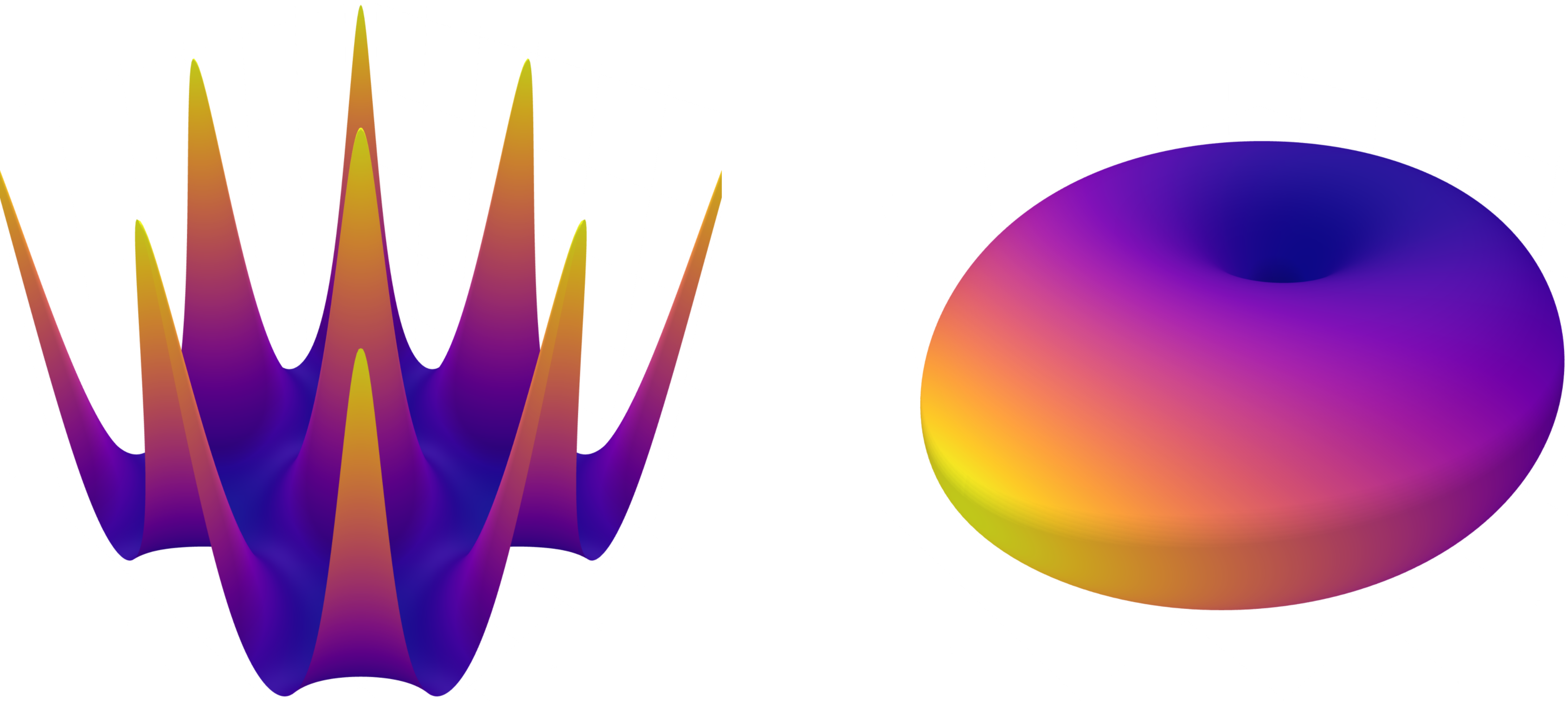}
	\caption{Periodic $\Lop$-spline with one knot $\bm{x}_1 = \bm{0}$ (Green's function)  associated to $\Lop = (4 \mathrm{Id} -\Delta)^2$ over $3 \times 3$ periods (left) or in the geometrical $2$d torus (right).}
	\label{fig:bisobolev2d}
\end{figure}
	
 	\paragraph{Periodic splines in dimension $d=3$.}
	To illustrate the versatility of our framework, we also represent periodic splines in ambiant dimension $d=3$ for separable exponential operators, Sobolev operators and separable Mat\'ern operators in Figure \ref{fig:exponential3d}. 
	
		\begin{figure}[t!]
\centering
	\includegraphics[width=0.3\textwidth]{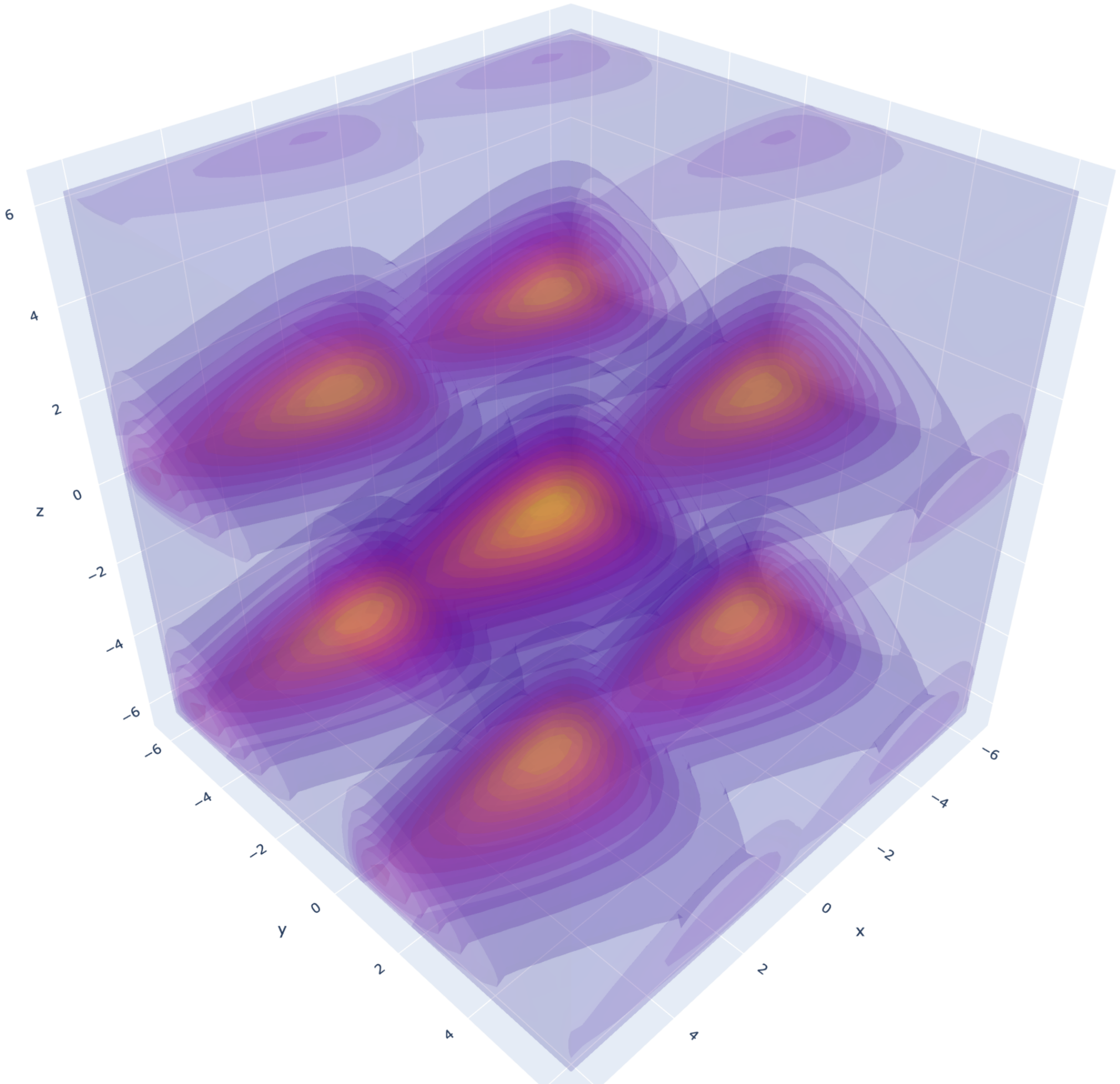}
	\includegraphics[width=0.3\textwidth]{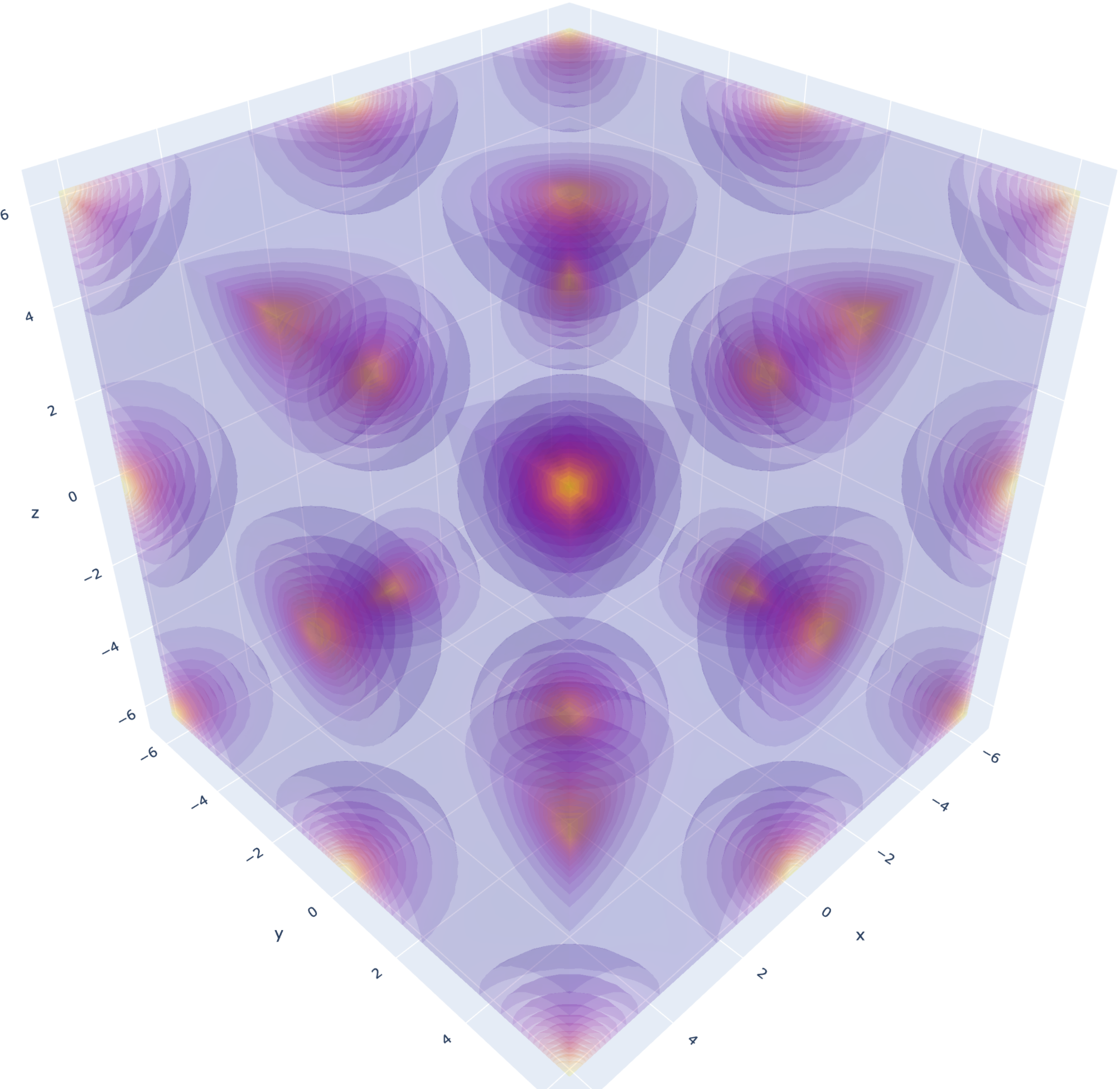}
	\includegraphics[width=0.3\textwidth]{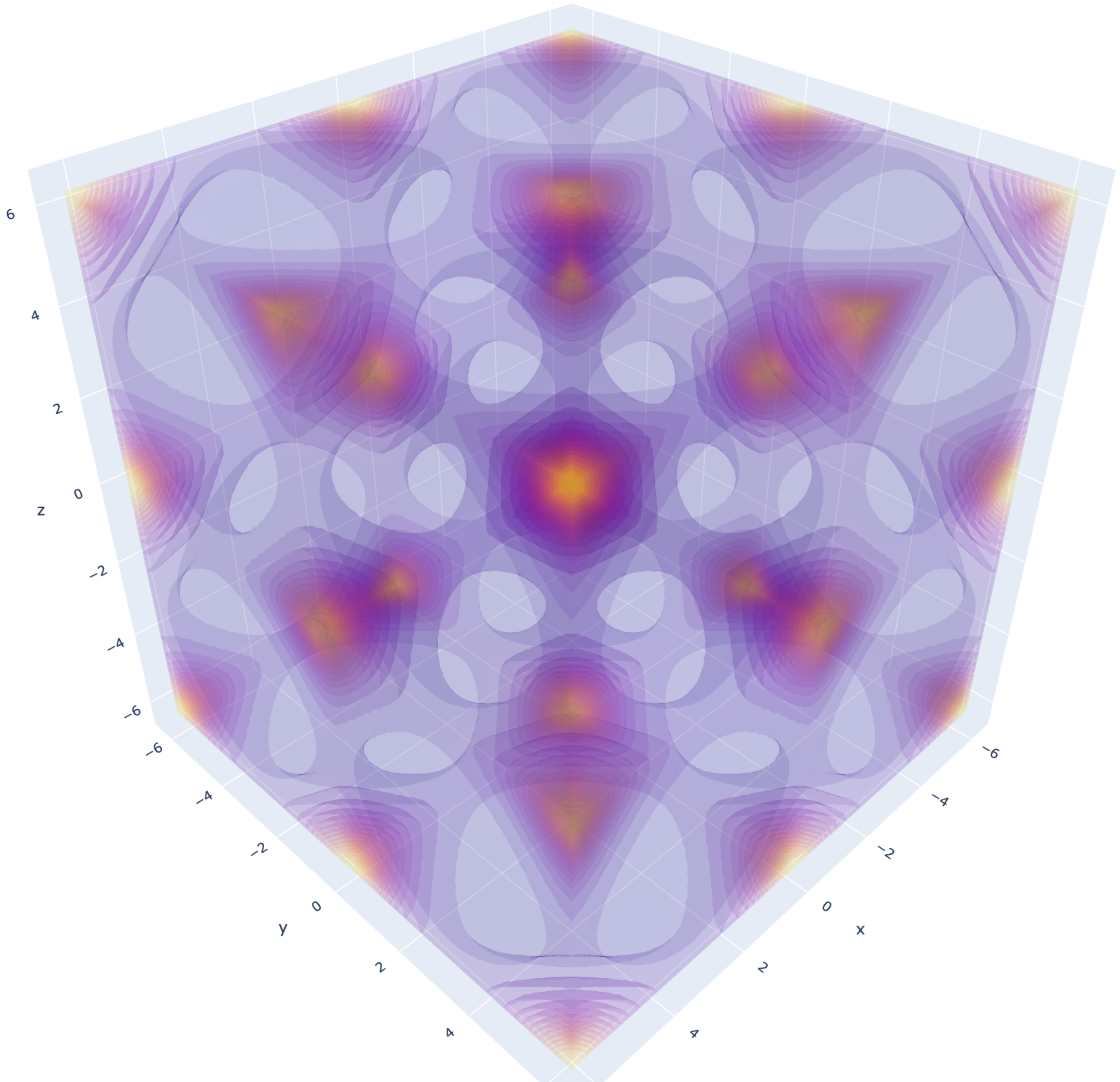}

	\caption{Periodic $\Lop$-spline with one knot $\bm{x}_1 = \bm{0}$ (Green's function)   associated to $\Lop = (0.5 \mathrm{Id} - \mathrm{D}_1)^2  (\mathrm{Id} - \mathrm{D}_2)^2  (1.5  \mathrm{Id} - \mathrm{D}_3)^2$ (left), $\Lop = (4 \mathrm{Id} - \Delta)^2$ (center) and $\Lop =(\mathrm{M}_{1}^{3.5})^3$ (right) over $2 \times 2 \times 2$ periods.}
	\label{fig:exponential3d}
\end{figure}

	\section{Measurement Space and Admissible Measurements} \label{sec:criteria}
		
	We have seen in Theorem \ref{theo:RT} that $\CL(\T^d)$ of a spline-admissible operator $\Lop$ is the exact function space from which the measurements can be taken such that the optimization problem is well-posed and the extreme points are characterizable as periodic $\Lop$-splines. 
	This is of practical significance: it delineates which measurements can be applied in order to keep the conclusions of Theorem \ref{theo:RT}.
		In this section, we provide conditions to identify if classical measurement procedures are applicable for a given spline-admissible operator $\Lop$.
		A special focus is given to Fourier sampling (Section \ref{sec:Fouriersampling}) and space sampling measurements (Section \ref{sec:spatialsampling}).
	


	\subsection{Fourier Sampling} \label{sec:Fouriersampling}
		
	The $\ek$ are infinitely differentiable, therefore in $\Sp(\T^d) \subseteq \CL(\T^d)$ for any spline-admissible operator $\Lop$ (the embedding is proven in Theorem \ref{theo:whatisCL}).
	One can therefore consider the measurement functional $\bm{\nu} = (e_{\bm{k}^1} , \ldots , e_{\bm{k}^M}) \in (\CL(\T^d))^M$ as linear measurements with distinct frequencies $\bm{k}^m$. 
	
	In order to apply Theorem \ref{theo:RT}, the only restriction is that  $\bm{\nu} : \NL \rightarrow \R^M$ should be injective over the finite dimensional null space of $\Lop$.
	Equivalently, we require that the frequencies $\bm{k}^m$ used for the Fourier sampling should include the null space frequencies $\bm{k}_1,\ldots,\bm{k}_{N_0}$ of $\Lop$. 
	For instance, with the $N$th order derivative operator $\Dop^N$ in dimension $d=1$, one should include $\nu_m = e_{0} = 1$ as a measurement functional.
		

	\subsection{Spatial Sampling} \label{sec:spatialsampling}
		
		In view of Theorem \ref{theo:RT}, classical sampling is an admissible measurement procedure if and only if  
		\begin{equation} \label{eq:RKBS}
		 \Sha(\cdot - \bm{x}_0)  \in \CL(\T^d), \qquad \forall \bm{x}_0 \in \T^d.
		\end{equation}
		We recover the classical notion of Reproducing Kernel Hilbert Spaces (RKHS), here in the context of (non reflexive) Banach spaces.
		Under the assumption \eqref{eq:RKBS} and selecting $\nu_m =  \Sha(\cdot - \bm{x}_m)$,  for any $f \in \ML(\R^d)$, we have $\bm{\nu}(f) = (f(\bm{x}_1),\ldots , f(\bm{x}_M) )$. 
		
		\begin{definition}[Sampling-Admissible Operator]
		A spline-admissible operator $\Lop$ is said to be \emph{sampling-admissible} if \eqref{eq:RKBS} holds.
		\end{definition}
				
		\begin{proposition} \label{prop:conditionsampling}
			Let $\Lop$ be a spline-admissible operator.
			Then, we have the equivalence
			\begin{equation} \label{eq:equivalenceCLC}
			\Lop \text{ is sampling-admissible} \quad \Longleftrightarrow  \quad \Kop \Sha \in \mathcal{C}(\T^d).
			\end{equation}
		\end{proposition}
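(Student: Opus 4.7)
The plan is to unfold the definition of the measurement space and reduce the sampling-admissibility condition to a statement about the single generalized function $\Kop\Sha$, using the shift-invariance of $\Kop$ (hence of $\Kop^*$) together with the translation invariance of $\mathcal{C}(\T^d)$.

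First, I would rewrite the defining condition \eqref{eq:RKBS}. By Definition of $\CL(\T^d)$ in \eqref{eq:CL}, the statement $\Sha(\cdot-\bm{x}_0)\in\CL(\T^d)$ is equivalent to $\Kop^*\{\Sha(\cdot-\bm{x}_0)\}\in\mathcal{C}(\T^d)$. Since $\Kop^*\in\mathcal{L}_{\mathrm{SI}}(\Sp'(\T^d))$ commutes with translations, we have $\Kop^*\{\Sha(\cdot-\bm{x}_0)\}=(\Kop^*\Sha)(\cdot-\bm{x}_0)$, and since $\mathcal{C}(\T^d)$ is itself translation invariant, this belongs to $\mathcal{C}(\T^d)$ for every $\bm{x}_0\in\T^d$ iff $\Kop^*\Sha\in\mathcal{C}(\T^d)$. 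The proof therefore reduces to the equivalence $\Kop^*\Sha\in\mathcal{C}(\T^d)\Leftrightarrow\Kop\Sha\in\mathcal{C}(\T^d)$.

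For this last step, I would use Fourier series. By Proposition \ref{prop:firsttrivialstuff} and the fact that $\widehat{\Sha}[\bm{k}]=1$, we have $\Kop\Sha=\sum_{\bm{k}\in\Z^d}\widehat{L^\dagger}[\bm{k}]\,e_{\bm{k}}$ while $\Kop^*\Sha=\sum_{\bm{k}\in\Z^d}\overline{\widehat{L^\dagger}[\bm{k}]}\,e_{\bm{k}}$ (both convergent in $\Sp'(\T^d)$). A direct computation then yields the identity
\[
(\Kop^*\Sha)(\bm{x})=\sum_{\bm{k}\in\Z^d}\overline{\widehat{L^\dagger}[\bm{k}]}\,e^{\mathrm{i}\langle\bm{k},\bm{x}\rangle}=\overline{\sum_{\bm{k}\in\Z^d}\widehat{L^\dagger}[\bm{k}]\,e^{-\mathrm{i}\langle\bm{k},\bm{x}\rangle}}=\overline{(\Kop\Sha)(-\bm{x})},
\]
understood as an equality of generalized periodic functions (since Fourier sequences uniquely determine elements of $\Sp'(\T^d)$). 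Thus $\Kop^*\Sha$ is obtained from $\Kop\Sha$ by composing the reflection $\bm{x}\mapsto-\bm{x}$ with complex conjugation, both of which are continuous involutions of $\mathcal{C}(\T^d)$; hence one of the two belongs to $\mathcal{C}(\T^d)$ iff the other does, which gives \eqref{eq:equivalenceCLC}.

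I do not anticipate any genuine obstacle: the argument is a short chain of elementary equivalences. The only subtle point is the bookkeeping of complex conjugation and reflection when linking $\Kop^*\Sha$ to the Green's function $g_\Lop=\Kop\Sha$; this is handled cleanly at the level of Fourier coefficients, which avoids any delicate pointwise manipulation of the weak* convergent series.
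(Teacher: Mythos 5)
Your proof is correct, and it takes a genuinely different (and somewhat more economical) route than the paper's. Both arguments begin identically, using shift-invariance of $\CL(\T^d)$ to reduce \eqref{eq:RKBS} to the single condition $\Sha\in\CL(\T^d)$. From there the paper invokes the structural decomposition $\CL(\T^d)=\Lop^*(\mathcal{C}(\T^d))\oplus\NL$ from Theorem \ref{theo:whatisCL}: for the forward direction it writes $\Sha=\Lop\{\Kop\Sha\}+\Proj_{\NL}\Sha$, and for the converse it decomposes $\Sha=\Lop^* f+p$ and applies the pseudoinverse identities \eqref{eq:LopKop} to land back on $\Kop\Sha\in\mathcal{C}(\T^d)$ up to smooth null-space corrections. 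You instead work directly from the defining condition $\Kop^*\Sha\in\mathcal{C}(\T^d)$ and close the gap with the Fourier-level symmetry $\Kop^*\Sha=\overline{(\Kop\Sha)(-\cdot)}$, which is legitimate since the reflection-conjugation involution preserves both $\Sp'(\T^d)$ and $\mathcal{C}(\T^d)$ (and is in fact trivial conjugation here, as all objects are real). What your approach buys is independence from the direct-sum characterization — only the definition of $\CL(\T^d)$ and Proposition \ref{prop:firsttrivialstuff} are needed; what the paper's approach buys is that it exercises the same algebra ($\Lop\Kop=\Id-\Proj_{\NL}$) reused throughout Section \ref{sec:criteria}, and avoids any discussion of the adjoint's kernel. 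Your handling of the only delicate point — interpreting the series identity distributionally rather than pointwise — is appropriate.
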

		
		\begin{proof}
		First of all,  $\CL(\T^d)$ is shift-invariant because $\Lop$ is. Hence, $\Sha (\cdot - \bm{x}_0) \in \CL(\T^d)$ for every $\bm{x}_0$ if and only if $\Sha\in \CL(\T^d)$.
		Assume that $\Kop \Sha \in \mathcal{C}(\T^d)$.
		Then, $\Sha = \Lop \{ \Kop \Sha\} + \Proj_{\NL} \Sha \in \Lop (\mathcal{C}(\T^d)) + \NL = \CL(\T^d)$, as expected.
		If now $\Sha \in \CL(\T^d)$, then $\Sha = \Lop f + p $ with $f \in \mathcal{C}(\T^d)$ and $p\in \NL$.
		Therefore, we have that
		\begin{equation} 
			\Kop \Sha = \Kop \Lop f + \Kop p = f - \Proj_{\NL} f + \Kop p
		\end{equation}
		where we used that $\Kop \Lop f = f - \Proj_{\NL} f$ according to \eqref{eq:LopKop}.
		Moreover, $\Kop p \in \Sp(\T^d) \subset \mathcal{C}(\T^d)$ because $p \in \Sp(\T^d)$, $\Proj_{\NL} f \in \NL \subset \Sp(\T^d) \subset \mathcal{C}(\T^d)$, and $f \in \mathcal{C}(\T^d)$ by definition. Hence, $\Kop \Sha \in \mathcal{C}(\T^d)$. The equivalence \eqref{eq:equivalenceCLC} is established.
			\end{proof}

		
		Proposition~\ref{prop:conditionsampling} characterizes the validity of sampling measurements from the smoothness properties of the function $\Kop \Sha$, which plays a similar role to the one of the Green's function for differential operators in a non periodic setting.
		We now present some criteria based on the Fourier sequence of the pseudoinverse operator $\Kop$.
		
		
		\begin{proposition} \label{prop:conditionsamplingbis}
			Let $\Lop$ be a  spline-admissible operator with pseudoinverse $\Kop$.
			\begin{itemize}
			\item If $\sum_{\bm{k} \in \Z^d} \lvert \widehat{L^\dagger}[\bm{k}] \rvert = \sum_{\bm{k}\in \KL} \frac{1}{\lvert \widehat{L}[\bm{k}] \rvert} < \infty$, then $\Lop$ is sampling-admissible.
			\item If $\sum_{\bm{k} \in \Z^d} \lvert \widehat{L^\dagger}[\bm{k}] \rvert^2 = \sum_{\bm{k}\in \KL} \frac{1}{\lvert \widehat{L}[\bm{k}] \rvert^2} = \infty$, then $\Lop$ is not sampling-admissible.
 			\end{itemize}
			In particular, an spline-admissible operator admitting a spectral growth $\gamma > d$ is sampling-admissible. If $\gamma \leq d/2$, then, the operator is not sampling-admissible. 
		\end{proposition}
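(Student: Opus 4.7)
The plan is to exploit Proposition \ref{prop:conditionsampling}, which reduces sampling-admissibility of $\Lop$ to the statement $\Kop \Sha \in \mathcal{C}(\T^d)$. Since $\widehat{\Sha}[\bm{k}] = 1$ for every $\bm{k}\in\Z^d$, the Fourier coefficients of $\Kop\Sha$ are precisely the entries of the sequence $(\widehat{L^\dagger}[\bm{k}])_{\bm{k}\in\Z^d}$. Both bullets then become classical statements about when a Fourier series on the torus represents a continuous function, and the corollaries follow from counting lattice points.

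For the first bullet, I would argue as follows. If $\sum_{\bm{k}\in\Z^d} |\widehat{L^\dagger}[\bm{k}]| < \infty$, then the truncated Fourier series $S_N = \sum_{\|\bm{k}\| \leq N} \widehat{L^\dagger}[\bm{k}]\, e_{\bm{k}}$, each of which is a trigonometric polynomial and therefore in $\mathcal{C}(\T^d)$, forms a Cauchy sequence in $(\mathcal{C}(\T^d),\lVert \cdot\rVert_\infty)$ by the Weierstrass $M$-test applied with $M_{\bm{k}} = |\widehat{L^\dagger}[\bm{k}]|$. Its uniform limit must coincide with $\Kop\Sha$ in $\Sp'(\T^d)$ by uniqueness of Fourier series, hence $\Kop\Sha \in \mathcal{C}(\T^d)$ and $\Lop$ is sampling-admissible by Proposition \ref{prop:conditionsampling}.

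For the second bullet, I would use the embedding $\mathcal{C}(\T^d) \subseteq L_2(\T^d)$, which is valid since $\T^d$ has finite measure. By Parseval's identity, any $f \in L_2(\T^d)$ satisfies $\sum_{\bm{k}\in\Z^d}|\widehat{f}[\bm{k}]|^2 < \infty$. Contrapositively, the divergence of $\sum_{\bm{k}\in\Z^d}|\widehat{L^\dagger}[\bm{k}]|^2$ rules out $\Kop\Sha \in L_2(\T^d)$ and a fortiori $\Kop\Sha \in \mathcal{C}(\T^d)$, so $\Lop$ fails to be sampling-admissible.

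For the spectral-growth specialization, I would invoke the standard lattice-counting estimate: the number of $\bm{k}\in\Z^d$ with $\|\bm{k}\|\in [n,n+1)$ is $\Theta(n^{d-1})$, so $\sum_{\bm{k}\in\Z^d\setminus\{0\}} \|\bm{k}\|^{-\alpha}$ converges if and only if $\alpha > d$. If $\Lop$ has spectral growth $\gamma$ in the sense of Definition \ref{def:growth}, then Proposition \ref{prop:finitedimNL} gives $|\widehat{L^\dagger}[\bm{k}]| = \Theta(\|\bm{k}\|^{-\gamma})$ outside the finitely many null-space frequencies, which contribute nothing to the summability question. Taking $\alpha = \gamma$ in the lattice estimate yields the first bullet when $\gamma > d$, and taking $\alpha = 2\gamma$ yields the second when $2\gamma \leq d$, i.e.\ $\gamma \leq d/2$. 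No step is truly delicate here; the only mild care required is handling the finite set of indices in $N_\Lop$, which is immediate since $\widehat{L^\dagger}[\bm{k}]=0$ there.
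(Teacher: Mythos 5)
Your proposal is correct and follows essentially the same route as the paper: identify $(\widehat{L^\dagger}[\bm{k}])_{\bm{k}\in\Z^d}$ as the Fourier sequence of $\Kop\Sha$, deduce continuity from absolute summability for the first bullet, use the chain $\mathcal{C}(\T^d)\subseteq \mathcal{L}_2(\T^d)$ plus Parseval contrapositively for the second, and settle the spectral-growth cases by comparison with $\sum\lVert\bm{k}\rVert^{-\alpha}$. The only difference is that you spell out the standard details (Weierstrass $M$-test, lattice counting) that the paper leaves implicit.
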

		
		\begin{proof}
		Recall that $( \widehat{L^\dagger}[\bm{k}])_{\bm{k}\in\Z^d}$ is the Fourier sequence of $\Kop \Sha$.
		The first condition means that Fourier sequence of $\Kop \Sha$ is in $\ell_1(\Z^d)$, from which we deduce the continuity of $\Kop \Sha$, and therefore \eqref{eq:RKBS}. The second condition means that $( \widehat{L^\dagger}[\bm{k}])_{\bm{k}\in\Z^d} \notin \ell_2(\Z^d)$, which is equivalent to $\Kop \Sha \notin \mathcal{L}_2(\T^d)$. Hence, $\Kop \Sha \notin \mathcal{C}(\T^d)$ and $\Lop$ is not sampling-admissible.
		When the operator admits a growth order $\gamma$, \eqref{eq:AGforglop} reveals the asymptotic behavior of $ \lvert \widehat{L^\dagger}[\bm{k}] \rvert  = \widehat{g}_\Lop[k]$ and the two last results follow.
		\end{proof}		
		
		\textit{Remarks.} If we only know that $( \widehat{L^\dagger}[\bm{k}])_{\bm{k}\in\Z^d} \in \ell_2(\Z^d)$ and $( \widehat{L^\dagger}[\bm{k}])_{\bm{k}\in\Z^d} \notin \ell_1(\Z^d)$, we cannot say anything in general. Indeed, we shall see in Proposition \ref{prop:1dwhoissamplingOK} that the fractional derivative $\mathrm{D}^\gamma$, which is typically in this regime for $\gamma \in (1/2,1]$ is not sampling-admissible. However, there exists sequences $(c_{\bm{k}})_{\bm{k}\in \Z^d}$ such that $\lvert c_{\bm{k}} \rvert \sim_{\infty} \lVert \bm{k} \rVert^{-\gamma}$, and such that $f = \sum_{\bm{k} \in \Z^d} c_{\bm{k}} e_{\bm{k}} \in \mathcal{C}(\T^d)$. An example for $d=1$ is given by the Hardy-Littlewood series, defined for $\gamma \in (1/2,1]$ by 
		\begin{equation}
		c_0 = 0 \text{ and } \forall k \neq 0, \ c_k = \frac{\mathrm{e}^{\mathrm{i} |k| \log |k|}}{|k|^\gamma}.
		\end{equation}
This Fourier series is known to converge uniformly to a continuous function~\cite[Section V-4]{zygmund2002trigonometric}. 
In that case, if $\Lop$ is defined by its Fourier sequence with $\widehat{L}[k]  = c_{k}$, $k \in \Z$, we have that $\lvert \widehat{L}[k] \rvert = \lvert \widehat{D^\gamma}[k] \rvert$, while $\Lop$ is sampling-admissible. Note that this behavior is based on strong phase oscillations of the coefficients.
We now visit the sampling-admissibility of the classes of operators introduced in Section \ref{sec:differentialop}. \\
		
		Moreover, we can easily generalize Propositions \ref{prop:conditionsampling} and \ref{prop:conditionsamplingbis} to other sampling measurements. For instance, sampling measurements on the derivative of the unknown function is allowed if and only if the derivative of the Dirac comb $\Sha' = \Dop \Sha$ is in the measurement space, with potential applications to spline-based reconstruction with tangent control~\cite{Uhlmann2016hermite}. 
		
		\subsubsection{Sampling-Admissibility of Univariate Operators}
		
		The ambiant dimension is $d=1$. We investigate the sampling-admissibility of classical differential operators, and their fractional counterparts.
		
		\begin{proposition} \label{prop:1dwhoissamplingOK}
		Let $\gamma \geq 0$, $\alpha \in \R$. We have the equivalences:
		\begin{align} \label{eq:1}
		\mathrm{D}^\gamma \text{ is sampling-admissible }  & \Longleftrightarrow (\mathrm{D} + \alpha \mathrm{Id})^\gamma  \text{ is sampling-admissible } 
		\Longleftrightarrow (-\Delta)^{\gamma/2} \text{ is sampling-admissible }  \\ \label{eq:2}
		&  \Longleftrightarrow  (\alpha \mathrm{Id} - \Delta)^{\gamma/2}  \text{ is sampling-admissible }    \Longleftrightarrow  \gamma > 1. 
		\end{align}
		Moreover, the Mat\'ern and Sobolev operators $\mathrm{M}_\epsilon^\beta$ and $\mathrm{W}_{\epsilon,\mu}^\beta$ are sampling-admissible for any $\epsilon, \beta, \nu$. 
		\end{proposition}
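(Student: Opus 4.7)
The plan is to reduce everything to Proposition \ref{prop:conditionsamplingbis} and classify the cases by the spectral growth of each operator. From Table \ref{table:operators1d}, the four fractional operators $\Dop^\gamma$, $(\Dop + \alpha \Id)^\gamma$, $(-\Delta)^{\gamma/2}$, and $(\alpha^2 \Id - \Delta)^{\gamma/2}$ each have spectral growth equal to $\gamma$, while Propositions \ref{prop:maternop} and \ref{prop:wendlanderie} give spectral growth $2\beta - 1 > 1$ for the Matérn and Wendland operators (using $\beta > 1$ and $\beta \geq 2$, respectively). Since the last statement of Proposition \ref{prop:conditionsamplingbis} guarantees sampling-admissibility as soon as the spectral growth exceeds $d = 1$, this single observation simultaneously covers the ``$\gamma > 1 \Rightarrow$ sampling-admissible'' half of the four equivalences and the full Matérn/Wendland claim.

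For the reverse implication ``$\gamma \leq 1 \Rightarrow$ not sampling-admissible'', I would split the range into two sub-cases. When $\gamma \leq 1/2$, the second bullet of Proposition \ref{prop:conditionsamplingbis} applies immediately: since $|\widehat{L^\dagger}[k]| = \Theta(|k|^{-\gamma})$, the sequence fails to lie in $\ell_2(\Z)$, so the operator cannot be sampling-admissible. The delicate sub-case is $\gamma \in (1/2, 1]$, where Proposition \ref{prop:conditionsamplingbis} is inconclusive and a direct analysis of the Green's function $g_\Lop = \Kop \Sha$ is required.

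I would dispatch this delicate sub-case operator by operator. For $(-\Delta)^{\gamma/2}$ and $(\alpha^2 \Id - \Delta)^{\gamma/2}$ the Fourier coefficients $\widehat{L^\dagger}[k]$ are non-negative with $\sum_k \widehat{L^\dagger}[k] = +\infty$; by monotone convergence, the Fejér means of $g_\Lop$ at $x = 0$ diverge to $+\infty$, which precludes continuity at the origin. For $\Dop^\gamma$, I would rewrite the Green's function as the real Fourier series
\begin{equation*}
g_{\Dop^\gamma}(x) = 2 \sum_{k \geq 1} \frac{\cos(kx - \gamma\pi/2)}{k^\gamma},
\end{equation*}
and invoke the classical asymptotics of fractional trigonometric series (see \cite{zygmund2002trigonometric}) to extract a $|x|^{\gamma - 1}$ singularity at the origin for $\gamma \in (1/2, 1)$ and a sawtooth jump at $\gamma = 1$; in either case $g_{\Dop^\gamma} \notin \mathcal{C}(\T)$. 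Finally, for $(\Dop + \alpha \Id)^\gamma$ with $\alpha \in \R$, the asymptotic expansion $(\mathrm{i} k + \alpha)^{-\gamma} - (\mathrm{i} k)^{-\gamma} = O(|k|^{-\gamma - 1})$ gives an absolutely summable Fourier sequence, so $g_{(\Dop + \alpha \Id)^\gamma} - g_{\Dop^\gamma}$ is continuous and the discontinuity transfers from the first operator to the second.

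The main obstacle is precisely the $\gamma \in (1/2, 1]$ analysis for $\Dop^\gamma$: the complex phases of its Fourier coefficients prevent the clean Fejér/monotone-convergence trick that works for the Laplacian-type operators, so the singular behaviour of the Green's function at the origin must instead be extracted from the classical theory of conjugate Riesz--Bernoulli kernels, with the case $\gamma = 1$ handled separately via the sawtooth identity.
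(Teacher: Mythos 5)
Your proposal is correct and its skeleton coincides with the paper's proof: both use the spectral growth $\gamma$ together with Proposition \ref{prop:conditionsamplingbis} to settle $\gamma>1$ (and the Mat\'ern/Wendland operators, whose growth is at least $2$) as well as $\gamma\leq 1/2$; both handle $\Dop^\gamma$ on the critical range $\gamma\in(1/2,1]$ via the sawtooth identity at $\gamma=1$ and the Zygmund asymptotics $(\Dop^\gamma)^\dagger\Sha(x)=\frac{1}{\Gamma(\gamma)}x_+^{\gamma-1}+r_\gamma(x)$ for $\gamma\in(1/2,1)$ \cite{zygmund2002trigonometric}; and both transfer the conclusion to $(\Dop+\alpha\Id)^\gamma$ (and from $(-\Delta)^{\gamma/2}$ to the Sobolev operator) through the $O(|k|^{-\gamma-1})$ difference of Fourier coefficients, which is absolutely summable.

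The one genuine divergence is the treatment of $(-\Delta)^{\gamma/2}$ on $\gamma\in(1/2,1]$. The paper computes the explicit kernel $\log(1-\cos x)+\log 2$ at $\gamma=1$ and, for $\gamma\in(1/2,1)$, invokes the contrapositive of a theorem of Boas \cite{boas1966fourier} on cosine series with positive coefficients. You instead observe that the Fourier coefficients of $((-\Delta)^{\gamma/2})^\dagger\Sha$ are non-negative and non-summable, so the Fej\'er means at the origin satisfy $\liminf_K\sigma_K(0)\geq\sum_{|k|\leq N}\widehat{L^\dagger}[k]$ for every $N$ and hence diverge, contradicting the uniform convergence of Fej\'er means that continuity would force. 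This is correct (it is the classical fact that a continuous function with non-negative Fourier coefficients has an absolutely convergent Fourier series), it treats $\gamma=1$ and $\gamma\in(1/2,1)$ uniformly, and it applies to the Sobolev operator directly rather than by comparison; what it buys is a more elementary, self-contained argument at the price of not exhibiting the explicit singularity. The only point to make explicit is that for $\gamma>1/2$ the Green's function is already in $\mathcal{L}_2(\T)\subset\mathcal{L}_1(\T)$, so its Fej\'er means are well defined and the continuity criterion legitimately applies.
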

		\begin{proof}
		All the considered operators have a spectral growth $\gamma$. Moreover, a spline-admissible operator with spectral growth $\gamma > 1$ is sampling admissible according to Proposition \ref{prop:conditionsamplingbis}. Hence, the condition $\gamma > 1$ implies the sampling admissibility of all the considered operators in\eqref{eq:1} and \eqref{eq:2}. Similarly, the condition $\gamma \leq 1/2$ implies that the Green's function is not square-integrable, and therefore not continuous, and the operators are not sampling-admissible in this case. 
		
		For $\gamma = 1$, the function $\mathrm{D}^{\dagger} \Sha$ is  given by
$\mathrm{D}^{\dagger} \Sha(x) = \pi - x$ for $x \in \T$, and then periodized. Since $\mathrm{D}^{\dagger} \Sha(0^+) = 1/2 \neq \mathrm{D}^\dagger \Sha(2\pi ^-) = -1/2$, the function is discontinuous. Hence, $\mathrm{D}$ is not sampling-admissible.
		For the case $1/2 < \gamma < 1$, we refer to \cite[Eq. (8.10), Section XII-8]{zygmund2002trigonometric}, where the function $(\mathrm{D}^\gamma)^{\dagger} \Sha$---denoted by $\Psi_{\gamma}$ up to a rescaling---is shown to be such that 
		\begin{equation}
		\forall x \in (-2\pi,2\pi), \quad	(\mathrm{D}^\gamma)^{\dagger} \Sha (x) = \frac{1}{\Gamma(\gamma)} (x)_+^{\gamma -1} + r_\gamma(x),
		\end{equation}
		with $\Gamma$ the Gamma function and $r_\gamma$ a function that is infinitely differentiable on $(-2\pi,2\pi)$.
		The function $x \mapsto  (x)_+^{\gamma -1}$ being discontinuous at the origin for $\gamma \in (1/2, 1)$, we deduce that $(\mathrm{D}^\gamma)^{\dagger} \Sha$ is also discontinuous, and $\mathrm{D}^\gamma$ is not sampling-admissible.

		 For $\Lop = (\mathrm{D} + \alpha \mathrm{Id})^\gamma$, we simply observe that
		 \begin{equation} \label{eq:argufracandexpo}
		 	 \widehat{L^\dagger}[k] - \widehat{(D^\gamma)^{\dagger}}[k]  = \frac{1}{(\mathrm{i}  k)^\gamma} \left( \frac{1}{(1 + \frac{\alpha}{ \mathrm{i} k})^\gamma} - 1 \right) =   O\left( \frac{1}{k^{\gamma +1}}\right).
		 \end{equation}
		In particular, we deduce that $\Kop\Sha - \mathrm{D}^\gamma \Sha \in \mathcal{C}(\T)$, because its Fourier sequence is in $\ell_1(\Z)$. This means in particular that $\Kop\Sha \in \mathcal{C}(\T)$ if and only if $\mathrm{D}^\gamma \Sha \in \mathcal{C}(\T)$, and the result follows. 
		
		We show similarly that $ (-\Delta)^{\gamma/2}$ is spline-admissible if and only if $(\alpha \mathrm{Id} - \Delta)^{\gamma/2}  $ is.
		For $\gamma   = 1$, we actually have that
		\begin{equation}
			\widehat{((-\Delta)^{1/2})^{\dagger}}(x) = 2 \sum_{k\geq 1} \frac{e_k(x)}{k} =  \log (1 - \cos x) + \log 2, \qquad \forall x \in \T,
		\end{equation}
		which is clearly discontinuous (and unbounded) around $0$. Fix $\gamma \in (1/2,1)$. Assume that the periodic function $x \mapsto f (x) = \sum_{k\geq 1} \widehat{f}[k] \cos( k x) \in \mathcal{C}(\T)$ has positive Fourier coefficients. Then, for any $\alpha \in (0,1)$, we have that $\sum_{k \geq 1} k^{\gamma -1} \widehat{f}[n] < \infty$~\cite[Theorem 1]{boas1966fourier}. 
		Consider the function 
		\begin{equation}
		f(x) = \widehat{((-\Delta)^{\gamma/2})^{\dagger}}(x) = 2  \sum_{k\geq 1}\frac{e_k(x)}{k^\gamma}, \qquad \forall x \in \T.
		\end{equation}
		Then, $\sum_{k \geq 1} k^{\alpha -1} \widehat{f}[n] = \sum_{k \geq 1} k^{- (\gamma + 1-  \alpha)}$, which is infinite as soon as $\gamma \leq \alpha$. Applying the contraposition of \cite[Theorem 1]{boas1966fourier}, we therefore deduce that $f$ is discontinuous, hence $ (-\Delta)^{\gamma/2}$ is not spline-admissible.
		
		
		For the Mat\'ern and Wendland operators, we  remark that their growth order is at least equal to $2$ according to Propositions \ref{prop:maternop} and \ref{prop:wendlanderie}, implying the sampling-admissibility.
		
		\end{proof}
		
		\subsubsection{Sampling Admissibility of multivariate Operators}
		
		The ambiant dimension is $d\geq 1$. We evaluate the sampling admissibility of the separable and isotropic operators introduced above.
		
		\begin{proposition}\phantom{test}
		\begin{itemize}
\item		Let $\Lop_i$ be $d$ univariate spline-admissible operators with trivial null space. Then, the separable multivariate spline-admissible operator $\Lop = \prod_{i=1}^d \Lop_i$ is sampling-admissible if and only if each $\Lop_i$ is. In particular, $\prod_{i=1}^d (\Dop_i - \alpha_i \mathrm{Id})^{\gamma_i}$ is sampling-admissible if and only if $\gamma_i > 1$ for any $i= 1 , \ldots , d$. 
		
\item		Let $\gamma \geq 0$. Then, we have the relations: 
		\begin{equation}\label{eq:twoequivalences}
		\gamma > d \Longrightarrow (-\Delta)^{\gamma/2} \text{ is sampling-admissible } \Longleftrightarrow (\alpha^2 \mathrm{Id} -\Delta)^{\gamma/2} \text{ is sampling-admissible }. 
		\end{equation}
		\end{itemize}
		\end{proposition}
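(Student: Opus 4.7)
The plan is to apply Proposition \ref{prop:conditionsampling} throughout, reducing sampling-admissibility to continuity of $\Kop\Sha$, and then handle the two parts by different Fourier-side arguments.

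For the separable statement, the trivial null space assumption on each $\Lop_i$ gives $\Kop = \Lop^{-1}$ with Fourier sequence factorising as $\widehat{L^\dagger}[\bm{k}] = \prod_{i=1}^d \widehat{L_i^\dagger}[k_i]$ by \eqref{eq:Lopseparable}. Uniqueness of the Fourier decomposition in $\Sp'(\T^d)$ lifts this to the spatial identity $\Kop\Sha(\bm{x}) = \prod_{i=1}^d (\Kop_i \Sha)(x_i)$. Since a non-degenerate tensor product of univariate periodic functions is continuous on $\T^d$ if and only if each factor is continuous on $\T$, Proposition \ref{prop:conditionsampling} yields the desired equivalence. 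The specialization to $\prod_{i=1}^d (\Dop_i - \alpha_i \Id)^{\gamma_i}$ (with $\alpha_i$ implicitly chosen so that the univariate null spaces are trivial) then reduces to applying Proposition \ref{prop:1dwhoissamplingOK} in each variable.

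For the isotropic statement, both $(-\Delta)^{\gamma/2}$ and $(\alpha^2 \Id - \Delta)^{\gamma/2}$ have spectral growth $\gamma$, so the implication $\gamma > d \Rightarrow$ sampling-admissibility follows directly from the $\ell^1$-criterion of Proposition \ref{prop:conditionsamplingbis}. For the equivalence, I would mimic the one-dimensional argument \eqref{eq:argufracandexpo} by writing, for $\bm{k} \neq \bm{0}$,
\[
\widehat{L_2^\dagger}[\bm{k}] - \widehat{L_1^\dagger}[\bm{k}] = \|\bm{k}\|^{-\gamma}\left[\left(1 + \alpha^2/\|\bm{k}\|^2\right)^{-\gamma/2} - 1\right] = O\left(\|\bm{k}\|^{-(\gamma+2)}\right).
\]
This Fourier sequence is absolutely summable on $\Z^d$ whenever $\gamma > d - 2$, which places the difference $\Kop_2\Sha - \Kop_1\Sha$ in the Wiener algebra and therefore in $\mathcal{C}(\T^d)$, yielding the equivalence. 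Combined with the complementary regime $\gamma \leq d/2$, where Proposition \ref{prop:conditionsamplingbis} excludes sampling-admissibility for both operators and trivialises the equivalence, this already covers all ambient dimensions $d \leq 4$.

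The main technical hurdle is the intermediate window $d/2 < \gamma \leq d-2$, non-empty only for $d \geq 5$, where neither shortcut applies directly. To close this gap, I would iterate the binomial expansion $(1 + \alpha^2/\|\bm{k}\|^2)^{-\gamma/2} = \sum_{j=0}^N c_j\alpha^{2j}\|\bm{k}\|^{-2j} + O(\|\bm{k}\|^{-2(N+1)})$ to order $N$ large enough for the remainder to be summable, identifying the finite-sum correction with a linear combination of Fourier sequences of pseudoinverse Dirac combs for $(-\Delta)^{(\gamma+2j)/2}$ whose spectral growth $\gamma+2j$ eventually exceeds $d$. An alternative, likely cleaner, strategy is to compare $\Kop_i\Sha$ locally near the identity with the corresponding Riesz and Bessel kernels on $\R^d$: these share identical singularity orders at the origin, so their periodic counterparts inherit the same continuity threshold, settling the equivalence in one stroke.
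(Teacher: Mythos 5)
Your treatment of the separable statement and of the implication $\gamma > d \Rightarrow$ sampling-admissibility coincides with the paper's proof: the Green's function of $\prod_{i=1}^d \Lop_i$ factorizes as $g_{\Lop}(\bm{x}) = \prod_{i=1}^d g_{\Lop_i}(x_i)$, continuity of a non-degenerate tensor product is equivalent to continuity of each factor, Proposition \ref{prop:1dwhoissamplingOK} settles the exponential case, and Proposition \ref{prop:conditionsamplingbis} gives the implication from the spectral growth. For the equivalence in \eqref{eq:twoequivalences}, the paper merely asserts that, as in \eqref{eq:argufracandexpo}, the difference of the two Green's functions ``readily'' has a summable Fourier sequence. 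You are right to flag that the bound $O(\lVert \bm{k}\rVert^{-(\gamma+2)})$ only yields summability for $\gamma > d-2$, and your use of the $\ell_2$ criterion of Proposition \ref{prop:conditionsamplingbis} to dispose of the regime $\gamma \le d/2$ correctly covers all $d \le 4$. Up to this point you are, if anything, more careful than the paper.

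The difficulty lies in your first proposed repair of the window $d/2 < \gamma \le d-2$. The binomial expansion rewrites the difference of the Green's functions of $(\alpha^2\Id-\Delta)^{\gamma/2}$ and $(-\Delta)^{\gamma/2}$ as $\sum_{j=1}^N c_j\alpha^{2j} g_j + r$ with $r$ continuous and $g_j$ the Green's function of $(-\Delta)^{(\gamma+2j)/2}$. The terms with $\gamma+2j > d$ are indeed continuous, but the low-order terms with $d/2 < \gamma+2j \le d$ --- and $j=1$ is always of this type throughout the window --- are precisely the objects whose continuity is unresolved: it is the paper's own conjecture, stated in the remark following this proposition, that they are \emph{not} continuous. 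The expansion therefore relocates the difficulty rather than closing it; worse, if that conjecture holds, the leading correction $c_1\alpha^2 g_1$ is unbounded at the origin and the difference of the two Green's functions is itself discontinuous for $\gamma < d-2$, so no refinement of the ``show the difference is continuous'' strategy can succeed there. In that window the equivalence has to be obtained by proving that \emph{both} Green's functions are discontinuous (so that both operators fail to be sampling-admissible), for instance along the lines of your second suggestion of matching the local singularity $\lVert\bm{x}\rVert^{\gamma-d}$ of the periodic kernels with that of the Riesz and Bessel kernels on $\R^d$ --- but that argument is only sketched and would need to be carried out. To be fair, the paper's own one-line justification of the equivalence suffers from exactly the same defect for $d \ge 5$; your proposal has the merit of making the gap visible, but it does not yet close it.
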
		
		
		\begin{proof}
		The Green's function of the separable operator $\Lop$ is $g_{\Lop}(\bm{x}) = g_{\Lop_1} (x_1) \ldots g_{\Lop_d}(x_d)$ for every $\bm{x} = (x_1, \ldots, x_d) \in \T^d$. Then, $g_{\Lop} \in \mathcal{C}(\T^d)$ if and only if each $g_{\Lop_i} \in \mathcal{C}(\T)$ for $i=1, \ldots , d$. Applying this principle to $L_i = (\Dop - \alpha_i \mathrm{Id})^{\gamma_i}$, which is sampling-admissible if and only if $\gamma_i > 1$ according to Proposition \ref{prop:1dwhoissamplingOK}, gives the second result.
		
		Let $\gamma > d$. Then, $(-\Delta)^{\gamma/2}$ has a growth order $\gamma > d$ and is therefore sampling-admissible using Proposition \ref{prop:conditionsamplingbis}.
		The  equivalence in \eqref{eq:twoequivalences} follows from an argument identical to \eqref{eq:argufracandexpo}: we readily show that the difference of the two Green's functions has a summable Fourier series and is therefore continuous.  
		\end{proof}
		
		\textit{Remark.} 
		The case of multidimensional Fourier series is more evolved than for the univariate case~\cite[Chapter XVII]{zygmund2002trigonometric} and the literature on this subject is much less developed (see \cite{shapiro2011fourier} for an extensive discussion on these topics). In particular, the arguments we used in Proposition \ref{prop:1dwhoissamplingOK} for the equivalence between the sampling-admissibility of $(-\Delta)^{\gamma/2}$ and $\gamma > 1$ are not directly applicable. We conjecture however that the $d$-dimensional generalization of this result is true. That is, $\gamma > d$ if and only if $(-\Delta)^{\gamma/2}$ is sampling-admissible. 

	\subsection{Square-Integrable Measurement Functions} \label{sec:spatialsampling}
	
	We provide a simple characterization of the spline-admissible operators for which the space of square-integrable measurement functions is included in the measurement space in Proposition \ref{prop:squaremeasurement}. 
	
	\begin{proposition}
	\label{prop:squaremeasurement}
		Let $\Lop$ be a spline-admissible opeartor with pseudoinverse $\Kop$. Then, the following equivalences hold:		
		\begin{equation}
		\mathcal{L}_2(\T^d) \subseteq \CL(\T^d) \quad \Longleftrightarrow \quad  \Kop \Sha \in \mathcal{L}_2(\T^d) \quad \Longleftrightarrow  \quad (\widehat{L^{\dagger}}[\bm{k}])_{\bm{k}\in \Z^d} \in \ell_2(\Z^d). 
		\end{equation}	
		More generally, we have the following equivalences:
		\begin{equation}
		 \mathcal{H}_{2}^{-\tau}(\T^d) \subseteq \CL(\T^d) \quad \Longleftrightarrow \quad  \Kop \Sha \in  \mathcal{H}_{2}^{\tau}(\T^d) 
		 \quad \Longleftrightarrow  \quad  ( \lVert \bm{k} \rVert^\tau \widehat{L^{\dagger}}[\bm{k}])_{\bm{k}\in \Z^d} \in \ell_2(\Z^d)
		\end{equation}
		for any $\tau \in \R$, with $ \mathcal{H}_{2}^{\tau}(\T^d) $ the periodic Sobolev space of smoothness $\tau$ defined in \eqref{eq:sobolevspace}.
	\end{proposition}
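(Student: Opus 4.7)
The first chain of equivalences is the case $\tau=0$ of the second (with $\mathcal{H}_2^0(\T^d) = \mathcal{L}_2(\T^d)$), so the plan is to prove the general statement for arbitrary $\tau \in \R$ by cycling through the three conditions. The equivalence $\Kop\Sha \in \mathcal{H}_2^\tau(\T^d) \iff (\lVert\bm{k}\rVert^\tau \widehat{L^\dagger}[\bm{k}])_{\bm{k}} \in \ell_2(\Z^d)$ is immediate from Definition~\ref{def:green}: $\Kop\Sha$ has Fourier sequence $(\widehat{L^\dagger}[\bm{k}])_{\bm{k}}$ (vanishing on the finite set $N_\Lop$), and the observation $(1+\lVert\bm{k}\rVert^2)^\tau \asymp \lVert\bm{k}\rVert^{2\tau}$ for large $\lVert\bm{k}\rVert$ shows the summability condition from \eqref{eq:sobolevspace} is equivalent to the stated $\ell_2$-condition, differing only by finitely many low-frequency terms.

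To derive $\mathcal{H}_2^{-\tau}(\T^d) \subseteq \CL(\T^d)$ from $\Kop\Sha \in \mathcal{H}_2^\tau(\T^d)$, I would pick $g \in \mathcal{H}_2^{-\tau}$ and apply Cauchy-Schwarz to the Fourier coefficients:
\begin{equation*}
\sum_{\bm{k} \in \Z^d} \left\lvert \widehat{L^\dagger}[\bm{k}] \widehat{g}[\bm{k}] \right\rvert \leq \lVert \Kop\Sha \rVert_{\mathcal{H}_2^\tau} \cdot \lVert g \rVert_{\mathcal{H}_2^{-\tau}} < \infty.
\end{equation*}
Since $\Kop^*$ acts on Fourier series with multiplier $\overline{\widehat{L^\dagger}[\bm{k}]}$, this absolute summability forces the Fourier series of $\Kop^* g$ to converge uniformly to a continuous function, so $\Kop^* g \in \mathcal{C}(\T^d)$ and $g \in \CL(\T^d)$.

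For the converse direction, I would assume $\mathcal{H}_2^{-\tau}(\T^d) \subseteq \CL(\T^d)$ as sets and first upgrade this to a continuous Banach embedding via the closed graph theorem: both spaces embed continuously into the Hausdorff space $\Sp'(\T^d)$ (by the Sobolev chain preceding \eqref{eq:sobolevspace} and by Theorem~\ref{theo:whatisCL}), so uniqueness of limits in $\Sp'(\T^d)$ forces the graph of the inclusion to be closed. Composing with the point-evaluation $g \mapsto (\Kop^* g)(\bm{0})$, which is continuous on $\CL(\T^d)$ by the trivial bound $\lvert(\Kop^* g)(\bm{0})\rvert \leq \lVert \Kop^* g\rVert_\infty \leq \lVert g \rVert_{\CL,q}$, I would obtain a continuous linear functional on the Hilbert space $\mathcal{H}_2^{-\tau}(\T^d)$. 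Evaluating on smooth test functions (which form a dense subspace), this functional has the explicit form $g \mapsto \sum_{\bm{k}} \overline{\widehat{L^\dagger}[\bm{k}]} \widehat{g}[\bm{k}]$, so under the standard $L_2$-duality identification of $(\mathcal{H}_2^{-\tau})'$ with $\mathcal{H}_2^\tau$ its representer is precisely $\Kop\Sha$. Continuity of the functional therefore forces $\Kop\Sha \in \mathcal{H}_2^\tau(\T^d)$, closing the cycle.

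The only real subtlety is the closed-graph upgrade in the final step, which requires a common Hausdorff topological vector space containing both candidate embeddings—here $\Sp'(\T^d)$ with its weak* topology plays this role. All other ingredients are routine: Cauchy-Schwarz for absolute convergence of Fourier series, Riesz representation in a Hilbert space, and the defining summability of the Sobolev norm.
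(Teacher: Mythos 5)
Your proof is correct. The forward direction (from $\Kop\Sha\in\mathcal{H}_2^\tau$ to the embedding) and the Parseval-type equivalence are essentially identical to the paper's argument: the same Cauchy--Schwarz estimate on Fourier coefficients followed by uniform convergence of the truncated series to establish continuity of $\Kop^* g$. Where you genuinely diverge is the converse. The paper assumes $\mathcal{L}_2(\T^d)\subseteq\CL(\T^d)$, invokes the density of $\Sp(\T^d)$ in $\CL(\T^d)$ (Theorem~\ref{theo:whatisCL}) to make the embedding dense, dualizes it using the global identification $(\CL(\T^d))'=\ML(\T^d)$ from Theorem~\ref{theo:RieszMarkovgeneralized} to get $\ML(\T^d)\subseteq\mathcal{L}_2(\T^d)$, and then concludes because $\Kop\Sha\in\ML(\T^d)$. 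You instead isolate the single continuous functional $g\mapsto(\Kop^* g)(\bm{0})$, pull it back through the inclusion, and read off $\Kop\Sha$ as its Riesz representer in the Hilbert space $\mathcal{H}_2^{-\tau}(\T^d)$. Your route is more self-contained---it does not need the generalized Riesz--Markov theorem at all, only the elementary bound $\lvert(\Kop^* g)(\bm{0})\rvert\leq\lVert g\rVert_{\CL,q}$---and it has the merit of making explicit the closed-graph upgrade from set inclusion to continuous embedding, a step the paper uses implicitly when it dualizes (an inclusion of duals requires the original inclusion to be continuous, not merely set-theoretic). The paper's route is shorter once its duality machinery is in place and identifies the larger structural fact $\ML(\T^d)\subseteq\mathcal{L}_2(\T^d)$ along the way, whereas yours extracts only the single membership $\Kop\Sha\in\mathcal{H}_2^\tau(\T^d)$ that the statement actually requires. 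You also treat general $\tau$ directly rather than proving $\tau=0$ and asserting the general case is identical; both are fine.
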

	
	\begin{proof}
	The proof for periodic Sobolev spaces works identically, therefore we first prove the first part of Proposition \ref{prop:squaremeasurement}. 
	The second equivalence is simply due to the Parseval relation. We therefore focus on the first one. \\
	
	$\Longleftarrow$ Assume first that $\Kop \Sha \in \mathcal{L}_2(\T^d)$. 
	Then, we have that $\lVert \widehat{L^\dagger} \rVert_{\ell_2}^2 = \sum_{\bm{k}\in \Z^d} | \widehat{L^{\dagger}}[\bm{k}]|^2 < \infty$. Let $f \in \mathcal{L}_2(\T^d)$. Then,
	\begin{align} \label{eq:equationforcontinuity1}
		\lVert f \rVert_{\CL}^2 &= \lVert \Kop^* f \rVert_\infty^2 + \lVert \mathrm{Proj}_{\NL} f \rVert_2^2 \in [0,\infty].
	\end{align}
	Since $\mathrm{Proj}_{\NL}$ is an orthogonal projector, we have $\lVert \mathrm{Proj}_{\NL} f \rVert_2^2 \leq \lVert f \rVert_2^2$. Moreover, we have
	\begin{align} \label{eq:lasteqpkopf}
	  \lVert \Kop^* f \rVert_\infty &= \sup_{\bm{x}\in\T^d} \left\lvert \sum_{\bm{k}\in \Z^d} \overline{\widehat{L^\dagger} [\bm{k}] } \widehat{f}[\bm{k}] e_{\bm{k}} (\bm{x}) \right\rvert \leq \sum_{\bm{k}\in \Z^d} \lvert  \widehat{L^\dagger} [\bm{k}]  \rvert \lvert  \widehat{f}[\bm{k}] \rvert \leq \lVert \widehat{L^{\dagger}} \rVert_{\ell_2} \lVert \widehat{f} \rVert_{\ell_2} = \lVert \Kop \Sha \rVert_2 \lVert f \rVert_2 , 
	\end{align}	
	where we used Cauchy-Schwarz in the last inequality and the Parseval relation in the last equality in \eqref{eq:lasteqpkopf}. 
	This shows that $\Kop^* f \in L_{\infty}(\T^d)$. Let us show moreover that $\Kop^* f  \in \mathcal{C}(\T^d)$. 
	We define $g_K = \sum_{\lVert \bm{k} \rVert \leq K} \widehat{L^{\dagger}}[\bm{k}] \widehat{f} [\bm{k}] e_{\bm{k}}$, which is the truncated Fourier series of $\Kop^* f$. The functions $g_K$ are continuous (and even infinitely differentiable). Moreover, we have that, for any $\bm{x} \in \T^d$, 
	\begin{equation} \label{eq:partialsumforcontinuity}
	\lvert \Kop^* f (\bm{x}) - g_K(\bm{x}) \rvert 
	=
	\left\lvert \sum_{\lVert \bm{k} \rVert > K}  \overline{\widehat{L^{\dagger}}[\bm{k}] } \widehat{f} [\bm{k}] e_{\bm{k}} (\bm{x}) \right\rvert
	\leq 
	\sum_{\lVert \bm{k} \rVert > K}  \lvert \widehat{L^{\dagger}}[\bm{k}]  \rvert \lvert \widehat{f} [\bm{k}] \rvert
	\leq 
	\left(\sum_{\lVert \bm{k} \rVert > K}  \lvert \widehat{L^{\dagger}}[\bm{k}]  \rvert^2 \right)^{1/2} \left(\sum_{\lVert \bm{k} \rVert > K}  \lvert \widehat{f} [\bm{k}] \rvert^2\right)^{1/2}
	\end{equation}
	where we used that $\lvert e_{\bm{k}}(\bm{x}) \rvert \leq 1$ and the Cauchy-Schwarz inequality. Both $\sum_{\lVert \bm{k} \rVert > K}  \lvert \widehat{L^{\dagger}}[\bm{k}]  \rvert^2$ and $\sum_{\lVert \bm{k} \rVert > K}  \lvert \widehat{f} [\bm{k}] \rvert^2$ and \eqref{eq:partialsumforcontinuity} holds for any $\bm{x}\in \T^d$, hence $\lVert  \Kop^* f   - g_K\rVert_\infty \rightarrow 0$ when $K\rightarrow 0$. Then, $ \Kop^* f $ is the limit of the continuous functions $g_K$ for the uniform convergence, and is therefore continuous. In particular, $f \in \CL(\T^d)$ showing the set inclusion $\mathcal{L}_2(\T^d) \subset \CL(\T^d)$. 
	
	Using \eqref{eq:equationforcontinuity1} and \eqref{eq:lasteqpkopf}, we moreover deduce that
	\begin{equation}
	\lVert f \rVert_{\CL}  \leq (1 + \lVert \Kop \Sha \rVert_2^2)^{1/2} \lVert f \rVert_2, 
	\end{equation}
	which, together with the set inclusion $\mathcal{L}_2(\T^d) \subset \CL(\T^d)$, implies the topological embedding $\mathcal{L}_2(\T^d) \subseteq \CL(\T^d)$. \\
	
	 $\Longrightarrow$  Assume now that $\mathcal{L}_2(\T^d) \subseteq \CL(\T^d)$. Moreover, we know with Theorem \ref{theo:whatisCL} that $\Sp(\T^d)$ is dense in $ \CL(\T^d)$. This implies that the embedding $\mathcal{L}_2(\T^d) \subseteq \CL(\T^d)$ is also dense, from which we deduce the topological embedding $\ML(\T^d) \subseteq \mathcal{L}_2(\T^d)$, using that the $(\mathcal{L}_2(\T^d))' = \mathcal{L}_2(\T^d)$ and $(\CL(\T^d))' = \ML(\T^d)$ due to Theorem \ref{theo:RieszMarkovgeneralized}. Finally, since $\Kop \Sha \in \ML(\T^d)$ (because $\Lop \Kop \Sha = \Sha + \Proj_{\NL} \Sha \in \mathcal{M}(\T^d)$), we conclude that $\Kop \Sha \in \mathcal{L}_2(\T^d)$ as expected. 
	\end{proof}

	We say that a spline-admissible operator is 	\emph{$\mathcal{L}_2$-admissible} is its measurement space contains the square-integrable functions.
	In particular, a {$\mathcal{L}_2$-admissible} operator admits indicator functions as valid measurement functions.
	From the previous results, we deduce that a sampling admissible operator is necessarily $\mathcal{L}_2$-admissible. Indeed, the sampling admissibility implies that $\widehat{L}^\dagger \in \ell_2(\Z^d)$ (second part of Proposition \ref{prop:conditionsamplingbis}), which is equivalent to the $\mathcal{L}_2$-admissibility with Proposition \ref{prop:squaremeasurement}.
	The next corollary reveals which pseudo-differential operators introduced in Section \ref{sec:differentialop} are {$\mathcal{L}_2$-admissible}. 
		
	\begin{corollary} \label{coro:L2admonpseudodiff}
	Let $\gamma \geq 0$, $\alpha \in \C$. 
	Then, the univariate spline-admissible operators $\mathrm{D}^\gamma$ and $(\mathrm{D} + \alpha \mathrm{Id})^\gamma$ are $\mathcal{L}_2$-admissible if and only if $\gamma > 1/2$.
	In any ambiant dimension $d\geq 1$, the multivariate spline-admissible operators  $(-\Delta)^{\gamma/2}$ and $(\alpha^2 \mathrm{Id} -\Delta)^{\gamma/2}$ are $\mathcal{L}_2$-admissible   if and only if $\gamma > d/2$.
	Finally, Mat\'ern and Wendland operators are $\mathcal{L}_2$-admissible. 
	\end{corollary}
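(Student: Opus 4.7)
The plan is to derive every claim in the corollary from Proposition \ref{prop:squaremeasurement}, which identifies $\mathcal{L}_2$-admissibility of a spline-admissible operator $\Lop$ with the single condition that its pseudoinverse Fourier sequence $(\widehat{L^\dagger}[\bm{k}])_{\bm{k}\in\Z^d}$ lies in $\ell_2(\Z^d)$. All Fourier sequences in question have already been computed in Section \ref{sec:differentialop}, so the task reduces to determining for each family exactly when $\sum_{\bm{k}\in\Z^d} |\widehat{L^\dagger}[\bm{k}]|^2 < \infty$.

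For the univariate pair $\mathrm{D}^\gamma$ and $(\mathrm{D}+\alpha\mathrm{Id})^\gamma$, the pseudoinverse symbols are $(\mathrm{i}k)^{-\gamma}$ and $(\mathrm{i}k+\alpha)^{-\gamma}$ respectively (with the value $0$ at any integer where the symbol vanishes). Both moduli satisfy $|\widehat{L^\dagger}[k]|=\Theta(|k|^{-\gamma})$ as $|k|\to\infty$, so the $\ell_2$-summability is equivalent to convergence of the $p$-series $\sum_{k\geq 1} k^{-2\gamma}$, which holds iff $\gamma>1/2$. This gives both directions of the first claim simultaneously, without needing to separate the two operators, since the difference of their Fourier sequences behaves like $O(|k|^{-\gamma-1})$ and is therefore always in $\ell_2$ once $\gamma>1/2$.

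For the multivariate case $(-\Delta)^{\gamma/2}$ and $(\alpha^2\mathrm{Id}-\Delta)^{\gamma/2}$ in ambient dimension $d$, the pseudoinverse coefficients are $\|\bm{k}\|^{-\gamma}$ and $(\alpha^2+\|\bm{k}\|^2)^{-\gamma/2}$, both of modulus $\Theta(\|\bm{k}\|^{-\gamma})$ for large $\|\bm{k}\|$. The $\ell_2$-summability of the resulting lattice zeta-type series $\sum_{\bm{k}\neq \bm{0}} \|\bm{k}\|^{-2\gamma}$ is then settled by a shell-counting comparison with the radial integral $\int_1^\infty r^{d-1-2\gamma}\,\mathrm{d}r$, which converges iff $d-1-2\gamma<-1$, i.e. iff $\gamma>d/2$. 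This will be the main technical step of the proof, although it is classical; both the upper and lower bounds needed for the sharp threshold follow from dividing $\Z^d\setminus\{\bm{0}\}$ into dyadic annuli and using that the annulus $\{n\leq\|\bm{k}\|<n+1\}$ contains $\Theta(n^{d-1})$ lattice points.

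Finally, for the Mat\'ern and Wendland operators (both in the univariate setting $d=1$), Propositions \ref{prop:maternop} and \ref{prop:wendlanderie} furnish the two-sided estimate \eqref{tight_bound}, giving spectral growth $\gamma=2(\beta-1/2)$ with $\beta>1$ in the Mat\'ern case and $\beta\geq 2$ in the Wendland case. In both situations $\gamma>1=d$, so Proposition \ref{prop:conditionsamplingbis} already establishes sampling-admissibility, and the remark immediately preceding the corollary promotes this to $\mathcal{L}_2$-admissibility. No further computation is needed here, which is why this last bullet is essentially a bookkeeping consequence of earlier results.
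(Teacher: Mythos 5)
Your proposal is correct and follows essentially the same route as the paper: reduce everything to the $\ell_2$-summability criterion of Proposition \ref{prop:squaremeasurement}, settle the fractional/Sobolev families by the convergence threshold of the lattice series $\sum_{\bm{k}\neq\bm{0}}\lVert\bm{k}\rVert^{-2\gamma}$, and dispatch the Mat\'ern and Wendland cases by noting that sampling-admissibility (spectral growth exceeding $d$) already implies $\mathcal{L}_2$-admissibility. You merely spell out the shell-counting comparison that the paper leaves implicit.
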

	
	\begin{proof}
		The proof is very simple using Proposition \ref{prop:squaremeasurement}, which implies that $\Lop$ is $\mathcal{L}_2$-admissible if and only if $\sum_{\bm{k}\in\Z^d} | \widehat{L^\dagger}[\bm{k}] |^2 < \infty$. For instance, the Sobolev operator $\Lop_{\gamma,\alpha} = (\alpha^2 \mathrm{Id} - \Delta)^{\gamma/2}$ is such that 
		\begin{equation}
		\sum_{\bm{k}\in\Z^d} | \widehat{L_{\gamma,\alpha}^\dagger}[\bm{k}] |^2 = 
		\sum_{\bm{k}\in\Z^d} \frac{1}{(\alpha^2 + \lVert \bm{k}\rVert^2)^{\gamma}},
	\end{equation}
	which is finite if and only if $2 \gamma > d$, as expected. Finally, the Mat\'ern and Wendland operators are $\mathcal{L}_2$-admissible as any sampling-admissible operators. 
	\end{proof}
	
	The results of Proposition \ref{prop:squaremeasurement} and Corollary \ref{coro:L2admonpseudodiff} are consistent with \cite[Proposition 8]{simeoni2020functionalpaper}, which obtains similar but partial results over the $d$-dimensional sphere $\mathbb{S}^d$. The two main differences are that \cite{simeoni2020functionalpaper} only provides a \emph{sufficient} condition for the set inclusion $\mathcal{L}_2(\mathbb{S}^d) \subset \CL(\mathbb{S}^d)$, and for a specific class of spline-admissible operators.

	\section{Discussion and Conclusion} \label{sec:discussion}
	For the sake of simplicity, we restrict our attention in this section to the case where the regularizing spline-admissible operator $\Lop$ has a trivial null space, that is, $\NL = \{0\}$. The pseudoinverse is then an inverse.
	
		\subsection{Practical Discretization Schemes} \label{sec:algo}
Theorem \ref{theo:RT} can be used to derive canonical discretization schemes for the optimization problem \eqref{eq:optiwellstated}. Indeed, the representer theorem tells us that the extreme point solutions of \eqref{eq:optiwellstated} take the form of periodic $\Lop$-splines with sparse innovations---\emph{i.e.}, less innovations then available data. One idea for solving \eqref{eq:optiwellstated} in practice consists then in discretizing it by replacing the function $f : \R^d \rightarrow \R$  by a non-uniform $\Lop$-spline with unknown knots and weights. The spline innovations must then be estimated from the data. While the spline weights can be recovered from the measurements using a convex optimization problem, the same is not true for the knots, which are consequently much harder to estimate.

To circumvent this issue, one strategy consists in considering overparametrised uniform splines with knots chosen over a very fine uniform grid to approximate the non-uniform splines with unknown knots. The weights are then recovered by solving a discrete penalized basis pursuit problem using state-of-the-art proximal algorithms such as the ones discussed in \cite[Section 5.1]{simeoni2020functionalpaper}. Such discretization schemes were investigated and analyzed in \cite[Section V.B]{gupta2018continuous} and \cite[Section 5.1]{simeoni2020functionalpaper} in the Euclidean and spherical setting respectively. Extensions to B-splines and multiresolution grids were also considered in \cite{Debarre2019}. 
While conceptually simple, this approach is computationally wasteful since the approximating uniform spline typically has much more innovations than the number of measurements.

As a potential cure to this issue, one could consider meshfree algorithms capable of directly recovering the non uniform knots in the continuum. Candidate reconstruction algorithms include the Cadzow Plug-and-Play Gradient Descent (CPGD) algorithm \cite{simeoni2020cpgd} as well as the Franck-Wolfe algorithm and its variants \cite{denoyelle2019sliding,flinth2020linear}. Both algorithms have been successfully used for the reconstruction of periodic Dirac streams. To the best of our knowledge however, they have not yet been tested for the purpose of reconstructing spline knots, and would therefore need to be adapted for this specific purpose. 


		\subsection{Comparison with Generalized Periodic Tikhonov Regularization}
		We compare here the solutions of the periodic TV-regularized problem \eqref{eq:optiwellstated}  to its analog with quadratic Tikhonov regularization considered in~\cite{Badou}. The latter takes the form
		\begin{equation} \label{eq:L2optipb}
		{\min} \ E(\bm{y}, \bm{\nu} (f ) )  +   \lambda \lVert \Lop f \rVert^2_{\mathcal{L}_2},
		\end{equation}
		where  $E$ is a cost function sharing the same properties as in Theorem \ref{theo:RT} and $\Lop$ is a spline-admissible operator. 			
According to \cite[Theorem 1]{Badou} the solution of \eqref{eq:L2optipb} is unique  and of the form
					\begin{equation}  \label{eq:uniquedudeL2}
					f_{\mathrm{opt}} (x) = \sum_{m=1}^M a_m ( h_{\Lop} * \nu_m (x))  ,
					\end{equation}
					where $h_{\Lop} =(\Lop^* \Lop)^{-1}\{\Sha\}= \sum_{k\in \Z} \frac{e_k}{\lvert \widehat{L}[k]|^2}$. 
			The main differences between the two settings are then as follows:
			\begin{itemize}
\item			 For Tikhonov regularization the solution is unique, which is not the case in general for \eqref{eq:optiwellstated}, as revealed by  Theorem \ref{theo:RT}. Tikhonov regularization	is hence a more effective regularization strategy when it comes to enforcing uniqueness of the solution.
\item		The unique solution of \eqref{eq:L2optipb} lives in a finite dimensional space of dimension $M$ generated by the functions $\{h_{\Lop} * \nu_m, \,1 \leq m \leq M\}$. This is reminiscent of kernel methods: the function to reconstruct lies in a possibly infinite dimensional Hilbert space, but the regularization procedure enforces the solution to lie in a finite-dimensional space determined by the choice of the kernel and the measurement functionals.
		In contrast, TV regularization benefits from an infinite union of finite-dimensional subsets, given by periodic $\Lop$-splines with less than $M$ knots at any possible locations.
		This is known to improve the adaptiveness of the method and yield higher accuracy estimates with sharp variations~\cite{lu2008theory,eldar2009robust}. 
		\item		Finally, for the Tikhonov optimization problem \eqref{eq:L2optipb}, the measurement functionals $\nu_m$ directly impact the form of the estimate \eqref{eq:uniquedudeL2}. For instance, with Fourier measurements, this conducts to the well-known Gibbs phenomenon and the presence of oscillations in the reconstruction (see \cite[Figure 4(a)]{gupta2018continuous}). In contrast, the form of the solutions of \eqref{eq:optiwellstated} is agnostic to the measurement process and  depends only on the chosen regularizing operator $\Lop$.  Solutions to the TV regularized optimisation problem \eqref{eq:optiwellstated} are hence less sensitive to Gibbs-like phenomena.
 \end{itemize}

		\subsection{Comparison with TV Regularization over $\R^d$}
		
		As we have seen in Section \ref{sec:intro}, many recent works have investigated the reconstruction of continuous-domain functions $f : \R^d \rightarrow \R$ from finitely many noisy measurements.
		Our paper contributes to  this global effort by considering the use of TV-based regularization norms for the reconstruction of periodic functions.
		We believe that the periodic setting has several remarkable advantages that greatly facilitate the construction of the framework. 		
				
		First, Schwartz functions over $\R^d$ mix smoothness and decay properties, while periodic Schwartz functions must only be smooth. This significantly simplifies the construction of the native space and the measurement space, as can be appreciated when comparing to the derivations in \cite{unser2019native}. In the periodic setting, we are moreover able to provide complete characterizations  of spline-admissible operators from their Fourier symbol and  of sampling-admissible operators with concrete criteria applicable to classical families of (possibly fractional) pseudo-differential operators. In both cases and to the best of our knowledge, similar results are only partially known in the non periodic setting.
		
		Second, even if splines play a central role for both the periodic and non periodic settings, the construction of the splines differs. Consequently, the form of the extreme points solutions differs. Consider for instance the univariate operator $\Lop = \Dop^N$. In the non periodic setting, an extreme-point solution has at most $(M-N_0)$-knots~\cite[Theorem 2]{Unser2017splines}. For the periodic case, extreme points solutions has at most $M$-knots whose weights satisfy the linear condition $\bm{\mathrm{M}} \bm{a} = \bm{0}$ (see Theorem \ref{theo:RT}). 
		Finally, it is worth noting that the dimension $N_0$ of the null space of $\Lop$ depends on the chosen setting: for the periodic case, $N_0=1$ when $\Lop=D^{N}$, while $N_0 = N+1$ over the real line. 
				
	\subsection{Conclusions}
	
	We presented a general framework for the reconstruction of sparse and periodic functions defined over a continuum from finitely many noisy linear measurements.
	This is achieved by using total variation based regularizations in addition to a data fidelity term.
	The main novelty of our work was to address the problem in full generality for periodic functions in a self-contained manner.
	In particular, we characterized the complete class of periodic operators and periodic measurement functionals for which a periodic representer theorem for the solution of \eqref{eq:optipb} can be obtained. 
	In a future work, we plan to work on practical aspects of the proposed periodic framework, including discretization procedures, reconstructions algorithms, and practical applications to signal processing tasks.
	
	\section*{Acknowledgments} 
	The authors are grateful to Michael Unser, Thomas Debarre, and Quentin Denoyelle for interesting discussions at the early stage of this research project. 
	Julien Fageot is supported by the Swiss National Science Foundation (SNSF) under Grant P2ELP2\_181759. For this work, Matthieu Simeoni was in part supported by the Swiss National Science Foundation grant number 200021 181978/1, “SESAM - Sensing and Sampling: Theory and Algorithms”. 

\small
\bibliographystyle{IEEEtran}
\bibliography{references}

\begin{thebibliography}{10}
\providecommand{\url}[1]{#1}
\csname url@samestyle\endcsname
\providecommand{\newblock}{\relax}
\providecommand{\bibinfo}[2]{#2}
\providecommand{\BIBentrySTDinterwordspacing}{\spaceskip=0pt\relax}
\providecommand{\BIBentryALTinterwordstretchfactor}{4}
\providecommand{\BIBentryALTinterwordspacing}{\spaceskip=\fontdimen2\font plus
\BIBentryALTinterwordstretchfactor\fontdimen3\font minus
  \fontdimen4\font\relax}
\providecommand{\BIBforeignlanguage}[2]{{%
\expandafter\ifx\csname l@#1\endcsname\relax
\typeout{** WARNING: IEEEtran.bst: No hyphenation pattern has been}%
\typeout{** loaded for the language `#1'. Using the pattern for}%
\typeout{** the default language instead.}%
\else
\language=\csname l@#1\endcsname
\fi
#2}}
\providecommand{\BIBdecl}{\relax}
\BIBdecl

\bibitem{Unser2017splines}
M.~Unser, J.~Fageot, and J.~Ward, ``Splines are universal solutions of linear
  inverse problems with generalized {TV} regularization,'' \emph{SIAM Review},
  vol.~59, no.~4, pp. 769--793, 2017.

\bibitem{Tikhonov1963solution}
A.~Tikhonov, ``On the solution of ill-posed problems and the regularization
  method,'' \emph{Soviet Meth. Dokl.}, vol.~4, pp. 1035--1038, 1963.

\bibitem{Hoerl1962ridge}
A.~Hoerl, ``Application of the ridge analysis to regression problems,''
  \emph{Chemical Engineering Progress}, vol.~58, no.~3, pp. 54--59, 1962.

\bibitem{tibshirani1996regression}
R.~Tibshirani, ``Regression shrinkage and selection via the lasso,''
  \emph{Journal of the Royal Statistical Society: Series B (Methodological)},
  vol.~58, no.~1, pp. 267--288, 1996.

\bibitem{chen2001atomic}
S.~Chen, D.~Donoho, and M.~Saunders, ``Atomic decomposition by basis pursuit,''
  \emph{SIAM review}, vol.~43, no.~1, pp. 129--159, 2001.

\bibitem{Donoho2006}
D.~Donoho, ``Compressed sensing,'' \emph{IEEE Transactions on Information
  Theory}, vol.~52, no.~4, pp. 1289--1306, 2006.

\bibitem{Candes2006sparse}
E.~Cand{\`e}s, J.~Romberg, and T.~Tao, ``Robust uncertainty principles: Exact
  signal reconstruction from highly incomplete frequency information,''
  \emph{IEEE Transactions on Information Theory}, vol.~52, no.~2, pp. 489--509,
  2006.

\bibitem{chandrasekaran2012convex}
V.~Chandrasekaran, B.~Recht, P.~Parrilo, and A.~Willsky, ``The convex geometry
  of linear inverse problems,'' \emph{Foundations of Computational
  mathematics}, vol.~12, no.~6, pp. 805--849, 2012.

\bibitem{eldar2012compressed}
Y.~Eldar and G.~Kutyniok, \emph{Compressed Sensing: Theory and
  Applications}.\hskip 1em plus 0.5em minus 0.4em\relax Cambridge University
  Press, 2012.

\bibitem{Foucart2013mathematical}
S.~Foucart and H.~Rauhut, \emph{A mathematical introduction to compressive
  sensing}.\hskip 1em plus 0.5em minus 0.4em\relax Birkh{\"a}user Basel, 2013,
  vol.~1.

\bibitem{Adcock2015generalized}
B.~Adcock and A.~Hansen, ``Generalized sampling and infinite-dimensional
  compressed sensing,'' \emph{Foundations of Computational Mathematics}, pp.
  1--61, 2015.

\bibitem{Adcock2017breaking}
B.~Adcock, A.~Hansen, C.~Poon, and B.~Roman, ``Breaking the coherence barrier:
  A new theory for compressed sensing,'' in \emph{Forum of Mathematics, Sigma},
  vol.~5.\hskip 1em plus 0.5em minus 0.4em\relax Cambridge University Press,
  2017.

\bibitem{eldar2008compressed}
Y.~Eldar, ``Compressed sensing of analog signals in shift-invariant spaces,''
  \emph{arXiv preprint arXiv:0806.3332}, 2008.

\bibitem{unser2016representer}
M.~Unser, J.~Fageot, and H.~Gupta, ``Representer theorems for
  sparsity-promoting $\ell_{1}$ regularization,'' \emph{{IEEE} Transactions on
  Information Theory}, vol.~62, no.~9, pp. 5167--5180, September 2016.

\bibitem{Traonmilin2017compressed}
Y.~Traonmilin, G.~Puy, R.~Gribonval, and M.~Davies, ``Compressed sensing in
  hilbert spaces,'' in \emph{Compressed Sensing and its Applications}.\hskip
  1em plus 0.5em minus 0.4em\relax Springer, 2017, pp. 359--384.

\bibitem{Bodmann2018compressed}
B.~Bodmann, A.~Flinth, and G.~Kutyniok, ``Compressed sensing for analog
  signals,'' \emph{arXiv preprint arXiv:1803.04218}, 2018.

\bibitem{marz2020sampling}
M.~M{\"a}rz, C.~Boyer, J.~Kahn, and P.~Weiss, ``Sampling rates for
  $\ell_1$-synthesis,'' \emph{arXiv preprint arXiv:2004.07175}, 2020.

\bibitem{Vetterli2002FRI}
M.~Vetterli, P.~Marziliano, and T.~Blu, ``Sampling signals with finite rate of
  innovation,'' \emph{{IEEE} Transactions on Signal Processing}, vol.~50,
  no.~6, pp. 1417--1428, 2002.

\bibitem{Maravic2005sampling}
I.~Maravic and M.~Vetterli, ``Sampling and reconstruction of signals with
  finite rate of innovation in the presence of noise,'' \emph{IEEE Transactions
  on Signal Processing}, vol.~53, no.~8, pp. 2788--2805, 2005.

\bibitem{candes2014towards}
E.~Cand{\`e}s and C.~Fernandez-Granda, ``Towards a mathematical theory of
  super-resolution,'' \emph{Communications on pure and applied Mathematics},
  vol.~67, no.~6, pp. 906--956, 2014.

\bibitem{Candes2013super}
------, ``Super-resolution from noisy data,'' \emph{Journal of Fourier Analysis
  and Applications}, vol.~19, no.~6, pp. 1229--1254, 2013.

\bibitem{zukhovitskii1956approximation}
S.~Zukhovitskii, ``On approximation of real functions in the sense of pl
  chebyshev,'' \emph{Uspekhi Matematicheskikh Nauk}, vol.~11, no.~2, pp.
  125--159, 1956.

\bibitem{Fisher1975}
S.~Fisher and J.~Jerome, ``Spline solutions to $\ell_1$ extremal problems in
  one and several variables,'' \emph{Journal of Approximation Theory}, vol.~13,
  no.~1, pp. 73--83, 1975.

\bibitem{deCastro2012exact}
Y.~D. Castro and F.~Gamboa, ``Exact reconstruction using {B}eurling minimal
  extrapolation,'' \emph{Journal of Mathematical Analysis and applications},
  vol. 395, no.~1, pp. 336--354, 2012.

\bibitem{Bredies2013inverse}
K.~Bredies and H.~Pikkarainen, ``Inverse problems in spaces of measures,''
  \emph{ESAIM: Control, Optimisation and Calculus of Variations}, vol.~19,
  no.~01, pp. 190--218, 2013.

\bibitem{Duval2015exact}
V.~Duval and G.~Peyr{\'e}, ``Exact support recovery for sparse spikes
  deconvolution,'' \emph{Foundations of Computational Mathematics}, vol.~15,
  no.~5, pp. 1315--1355, 2015.

\bibitem{Azais2015Spike}
J.~Azais, Y.~D. Castro, and F.~Gamboa, ``Spike detection from inaccurate
  samplings,'' \emph{Applied and Computational Harmonic Analysis}, 2015.

\bibitem{FernandezGranda2016Super}
C.~Fernandez-Granda, ``Super-resolution of point sources via convex
  programming,'' \emph{Information and Inference: A Journal of the IMA}, 2016.

\bibitem{Chambolle2016geometric}
A.~Chambolle, V.~Duval, G.~Peyr{\'e}, and C.~Poon, ``Geometric properties of
  solutions to the total variation denoising problem,'' \emph{arXiv preprint
  arXiv:1602.00087}, 2016.

\bibitem{duval2017sparseI}
V.~Duval and G.~Peyr{\'e}, ``Sparse regularization on thin grids {I}: the
  {L}asso,'' \emph{Inverse Problems}, vol.~33, no.~5, p. 055008, 2017.

\bibitem{duval2017sparseII}
------, ``Sparse spikes super-resolution on thin grids {II}: the continuous
  basis pursuit,'' \emph{Inverse Problems}, vol.~33, no.~9, p. 095008, 2017.

\bibitem{Denoyelle2017support}
Q.~Denoyelle, V.~Duval, and G.~Peyr{\'e}, ``Support recovery for sparse
  super-resolution of positive measures,'' \emph{Journal of Fourier Analysis
  and Applications}, vol.~23, no.~5, pp. 1153--1194, 2017.

\bibitem{flinth2020linear}
A.~Flinth, F.~de~Gournay, and P.~Weiss, ``On the linear convergence rates of
  exchange and continuous methods for total variation minimization,''
  \emph{Mathematical Programming}, pp. 1--37, 2020.

\bibitem{chi2020harnessing}
Y.~Chi and M.~F.~D. Costa, ``Harnessing sparsity over the continuum: {A}tomic
  norm minimization for superresolution,'' \emph{{IEEE} Signal Processing
  Magazine}, vol.~37, no.~2, pp. 39--57, 2020.

\bibitem{garcia2020approximate}
H.~Garc{\'\i}a, C.~Hern{\'a}ndez, M.~Junca, and M.~Velasco, ``Approximate
  super-resolution of positive measures in all dimensions,'' \emph{Applied and
  Computational Harmonic Analysis}, 2020.

\bibitem{denoyelle2019sliding}
Q.~Denoyelle, V.~Duval, G.~Peyr{\'e}, and E.~Soubies, ``The sliding
  {F}rank-{W}olfe algorithm and its application to super-resolution
  microscopy,'' \emph{Inverse Problems}, 2019.

\bibitem{courbot2019sparse}
J.-B. Courbot, V.~Duval, and B.~Legras, ``Sparse analysis for mesoscale
  convective systems tracking,'' \emph{HAL archives ouvertes: hal-02010436ff},
  2019.

\bibitem{Schoenberg1973cardinal}
I.~Schoenberg, \emph{Cardinal {S}pline {I}nterpolation}.\hskip 1em plus 0.5em
  minus 0.4em\relax Philadelphia, PA: SIAM, 1973.

\bibitem{gupta2018continuous}
H.~Gupta, J.~Fageot, and M.~Unser, ``Continuous-domain solutions of linear
  inverse problems with {T}ikhonov \textit{vs.} generalized {TV}
  regularization,'' \emph{IEEE Transactions on Signal Processing}, vol.~66,
  no.~17, pp. 4670--4684, 2018.

\bibitem{flinth2019exact}
A.~Flinth and P.~Weiss, ``Exact solutions of infinite dimensional
  total-variation regularized problems,'' \emph{Information and Inference: A
  Journal of the {IMA}}, vol.~8, no.~3, pp. 407--443, 2019.

\bibitem{Debarre2019}
T.~Debarre, J.~Fageot, H.~Gupta, and M.~Unser, ``B-spline-based exact
  discretization of continuous-domain inverse problems with generalized {TV}
  regularization,'' \emph{IEEE Transactions on Information Theory}, 2019.

\bibitem{debarre2020sparsest}
T.~Debarre, Q.~Denoyelle, M.~Unser, and J.~Fageot, ``Sparsest continuous
  piecewise-linear representation of data,'' \emph{arXiv preprint
  arXiv:2003.10112}, 2020.

\bibitem{simeoni2020functionalpaper}
M.~Simeoni, ``Functional penalised basis pursuit on spheres,'' 2020.

\bibitem{simeoni2020functional}
------, ``Functional inverse problems on spheres: Theory, algorithms and
  applications,'' EPFL, Tech. Rep., 2020.

\bibitem{debarre2019hybrid}
T.~Debarre, S.~Aziznejad, and M.~Unser, ``Hybrid-spline dictionaries for
  continuous-domain inverse problems,'' \emph{IEEE Transactions on Signal
  Processing}, vol.~67, no.~22, pp. 5824--5836, 2019.

\bibitem{aziznejad2018l1}
S.~Aziznejad and M.~Unser, ``An $l_1$ representer theorem for multiple-kernel
  regression,'' \emph{arXiv preprint arXiv:1811.00836}, 2018.

\bibitem{bredies2018sparsity}
K.~Bredies and M.~Carioni, ``Sparsity of solutions for variational inverse
  problems with finite-dimensional data,'' \emph{arXiv preprint
  arXiv:1809.05045}, 2018.

\bibitem{boyer2019representer}
C.~Boyer, A.~Chambolle, Y.~D. Castro, V.~Duval, F.~de~Gournay, and P.~Weiss,
  ``On representer theorems and convex regularization,'' \emph{SIAM Journal on
  Optimization}, vol.~29, no.~2, pp. 1260--1281, 2019.

\bibitem{unser2019native}
M.~Unser and J.~Fageot, ``Native {B}anach spaces for splines and variational
  inverse problems,'' \emph{arXiv preprint arXiv:1904.10818}, 2019.

\bibitem{unser2019representer}
M.~Unser, ``A representer theorem for deep neural networks,'' \emph{Journal of
  Machine Learning Research}, vol.~20, no. 110, pp. 1--30, 2019.

\bibitem{aziznejad2020deep}
S.~Aziznejad, H.~Gupta, J.~Campos, and M.~Unser, ``Deep neural networks with
  trainable activations and controlled {L}ipschitz constant,'' \emph{arXiv
  preprint arXiv:2001.06263}, 2020.

\bibitem{novosadova2018image}
M.~Novosadov{\'a} and P.~Rajmic, ``Image edges resolved well when using an
  overcomplete piecewise-polynomial model,'' in \emph{2018 12th International
  Conference on Signal Processing and Communication Systems (ICSPCS)}.\hskip
  1em plus 0.5em minus 0.4em\relax IEEE, 2018, pp. 1--10.

\bibitem{simeoni2020cpgd}
M.~Simeoni, A.~Besson, P.~Hurley, and M.~Vetterli, ``{CPGD}: {C}adzow
  {P}lug-and-play {G}radient {D}escent for generalised {FRI},'' 2020.

\bibitem{Delgado2012ellipse}
R.~Delgado-Gonzalo, P.~Th{\'{e}}venaz, C.~Seelamantula, and M.~Unser, ``Snakes
  with an ellipse-reproducing property,'' \emph{{IEEE} Transactions on Image
  Processing}, vol.~21, no.~3, pp. 1258--1271, 2012.

\bibitem{Uhlmann2016hermite}
V.~Uhlmann, J.~Fageot, and M.~Unser, ``Hermite snakes with control of
  tangents,'' \emph{IEEE Transactions on Image Processing}, vol.~25, no.~6, pp.
  2803--2816, 2016.

\bibitem{light1992interpolation}
W.~Light and E.~Cheney, ``Interpolation by periodic radial basis functions,''
  \emph{Journal of mathematical analysis and applications}, vol. 168, no.~1,
  pp. 111--130, 1992.

\bibitem{Jacob2002sampling}
M.~Jacob, T.~Blu, and M.~Unser, ``Sampling of periodic signals: {A}
  quantitative error analysis,'' \emph{{IEEE} Transactions on Signal
  Processing}, vol.~50, no.~5, pp. 1153--1159, May 2002.

\bibitem{simeoni2019deepwave}
M.~Simeoni, S.~Kashani, P.~Hurley, and M.~Vetterli, ``Deepwave: A recurrent
  neural-network for real-time acoustic imaging,'' in \emph{Advances in Neural
  Information Processing Systems}, 2019, pp. 15\,274--15\,286.

\bibitem{pan2017frida}
H.~Pan, R.~Scheibler, E.~Bezzam, I.~Dokmani{\'c}, and M.~Vetterli, ``{FRIDA}:
  {FRI}-based {DOA} estimation for arbitrary array layouts,'' in \emph{2017
  IEEE International Conference on Acoustics, Speech and Signal Processing
  (ICASSP)}.\hskip 1em plus 0.5em minus 0.4em\relax IEEE, 2017, pp. 3186--3190.

\bibitem{krim1996two}
H.~Krim and M.~Viberg, ``Two decades of array signal processing research: the
  parametric approach,'' \emph{IEEE signal processing magazine}, vol.~13,
  no.~4, pp. 67--94, 1996.

\bibitem{Badou}
A.~Badoual, J.~Fageot, and M.~Unser, ``Periodic splines and {G}aussian
  processes for the resolution of linear inverse problems,'' \emph{IEEE
  Transactions on Signal Processing}, vol.~66, no.~22, pp. 6047--6061, 2018.

\bibitem{periodispline2020}
M.~Simeoni, ``Periodispline,''
  \url{https://github.com/matthieumeo/periodispline}, 2020.

\bibitem{Treves1967}
F.~Tr{\`e}ves, \emph{Topological {V}ector {S}paces, {D}istributions and
  {K}ernels}.\hskip 1em plus 0.5em minus 0.4em\relax Academic Press, New
  York-London, 1967.

\bibitem{Schwartz1966distributions}
L.~Schwartz, \emph{Th\'eorie des distributions}.\hskip 1em plus 0.5em minus
  0.4em\relax Hermann, 1966.

\bibitem{Unser2014sparse}
M.~Unser and P.~D. Tafti, \emph{An Introduction to Sparse Stochastic
  Processes}.\hskip 1em plus 0.5em minus 0.4em\relax Cambridge University
  Press, 2014.

\bibitem{campbell2009generalized}
S.~Campbell and C.~Meyer, \emph{Generalized inverses of linear
  transformations}.\hskip 1em plus 0.5em minus 0.4em\relax SIAM, 2009.

\bibitem{ben2003generalized}
A.~Ben-Israel and T.~Greville, \emph{Generalized inverses: theory and
  applications}.\hskip 1em plus 0.5em minus 0.4em\relax Springer Science \&
  Business Media, 2003, vol.~15.

\bibitem{freeden2008spherical}
W.~Freeden and M.~Schreiner, \emph{Spherical functions of mathematical
  geosciences: a scalar, vectorial, and tensorial setup}.\hskip 1em plus 0.5em
  minus 0.4em\relax Springer Science \& Business Media, 2008.

\bibitem{Unser1999splines}
M.~Unser, ``Splines: {A} perfect fit for signal and image processing,''
  \emph{IEEE Signal Processing Magazine}, vol.~16, no.~6, pp. 22--38, 1999.

\bibitem{Gray1984shaping}
J.~Gray, ``The shaping of the {R}iesz representation theorem: A chapter in the
  history of analysis,'' \emph{Archive for History of Exact Sciences}, vol.~31,
  no.~2, pp. 127--187, 1984.

\bibitem{rudin2006real}
W.~Rudin, \emph{Real and Complex Analysis}.\hskip 1em plus 0.5em minus
  0.4em\relax Tata McGraw-hill education, 2006.

\bibitem{Simon2003distributions}
B.~Simon, ``Distributions and their {H}ermite expansions,'' \emph{Journal of
  Mathematical Physics}, vol.~12, no.~1, pp. 140--148, 2003.

\bibitem{RudinFA}
W.~Rudin, \emph{Functional analysis}.\hskip 1em plus 0.5em minus 0.4em\relax
  International series in pure and applied mathematics, McGraw-Hill, Inc., New
  York, 1991.

\bibitem{pyffs}
S.~Kashani, ``py{FFS}: {A} {F}ast {F}ourier {S}eries library for {P}ython 3,''
  \url{https://github.com/imagingofthings/pyFFS}, 2020.

\bibitem{samko1993fractional}
S.~Samko, A.~Kilbas, and O.~Marichev, \emph{Fractional integrals and
  derivatives}.\hskip 1em plus 0.5em minus 0.4em\relax Gordon and Breach
  Science Publishers, Yverdon, 1993.

\bibitem{zygmund2002trigonometric}
A.~Zygmund, \emph{Trigonometric series}.\hskip 1em plus 0.5em minus 0.4em\relax
  Cambridge university press, 2002, vol.~1.

\bibitem{gia2012multiscale}
Q.~L. Gia, I.~Sloan, and H.~Wendland, ``Multiscale approximation for functions
  in arbitrary {S}obolev spaces by scaled radial basis functions on the unit
  sphere,'' \emph{Applied and Computational Harmonic Analysis}, vol.~32, no.~3,
  pp. 401--412, 2012.

\bibitem{rasmussen2003gaussian}
C.~Rasmussen, ``Gaussian processes in machine learning,'' in \emph{Summer
  School on Machine Learning}.\hskip 1em plus 0.5em minus 0.4em\relax Springer,
  2003, pp. 63--71.

\bibitem{abramowitz1948handbook}
M.~Abramowitz and I.~Stegun, \emph{Handbook of mathematical functions with
  formulas, graphs, and mathematical tables}.\hskip 1em plus 0.5em minus
  0.4em\relax US Government printing office, 1948, vol.~55.

\bibitem{wendland2004scattered}
H.~Wendland, \emph{Scattered data approximation}.\hskip 1em plus 0.5em minus
  0.4em\relax Cambridge university press, 2004, vol.~17.

\bibitem{zhu2012compactly}
S.~Zhu, ``Compactly supported radial basis functions: how and why?'' 2012.

\bibitem{hubbert2012closed}
S.~Hubbert, ``Closed form representations for a class of compactly supported
  radial basis functions,'' \emph{Advances in Computational Mathematics},
  vol.~36, no.~1, pp. 115--136, 2012.

\bibitem{Madych1990polyharmonic}
W.~Madych and S.~Nelson, ``Polyharmonic cardinal splines,'' \emph{Journal of
  Approximation Theory}, vol.~60, no.~2, pp. 141--156, 1990.

\bibitem{rabut1992elementarym}
C.~Rabut, ``Elementary $m$-harmonic cardinal {B}-splines,'' \emph{Numerical
  Algorithms}, vol.~2, no.~1, pp. 39--61, 1992.

\bibitem{VDV2005polyharmonic}
D.~{Van De Ville}, T.~Blu, and M.~Unser, ``Isotropic polyharmonic {B}-splines:
  {S}caling functions and wavelets,'' \emph{{IEEE} Transactions on Image
  Processing}, vol.~14, no.~11, pp. 1798--1813, 2005.

\bibitem{boas1966fourier}
R.~Boas, ``Fourier series with positive coefficients,'' \emph{Bulletin of the
  American Mathematical Society}, vol.~72, no.~5, pp. 863--865, 1966.

\bibitem{shapiro2011fourier}
V.~Shapiro, \emph{Fourier series in several variables with applications to
  partial differential equations}.\hskip 1em plus 0.5em minus 0.4em\relax CRC
  Press, 2011.

\bibitem{lu2008theory}
Y.~M. Lu and M.~Do, ``A theory for sampling signals from a union of
  subspaces,'' \emph{IEEE transactions on signal processing}, vol.~56, no.~6,
  pp. 2334--2345, 2008.

\bibitem{eldar2009robust}
Y.~Eldar and M.~Mishali, ``Robust recovery of signals from a structured union
  of subspaces,'' \emph{IEEE Transactions on Information Theory}, vol.~55,
  no.~11, pp. 5302--5316, 2009.

\end{thebibliography}

\end{document}